\documentclass{siamonline250211}
\usepackage{a4,latexsym,exscale,theorem,epsfig}
\usepackage{amssymb,psfrag,epsf,amsmath,verbatim,bbm,float}
\usepackage{pifont}
\usepackage{mathtools}
\usepackage{listings}
\usepackage{color} 
\usepackage{mathbbol}
\usepackage{enumerate}
\newtheorem{remark}[theorem]{Remark}

\usepackage{geometry}
\usepackage{accents}
\usepackage{algorithm}
\usepackage{algpseudocodex}
\counterwithout{algorithm}{section}

\usepackage{todonotes}
\usepackage{caption}
\usepackage{amsmath,scalerel,graphicx}
\newcommand{\medoplusscale}{1.03}
\newcommand{\medoplusraise}{0.02ex}
\newcommand{\medoplus}{%
  \mathop{\mathpalette\medoplusaux\relax}\displaylimits
}
\newcommand{\medoplusaux}[2]{%
  \vcenter{\hbox{%
    \raisebox{\medoplusraise}{%
      \scalebox{\medoplusscale}{$#1\scalerel*{\bigoplus}{\sum}$}%
    }%
  }}%
} 
\newcommand{\bigtimes}{%
  \mathop{\scalerel*{\times}{\prod}}\displaylimits
}
 
\begin{document}
\newcommand {\eps} {\varepsilon}
\newcommand {\Z} {\mathbbm{Z}}
\newcommand {\R} {\mathbbm{R}}
\newcommand {\T} {\mathbbm{T}}
\newcommand {\Q} {\mathbbm{Q}}
\newcommand {\N} {\mathbbm{N}}
\newcommand {\C} {\mathbbm{C}}
\newcommand {\I} {\mathbbm{I}}
\newcommand {\dist} {{\rm{dist}}}
\newcommand {\cl}{\mathrm{cl}}
\newcommand {\PP} {\mathbbm{P}}
\newcommand {\ang} {\measuredangle}
\newcommand {\e} {{\rm{e}}}
\newcommand {\rank} {{\rm{rank}}}
\newcommand {\disc} {{\rm{disc}}}
\newcommand {\cont} {{\rm{cont}}}
\newcommand {\step} {{\rm{step}}}
\newcommand {\RK} {{\rm{RK}}}
\newcommand {\diff} {{\rm{diff}}}
\newcommand {\Span} {{\mathrm{span}}}
\newcommand {\Skew} {{\mathrm{skew}}}
\newcommand {\card} {{\rm{card}}}
\newcommand {\ED} {\mathrm{ED}}
\newcommand {\cA} {\mathcal{A}}
\newcommand {\cO} {\mathcal{O}}
\newcommand {\cF} {\mathcal{F}}
\newcommand {\cC} {\mathcal{C}}
\newcommand {\cN} {\mathcal{N}}
\newcommand {\cV} {\mathcal{V}}
\newcommand {\cG} {\mathcal{G}}
\newcommand {\cB} {\mathcal{B}}
\newcommand {\cD} {\mathcal{D}}
\newcommand {\cP} {\mathcal{P}}
\newcommand {\cQ} {\mathcal{Q}}
\newcommand {\cW} {\mathcal{W}}
\newcommand {\cT} {\mathcal{T}}
\newcommand {\cI} {\mathcal{I}}
\newcommand {\cL} {\mathcal{L}}
\newcommand {\bi} {\boldsymbol{i}}
\newcommand {\Sn}[1] {\mathcal{S}^{#1}}
\newcommand {\range} {\mathcal{R}}
\newcommand {\kernel} {\mathcal{N}}
\newcommand{\one}{\mathbb{1}}
\renewcommand{\thefootnote}{\fnsymbol{footnote}}
\newcommand{\rle}{\rotatebox[origin=c]{-90}{$\le$}}
\newcommand{\rl}{\rotatebox[origin=c]{-90}{$<$}}
\newcommand{\rg}{\rotatebox[origin=c]{-90}{$=$}}


\title{\bf Angular maps for visualizing rotational dynamics}

\author{Wolf-J\"urgen Beyn\footnotemark[1]\qquad
  Thorsten H\"uls\footnotemark[1]
}
\footnotetext[1]{Department of Mathematics, Bielefeld University,  
33501 Bielefeld, Germany \\
\texttt{beyn@math.uni-bielefeld.de}, \texttt{huels@math.uni-bielefeld.de}}

\maketitle


\begin{abstract}
We develop and analyze angular maps as a numerical tool for
visualizing and detecting rotational dynamics in nonlinear,
nonautonomous dynamical systems in discrete and continuous time. An
angular map assigns to each initial point the angular spectrum of the
variational equation along the corresponding trajectory. This spectrum
describes the long-time average rotation of subspaces transported by
the linearized dynamics, measured by maximal principal
angles. Building on the theory of angular spectra, we develop
efficient numerical algorithms based on forward and backward subspace
iteration.
We prove that these algorithms asymptotically provide
  angular spectral values  for subspaces that are dominant in either
  forward or backward time. 
Applications to Hénon maps, a planar flow, and the Lorenz
system illustrate how angular maps reveal rotational features across
phase space. For continuous-time systems, we prove that suitably
rescaled angular spectra of exact time-step maps converge to the
continuous angular spectrum in the Hausdorff metric as the step size
tends to zero. For autonomous systems, we also justify a simplified
algorithm that uses successive trajectory points to approximate the
angular range associated with the flow direction. 
\end{abstract}

\begin{keywords}
Angular maps, angular spectrum, principal angles, nonautonomous
dynamical systems, Sacker\-Sell spectrum, numerical approximation. 
\end{keywords}

\begin{AMS}
  37C60, 37M25, 34D09, 65Q10. 
\end{AMS}

\renewcommand*{\thefootnote}{\arabic{footnote}}
\section{Introduction}
\label{sec0}

The main goal of this paper is to quantify the rotational behavior of
dynamical systems in discrete and continuous time. We first consider
discrete-time systems
\begin{equation}\label{nonlin}
x_{n+1} = F_n(x_n),\quad n\in \N_0,  \quad x_0 \in \R^d,
\end{equation}
where $F_n \in \cC^1(\R^d,\R^d)$. With  each  trajectory $(x_n)_{n\in\N_0}$ 
we associate the variational equation  
\begin{equation}\label{vari}
u_{n+1} = A_n u_n,\quad n\in\N_0,\quad \text{with}\quad A_n =
DF_n(x_n)\text{ invertible}
\end{equation}
and ask for its angular spectrum. In \cite{behu24b} we introduced the notion of an
angular spectrum for general linear nonautonomous dynamical systems. For every fixed $s < d$
the angular spectrum of dimension $s$ measures the long-time rotational behavior of all 
$s$-dimensional subspaces as they evolve under the linear system \eqref{vari}; see  Definition
\ref{def2:angspec} below.

Our aim is to visualize how the long-time rotational behavior
varies across phase space. Think of leaves floating in a current,
as illustrated in Figure \ref{idea}. We quantify the rotation that these
leaves experience on average along their trajectories by considering
the angular spectra associated with a multitude of initial values.

We make the dependence on the initial value explicit by writing
\begin{equation} \label{eq2:init}
  x_n =x_n(x_0), \quad A_n= A_n(x_0)=DF_n(x_n(x_0)), \quad n \in \N_0.
\end{equation}
Correspondingly, we let $\Phi(n,m;x_0)$ denote the solution operator of \eqref{vari}, defined by
$\Phi(n,m;x_0)=A_{n-1}(x_0) \cdot \ldots \cdot A_m(x_0)$ for $n >m$, $\Phi(n,n;x_0)=I_d$, and
$\Phi(n,m;x_0)=A_n^{-1}(x_0) \cdot \ldots \cdot A_{m-1}^{-1}(x_0)$ for $n <m$.

\begin{figure}[hbt]
\begin{center}
\includegraphics[width=0.90\textwidth]{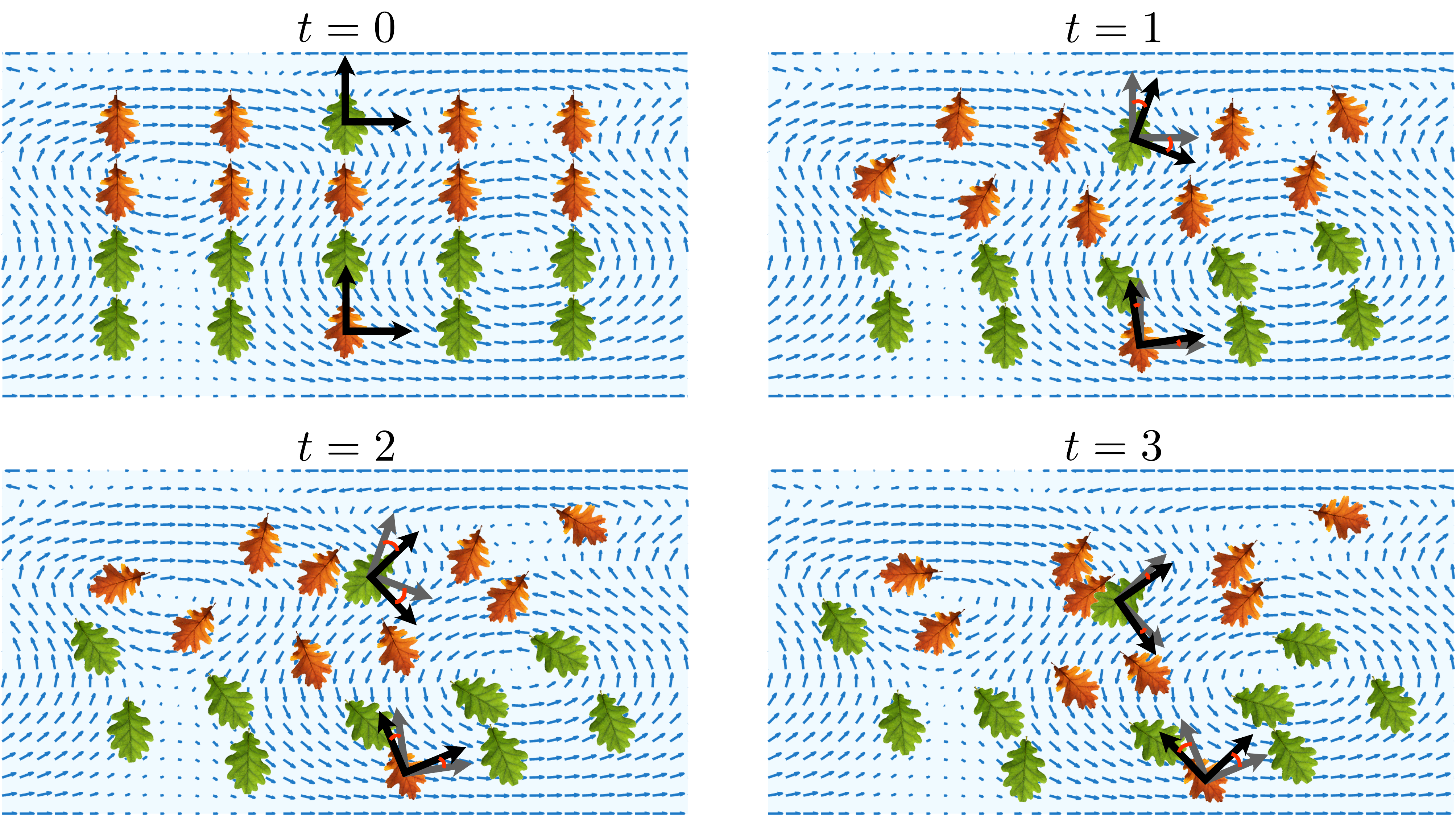}      
\end{center}
\caption{Leaves floating in a vector field. For two selected leaves,
  orthogonal coordinate frames indicate their orientations at the
  current time (black) and at the previous time (gray).  
  Red arcs indicate the rotation angles. \label{idea}}
\end{figure}

Our objects of study live in the Grassmannian 
\begin{equation*} \label{eq1.4}
  \mathcal{G}(s,d) = \{ V \subseteq \R^d: V \text{ is a subspace of
    dimension } s \},
\end{equation*}
which is a smooth manifold of dimension $s(d-s)$ and a metric space w.r.t.\ the
largest principal angle $\ang(V,W)\in [0,\frac{\pi}{2}]$ between two elements $V,W \in \cG(s,d)$. We recall
this notion in  Section \ref{sec1} and summarize some of its basic properties.

The solution operator of \eqref{vari}, \eqref{eq2:init} is then used to form the average of principal angles between iterates of an element $V\in \cG(s,d)$
\begin{equation} \label{def:alpha}
\alpha_s^{\disc}\begin{array}{rcl}
\N \times \cG(s,d) \times \R^d & \to & [0,\frac \pi 2]\\[1mm]
(n,V;x_0) & \mapsto &\displaystyle \frac 1n \sum_{j=1}^n\ang(\Phi(j-1,0;x_0)V,\Phi(j,0;x_0)V).
\end{array}
\end{equation}
These are the key quantities which determine the angular spectrum.
\begin{definition} \label{def2:angspec}
  For $ V \in \cG(s,d)$ and $x_0 \in \R^d$ the interval
  \begin{equation} \label{eq2:angrange}
    I_s^{\disc}(V;x_0) = \left[\varliminf_{n\to \infty} \alpha_s^{\disc}(n,V;x_0), \varlimsup_{n\to \infty}\alpha_s^{\disc}(n,V;x_0)\right]
  \end{equation}
  is called the $s$-dimensional \textbf{angular range} of $V$ for the system \eqref{vari}. The $s$-dimensional 
\textbf{angular spectrum} is defined by
\begin{equation*} \label{eq2:defdisc}
\Sigma_s^{\disc}(x_0) = \mathrm{cl}\Big[ \bigcup_{ V\in\cG(s,d)} I_s^{\disc}(V;x_0) \Big] .
\end{equation*}
The map that assigns to any $x_0$ in some domain $\cB \subseteq \R^d$ the closed set $\Sigma_s^{\disc}(x_0)$
is called the \textbf{angular map} of \eqref{vari} in $\cB$.
\end{definition}

Note that $\Sigma_s^{\disc}$ is called \textit{outer} angular spectrum in
\cite{behu24b}.  We omit the term \textit{outer} in this paper, since other 
variants of the angular spectrum  (see \cite[Section 5.3]{behu24b})
are more difficult to access theoretically and extremely costly to compute
numerically. The superscript ``$\disc$''
is used to distinguish the angular spectrum in discrete time from its
continuous time counterpart. 

The continuous time angular spectrum $\Sigma_s^{\cont}$ is discussed in Section \ref{sec4}. According to \cite{behu24a}
it is based  on the following quantities, defined for $T>0$  and  $V \in \cG(s,d)$:
\begin{equation} \label{eq1:defcont}
     \alpha_s^{\cont}(T,V;x_0)= \frac{1}{T} \int_0^T \|(I_d- P_{\Phi(t,0;x_0)V})A(t;x_0) P_{\Phi(t,0;x_0)V}\| dt.
\end{equation}
Here $\Phi(t,s;x_0)$ is the solution operator of a linear
continuous time system
\begin{equation*}\dot{u}=A(t;x_0)u, \quad u(s)=u_s,
\end{equation*}
which depends on the initial value $x_0$ of a nonlinear system; see \eqref{eq3:contnonlin}, \eqref{eq3:contvari} below.
Furthermore, the symbol $P_V$ in \eqref{eq1:defcont} denotes the orthogonal projector onto
the subspace $V \in \cG(s,d)$, and $\|\cdot\|$ is the spectral norm. The integrand measures the instantaneous rate of rotation of
the evolving subspace. The continuous angular spectrum
$\Sigma_s^{\cont}$ is then defined in an analogous fashion by considering the limit $T \to \infty$; see
Definition \ref{def3:contang}.

Our planar flow example illustrates how angular maps reveal differences
in the mean rotation of infinitesimal perturbations that are not
apparent from the mean rotation along the flow direction alone.
In particular, the latter may lie strictly between the minimal and
maximal angular values; see Figure \ref{Fstrom4}.
\medskip 

We develop theoretical and numerical foundations of angular maps
through the following contributions:
\begin{itemize}
\item[-] We extend the algorithm from \cite{behu24b} to compute
  angular maps. By color-coding
  the minimal and maximal spectral values within the region of
  interest, we obtain a global map of the rotational dynamics.
\item[-] We develop efficient algorithms based on forward and
  backward iteration of subspaces. These techniques apply to several
  examples and avoid the costly computation of the dichotomy spectrum
  and the corresponding spectral bundles.
For generic initial subspaces, we establish conditions under which the angular averages computed by forward iteration asymptotically agree with those associated with the fastest spectral subspaces.
\item[-] We establish a uniform error estimate between the continuous
  angular averages $\alpha_s^{\cont}(T,V;x_0)$ and their suitably
  scaled discrete counterparts $\alpha_s^{\step}(h,T,V;x_0)$,
  obtained from exact flow values at time steps of length $h$.
  This estimate yields convergence of the corresponding angular
  spectra to $\Sigma_s^{\cont}$ in the Hausdorff metric as
  $h \to 0$.
\end{itemize}
\medskip
  
The remainder of the paper is organized as follows.

In Section \ref{sec1} we introduce numerical algorithms based on
forward and backward subspace iteration and illustrate them using
various versions of the H\'enon map.

In Section \ref{AMdiscrete} we relate these algorithms to the reduction theory
developed in \cite{BeHu22}. This theory uses the exponential dichotomy
spectrum of the variational equation and its associated spectral
fibers to reduce the computation of angular spectra to a distinguished
family of subspaces, the so-called trace spaces.
It allows us to identify which parts of the angular spectrum are
captured by the algorithms of Section \ref{sec1}.
We then show how forward and backward iteration can be combined
to recover further spectral fibers and their associated angular
values. The reduction to trace spaces also provides the basis
for a general algorithm for computing angular maps, including
approaches for determining the smallest and largest angular values.
We illustrate these ideas further using the area-preserving
H\'enon map.

In Section \ref{sec4} we extend our approach to continuous-time
systems and illustrate it for a planar flow and the
three-dimensional Lorenz system. We establish the approximation
and convergence results stated above (Theorems
\ref{th3:unifapproxT} and \ref{th4:approxspechaus}).
Finally, we present and analyze a simplified algorithm for
autonomous systems that uses successive trajectory points to
approximate the angular range associated with the flow direction.
 

\section{Principal angles and algorithms for their numerical approximation}\label{sec1} 
The maximal principal angle  of two subspaces $V,W \in \cG(s,d)$ is denoted by $\ang(V,W)$.
It can be characterized as follows (see \cite[Prop.\ 2.3]{BeFrHu20})
\begin{equation*}\label{A1}
    \ang(V,W) =
    \max_{\substack{v\in V \\ v\neq 0} }\min_{\substack{w\in W \\ w \neq 0}} \ang(v,w)
  = \arccos\big(\min_{\substack{v\in V\\\|v\|=1}} \max_{\substack{
      w\in W\\\|w\|=1}} v^{\top} w\big).
\end{equation*}
Note that  $\ang(v,w) =\ang(\Span(v),\Span(w))= \arccos\left(\frac{|v^{\top}w|}{\|v\| \|w\|}\right) \in [0,\tfrac{\pi}{2}]$
measures the principal angle between the subspaces $\Span(v)$ and $\Span(w)$. It ignores the 
orientation of their spanning vectors $v$ and $w$ measured by $\angle(v,w)=\arccos\left(\frac{v^{\top}w}{\|v\| \|w\|}\right) \in [0,\pi]$. For general subspaces $V,W\in  \cG(s,d)$, let the columns of the
$d \times s$-matrices $V_B$ and $W_B$  form an orthonormal basis of $V$ resp.\ $W$; then
the maximal principal angle satisfies
$\ang(V,W)=\mathrm{arccos}(\sigma_s)$, where $\sigma_1\ge \sigma_2\ge
\cdots\ge \sigma_s\ge 0$ denote the singular values of
$V_B^{\top}W_B$; see \cite[Ch.\ 6.4.3]{GvL2013}. 
The maximal principal angle defines a metric on $\cG(s,d)$ and so does the expression
\begin{equation} \label{eq2:charsin}
  \| P_V - P_W\|= \sin(\ang(V,W)), \quad V,W \in \cG(s,d),
\end{equation}
where $\| \cdot \|$ denotes the spectral norm and $P_V,P_W$ are the orthogonal projectors onto
the subspaces $V$  and $W$, respectively  \cite[Ch.\ 6.4.3]{GvL2013}, \cite[Prop.\ 2.3]{behu24a}. A further equation relates the angle $\ang(V,W)$ to the distance of the unit ball
in one subspace from the other subspace; see \cite[Ch.II.4]{StSu1990}:
\begin{equation} \label{eq2:chardist}
  \sup\{ \dist(v,W): v \in V, \|v\|=1 \}=\sin(\ang(V,W)).
  \end{equation}
Similarly, one finds for the minimal angle $ \theta_{\min}(V,W)= \arccos(\sigma_1)$
between $V$ and $W$
\begin{equation} \label{eq2:charmin}
  \inf\{ \dist(v,W): v \in V, \|v\|=1 \}=\sin(\theta_{\min}(V,W)).
  \end{equation}
Sometimes, it is useful to work with orthogonal complements of subspaces rather than the subspaces
themselves (see Algorithm \ref{alg2adj}), using the following Lemma.
\begin{lemma} \label{lem1:adjoint}
  For $V, W \in \cG(s,d)$ and nonsingular $A \in \R^{d,d}$ the following relations hold:
  \begin{equation} \label{eq:perpequal}
    \ang(V,W) = \ang(V^{\perp}, W^{\perp}),
  \end{equation}
  \begin{equation*} \label{eq:itadj}
    \ang(V,AV) = \ang(V^{\perp},A^{-\top}V^{\perp}).
    \end{equation*}
\end{lemma}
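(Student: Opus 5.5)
The first relation follows directly from the projector characterization \eqref{eq2:charsin}. Since $P_{V^\perp}=I_d-P_V$ and $P_{W^\perp}=I_d-P_W$, we have
\begin{equation*}
  \|P_{V^\perp}-P_{W^\perp}\| = \|(I_d-P_V)-(I_d-P_W)\| = \|P_W-P_V\|.
\end{equation*}
Note that $V^\perp,W^\perp\in\cG(d-s,d)$, so \eqref{eq2:charsin} applies there as well. It gives $\sin(\ang(V^\perp,W^\perp))=\sin(\ang(V,W))$. Both angles lie in $[0,\frac{\pi}{2}]$, where $\sin$ is injective, so \eqref{eq:perpequal} follows. The degenerate cases $s=0$ or $s=d$ are trivial, since then both angles vanish.

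For the second relation the key step is the identity
\begin{equation*}
  (AV)^\perp = A^{-\top}V^\perp .
\end{equation*}
To prove the inclusion ``$\supseteq$'', take $w\in V^\perp$ and $v\in V$. Then $(A^{-\top}w)^\top Av = w^\top A^{-1}Av = w^\top v = 0$, so $A^{-\top}V^\perp\subseteq (AV)^\perp$. Because $A$ is nonsingular, $\dim A^{-\top}V^\perp = d-s = \dim (AV)^\perp$, and the inclusion is therefore an equality.

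Now apply \eqref{eq:perpequal} to the pair $V, AV\in\cG(s,d)$, which is valid since $\dim AV=s$. This gives
\begin{equation*}
  \ang(V,AV)=\ang(V^\perp,(AV)^\perp)=\ang(V^\perp,A^{-\top}V^\perp),
\end{equation*}
which is the second relation.

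No step here is a real obstacle. The only points needing care are that \eqref{eq2:charsin} must be used in the complementary Grassmannian $\cG(d-s,d)$, which is legitimate because that identity holds for subspaces of any common dimension, and that the dimension count relies on the invertibility of $A$.
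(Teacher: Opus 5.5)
Your proposal is correct and follows the paper's proof: \eqref{eq:perpequal} via $P_{V^\perp}=I_d-P_V$ and \eqref{eq2:charsin}, then the second relation by combining $(AV)^\perp=A^{-\top}V^\perp$ with \eqref{eq:perpequal}. Your verification of that identity by an inclusion plus a dimension count is also correct; the paper simply states it as an algebraic identity without proof.
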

\begin{proof}
  The equality \eqref{eq:perpequal} follows from \eqref{eq2:charsin} and the fact that
  $I-P_V = P_{V^{\perp}}$ is the projector onto the orthogonal complement:
  \begin{equation*}
    \sin(\ang(V,W))=\|P_V - P_W\|= \|(I_d - P_V)- (I_d-P_W)\| =\sin(\ang(V^{\perp},W^{\perp})).
  \end{equation*}
  Further, the algebraic identity $(AV)^{\perp}=A^{-\top}V^{\perp}$ and \eqref{eq:perpequal} yield
  \begin{equation*}
    \ang(V,AV)= \ang(V^{\perp},(AV)^{\perp})= \ang(V^{\perp},A^{-\top}V^{\perp}).
  \end{equation*}
  {  }
\end{proof}
 
\subsection{One-dimensional spectra in discrete time -- the fast subspace}
\label{sec1.1}
For the discrete time system \eqref{nonlin}, we propose a simple
algorithm to illustrate the rotational dynamics of several trajectories.
The area of interest is given for dimension $d=2$ resp.\ $d=3$ by
\[
\cB^2 = [a_-,a_+] \times [b_-,b_+] \quad \text{and} \quad
\cB^3 = [a_-,a_+] \times [b_-,b_+] \times [c_-,c_+].
\]
Let $L\in\N$ denote the resolution of the grid and define
\[
h_a = \frac {a_+-a_-}L,\quad 
h_b = \frac {b_+-b_-}L \quad \text{and in the case }d=3\quad 
h_c = \frac {c_+-c_-}L.
\]
For $d=2$ we define the $(i,j)$-th box for $i,j\in\{1,\dots,L\}$ as
\[
\cB_{i,j} = [a_-+(i-1)h_a,\ a_-+ih_a] \times [b_-+(j-1)h_b,\ b_-+j h_b].
\]
In Algorithm \ref{alg1b} we let  $N\in\N$ denotes the trajectory length
and assume that the Jacobians $DF_{n-1}$ are available.  
The algorithm aims to compute the average rotation angle
w.r.t.\ the fastest direction. 
\begin{algorithm}
\caption{Average rotation angle w.r.t.\ the fastest direction ($s=1$) 
 \label{alg1b}}
\textcolor{header1}{
\begin{algorithmic}[1]
\For {$i = 1,\dots,L$}
\For {$j = 1,\dots,L$}
\For {$k=1,\dots,L$ (if $d =3$)}
\State Choose the midpoint $x_0 \in \cB_{i,j,(k)}$ 
\State Choose $V_0 \in \R^d$, $\|V_0\|=1$ at random 
\For {$n = 1,\dots,N$}
\State $x_{n} = F_{n-1}(x_{n-1})$
\State $Y = DF_{n-1}(x_{n-1}) V_{n-1}$
\State $V_n = \frac Y {\|Y\|}$
\EndFor
\State $\displaystyle\Gamma_{i,j,(k)} = \frac 1N \sum_{\ell =1}^N \ang(V_{\ell -1},V_\ell)$
\If {$x_n \in \cB\ \forall n\in\{1,\dots,N\}$}
\State Color the box $\cB_{i,j,(k)}$ with the color
that corresponds to the angle $\Gamma_{i,j,(k)}$  
\EndIf
\EndFor
\EndFor
\EndFor
\end{algorithmic}}
\end{algorithm}

In Algorithm \ref{alg1b} we include the case $d=3$ and define the $(i,j,k)$-th box for $i,j,k\in\{1,\dots,L\}$ as
\[
\cB_{i,j,k} = [a_-+(i-1)h_a,\ a_-+ih_a] \times [b_-+(j-1)h_b,\ b_-+j h_b]
 \times [c_-+(k-1)h_c,\ c_-+k h_c].
\] 

We apply Algorithm \ref{alg1b} to the following two- and three-dimensional
  H\'enon systems:
\begin{equation}\label{henonmaps}
f_2:\begin{array}{rcl}
\R^2 & \to & \R^2\\
\begin{pmatrix}x_1\\x_2 \end{pmatrix}
& \mapsto &
\begin{pmatrix}
1+x_2-1.4 x_1^2\\ 0.3 x_1
\end{pmatrix}
\end{array}
\quad \text{and} \quad
f_3 :\begin{array}{rcl} 
\R^3&\to & \R^3\\
x & \mapsto & 
\begin{pmatrix} 1+ x_3 - 1.4 x_1^2\\x_1+x_3\\0.2 x_1 + 0.1 x_2  
\end{pmatrix}.
\end{array}
\end{equation}
For the two-dimensional H\'enon map, we consider the box $\cB =
[-1.5, 1.5]^2$ with resolution $L = 10^3$.  We use 
 $N = 30$   iterates for the left diagram in Figure \ref{Fh1} and $N = 10^4$
for the right diagram. 
In addition, we plot the H\'enon attractor in black to indicate its position within the domain of colored points
that do not leave the prescribed box. 

\begin{figure}[hbt]
\begin{center}
\includegraphics[width=0.99\textwidth]{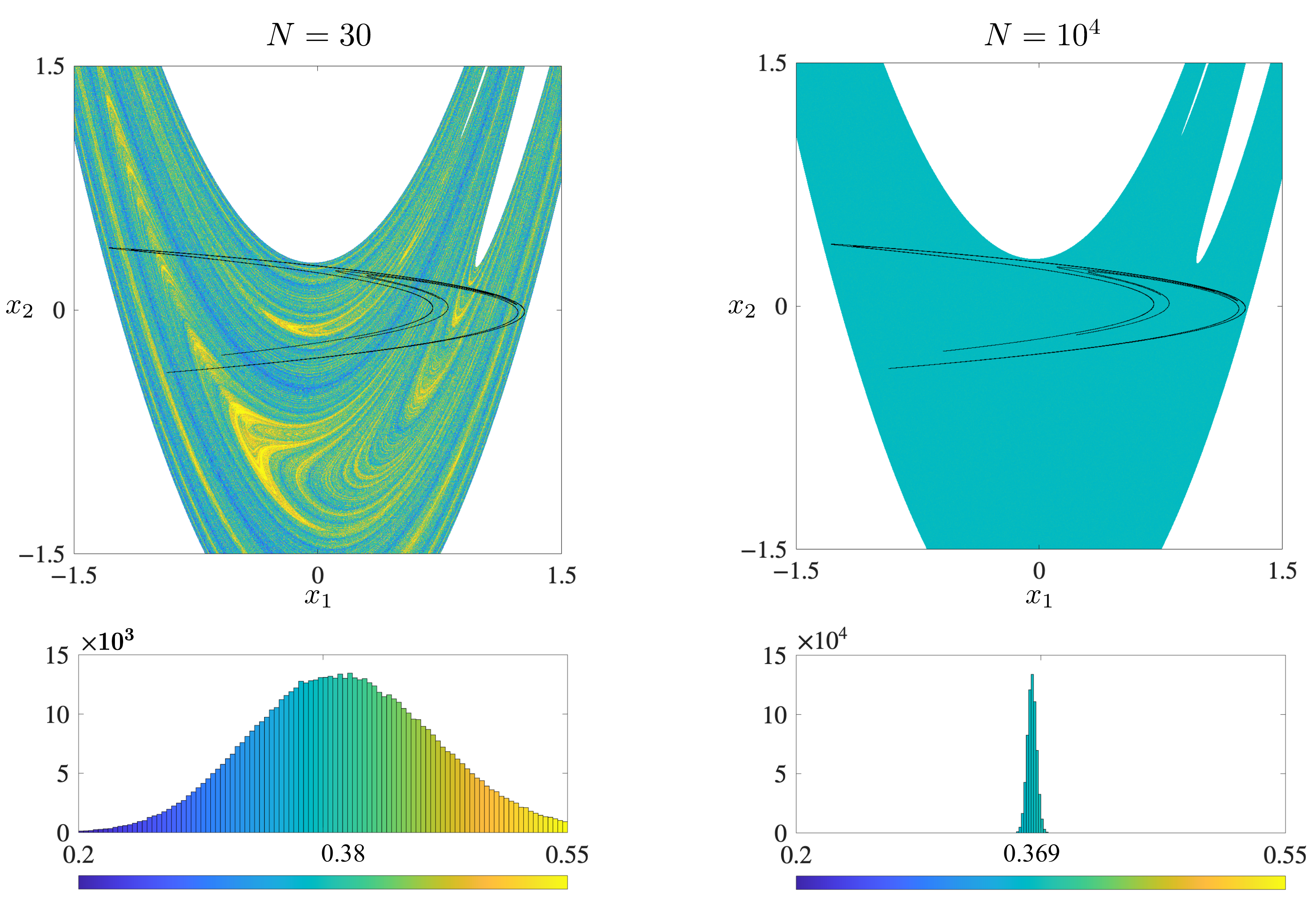}      
\end{center}
\caption{Algorithm \ref{alg1b} applied to $f_2$  (see \eqref{henonmaps})
  with $N = 30$ (left) and $N=10^4$ (right). \label{Fh1}} 
\end{figure}
In the second row of the figure we provide a histogram of approximate
angular spectral  data for those initial values among the $L^2=10^6$
grid points whose trajectories remain in $\cB$.  
The color coding of points in phase space is synchronized with their corresponding
position in the histogram. 
We observe a marked difference between the results for short
($N=30$) and long trajectories ($N=10^4$). For long trajectories, angular spectral values show
a very narrow distribution and almost no dependence on the initial points in the domain of
attraction. However,  for short trajectories, we observe a broad distribution function with high
values (yellow) and low values (blue) originating from different parts of the domain of attraction.  

For the three-dimensional H\'enon map, we choose the box $\cB =
[-2, 2]\times [-3,3] \times [-3,3]$ with resolution $L = 300$.  We use
 $N = 30$ steps for the left diagrams in Figure \ref{Fh2} and $N = 10^4$
for the right diagrams. These diagrams look very similar to the $2D$ case: a broad distribution around a specific angular value
related to a  structured domain of attraction and a very narrow distribution  centered at
$\alpha_1^{\disc}=0.836$ almost uniformly for all initial values in
the domain of attraction. In Section \ref{AMdiscrete} below we show
where this angular spectral value is located in the
angular spectrum.

\begin{figure}[H]
\begin{center}
\includegraphics[width=0.99\textwidth]{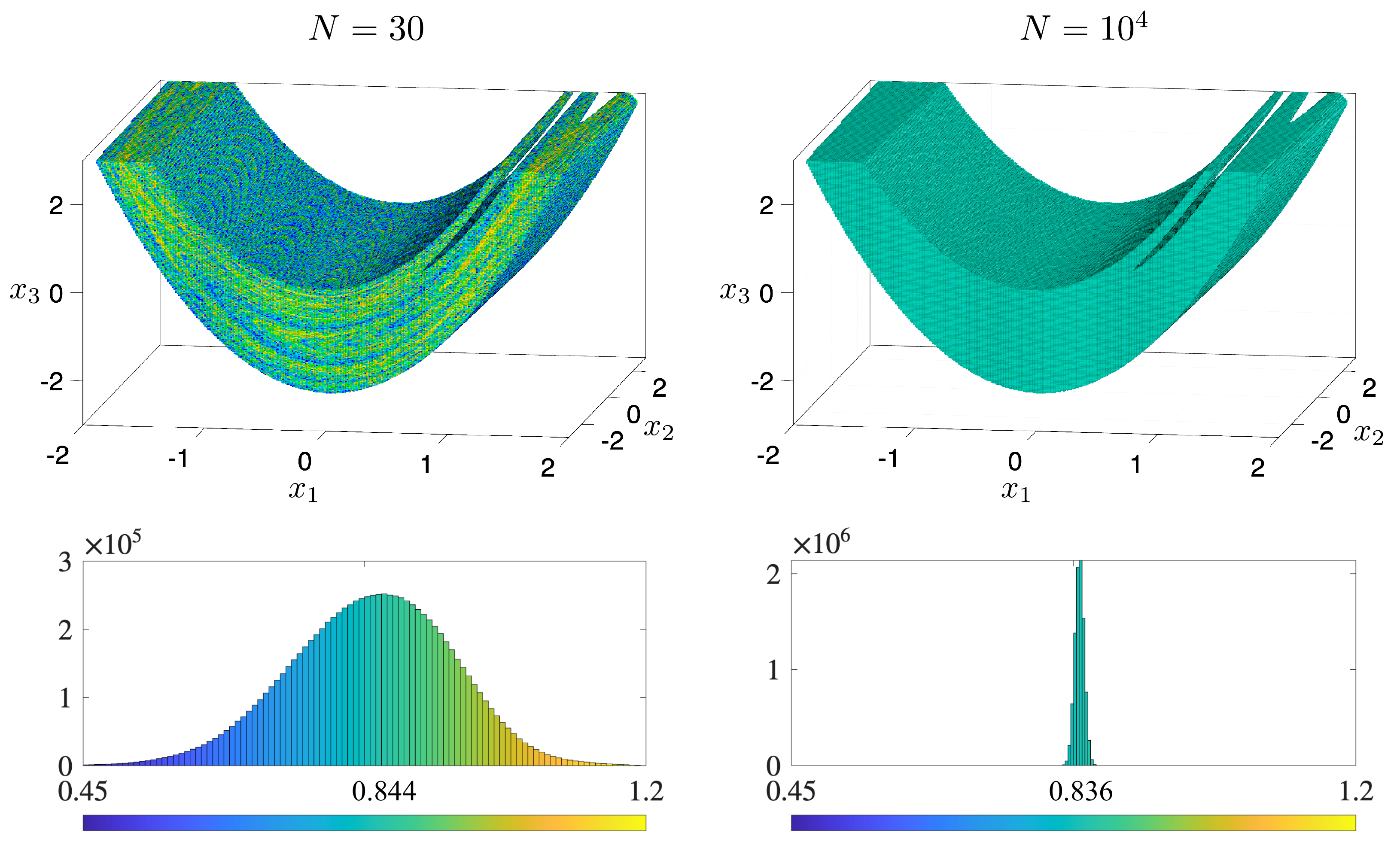}      
\end{center}
\caption{Algorithm \ref{alg1b} applied to $f_3$  (see \eqref{henonmaps})
  with $N = 30$ (left) and $N=10^4$ (right). \label{Fh2}} 
\end{figure}
  
\subsection{ A $QR$-like algorithm for multidimensional angular maps}
\label{sec1.2}
We generalize Algorithm \ref{alg1b}
 to the general case $s\ge 1$. For this step we use $QR$ factorization analogous to the transition from the power
 method to subspace iteration for eigenvalue problems; see \cite[Ch.\
 7.3, 7.6]{GvL2013}.
\begin{algorithm}
\caption{$QR$-like algorithm replacing lines 5--10 of
  Algorithm~\ref{alg1b} ($s\ge 1$) \label{alg2}}
\textcolor{header1}{
\begin{algorithmic}[1] 
\makeatletter
\setcounter{ALG@line}{4}
\makeatother
\State Choose $V_0 \in \R^{d,s}$, $V_0^{\top}V_0=I_s$ at random
\For {$n = 1,\dots,N$}
\State $x_{n} = F_{n-1}(x_{n-1})$
\State $Y = DF_{n-1}(x_{n-1})V_{n-1}$
\State $(V_n,R_n) = QR(Y)$
\EndFor
\State $\displaystyle\Gamma_{i,j,(k)} = \frac 1N \sum_{\ell =1}^N \ang(V_{\ell -1},V_\ell)$
\end{algorithmic}}
\end{algorithm}
In line 9 we use the thin $QR$ factorization for which $V_n\in \R^{d,s}$,
$V_n^{\top}V_n =I_s$ and $R_n \in \R^{s,s}$ is upper triangular; see  \cite[Theorem 5.2.3]{GvL2013}.
For large $N$, the value $\Gamma_{i,j,(k)}$ approximates some value in the angular range
$I_s^{\disc}(V_0;x_0)$. In Section \ref{AMdiscrete} we will discuss how the specific value
selected by the algorithm  relates to the invariant foliation induced by
the dichotomy spectrum of the linearized equation \eqref{vari}.
 
We apply Algorithm \ref{alg2} to the three-dimensional
H\'enon system $f_3$  (see \eqref{henonmaps}) in case $s=2$, using the
setup from Figure \ref{Fh2}. We note that computing the
QR factorizations is considerably more time-consuming than the
normalization performed in line 9 of Algorithm~\ref{alg1b}.
We present the histograms for  $N = 30$ and $N = 10^4$ in
Figure \ref{Fh3}, which provide the most relevant information.
\begin{figure}[H]
\begin{center}
\includegraphics[width=0.90\textwidth]{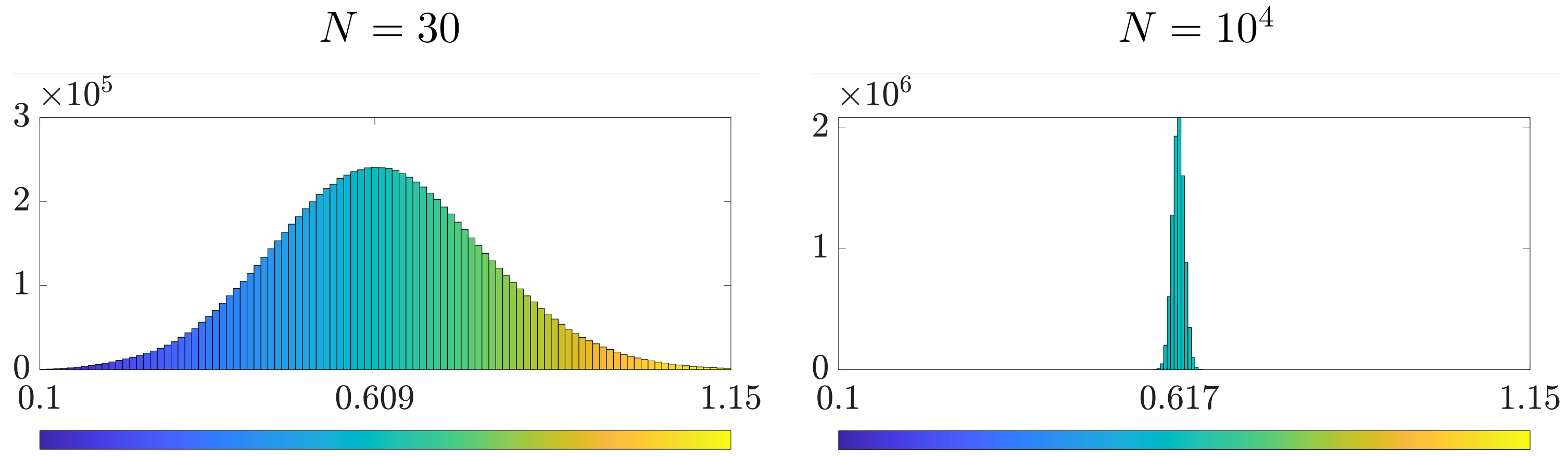}      
\end{center}
\caption{Algorithm \ref{alg2} applied to $f_3$ (see \eqref{henonmaps})
 for $s=2$ with $N = 30$ (left) and $N=10^4$ (right). \label{Fh3}} 
\end{figure}
The orthogonal complements of elements $V\in \cG(s,d)$ satisfy $V^{\perp} \in \cG(d-s,d)$.
Therefore, if $s> \tfrac{d}{2}$, it is more efficient to iterate orthogonal complements
of smaller dimension and employ  Lemma \ref{lem1:adjoint} for computing angles.
This leads to the following algorithm:
\begin{algorithm}
\caption{$QR$-like algorithm for orthogonal complements, \\ replacing line 5--10 of
  Algorithm~\ref{alg1b} ($s\ge 1$) \label{alg2adj}}
\textcolor{header1}{
\begin{algorithmic}[1] 
\makeatletter
\setcounter{ALG@line}{4}
\makeatother
\State Choose $U_0 \in \R^{d,d-s}$, $U_0^{\top}U_0=I_{d-s}$ at random
\For {$n = 1,\dots,N$}
\State $x_{n} = F_{n-1}(x_{n-1})$
\State Solve $ DF_{n-1}(x_{n-1})^{\top}Z=U_{n-1}$
\State $(U_n,S_n) = QR(Z)$
\EndFor
\State $\displaystyle\Gamma_{i,j,(k)} = \frac 1N \sum_{\ell =1}^N \ang(U_{\ell -1},U_\ell)$
\end{algorithmic}}
\end{algorithm}

If we choose $U_0\in \R^{d,d-s}$ in Step $5$ such that $U_0^{\top}V_0=0$ and  $U_0^{\top}U_0=I_{d-s}$  holds for
the initial space $V_0$ from Algorithm \ref{alg2}, then both algorithms yield the same value, i.e.,
\begin{equation*} \label{eq:UV}
  \sum_{\ell=1}^N \ang(V_{\ell-1},V_{\ell})=\sum_{\ell =1}^N \ang(U_{\ell -1},U_\ell).
\end{equation*}
  Indeed, by induction we obtain
$U_n^{\top}V_n=0$ from the invertibility of $R_n,S_n$ and
\begin{equation*}
  S_n^{\top}U_n^{\top}V_nR_n=U_{n-1}^{\top} DF_{n-1}(x_{n-1})^{-1}DF_{n-1}(x_{n-1})V_{n-1}=
  U_{n-1}^{\top}V_{n-1} =0.
\end{equation*}
The equality 
 $\ang(V_{\ell-1},V_{\ell})=\ang(U_{\ell-1},U_{\ell})$, $\ell=1,\ldots,N$, then follows from \eqref{eq:perpequal}.

For the second angular value of the $3D$-H\'{e}non map we have $s=2>\tfrac{3}{2}$.
Hence, it is more efficient to iterate normal vectors and normalize
them at each step rather than
iterate bases of two-dimensional subspaces and perform a $QR$ decomposition at each step.
This reduces the computation time by a factor of $7$. If, in addition,
an explicit formula for the inverse Jacobian is used, the computation
time is reduced by another factor of $3$.

\subsection{Application to inverted maps}
\label{sec1.3}
Until now, we have computed angular maps only through forward iteration.
Forward subspace iteration captures angular values associated with the
fastest directions, while backward iteration captures those associated
with the slowest directions. 
For this purpose we consider an
invertible map $f: \R^d \to \R^d$ and the autonomous system
\begin{equation} \label{inverseit}
  y_{n+1}=f^{-1}(y_n), \quad  n \in \N_0, \quad y_0 \in \R^d.
  \end{equation}
The associated  variational equation \eqref{vari} takes the form 
\begin{equation} \label{invvar}
u_{n+1} = (Df(y_{n+1}))^{-1} u_n,\quad n\in \N_0,
\end{equation}
since $ Df^{-1}(y_{n})=(Df(f^{-1}(y_{n})))^{-1}=(Df(y_{n+1}))^{-1} $. 

Numerically, we avoid iterating the inverse map in \eqref{inverseit}
and instead use the orbit segment $(x_n)_{n\in\{0,\dots,N\}}$ which
has  already been computed forward in time.   
For the initial value $y_0 = x_N$, we get the solution of
\eqref{inverseit} by $y_n = x_{N-n}$ for $n\in\{0,\dots,N\}$. 
Further setting $v_k=u_{N-k}$, $k \in \{0,\ldots,N\}$, the finite time  variational equation \eqref{invvar} takes
the form 
\[
v_{k-1} = (Df(x_{k-1}))^{-1} v_k,\quad k =N,\ldots,1.
\]
Figure \ref{Fh4} shows the histograms of the forward and backward
angular maps for the two-dimensional H\'enon map $f_2$ 
in a single diagram. We choose  $N = 1000$ with the
remaining parameters  as described above.

\begin{figure}[hbt]
\begin{center}
\includegraphics[width=0.60\textwidth]{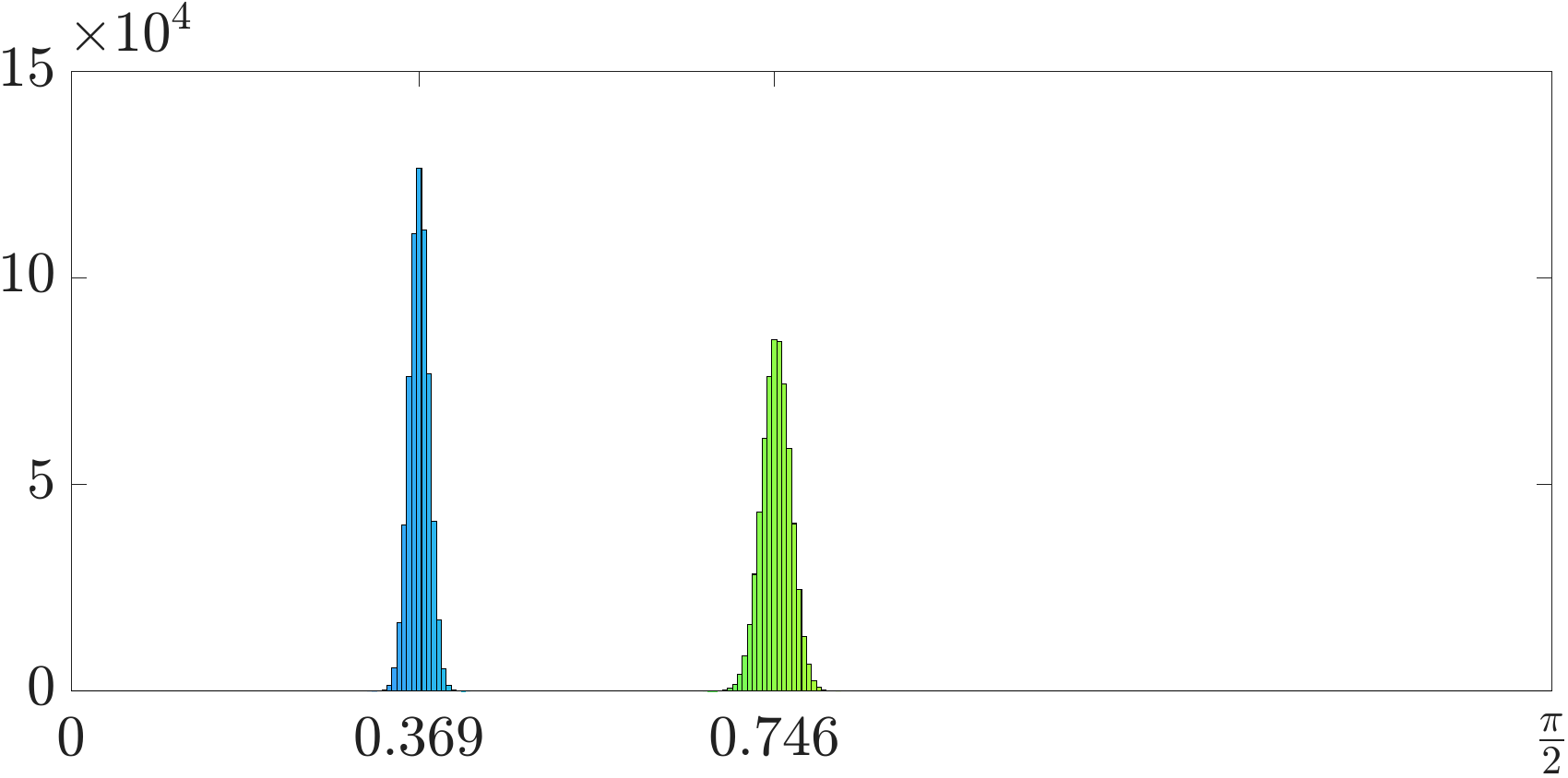}      
\end{center}
\caption{Histogram of angular maps for $f_2$ with $N=1000$. The left
  peak corresponds to the forward computation, and the right peak to
  the backward computation. \label{Fh4}}  
\end{figure}

We repeat this computation in Figure \ref{Fh5} for the
three-dimensional H\'enon map.

\begin{figure}[hbt]
\begin{center}
\begin{tabular}{cc}
$s = 1$ & $s=2$\\[-1mm]
\includegraphics[width=0.47\textwidth]{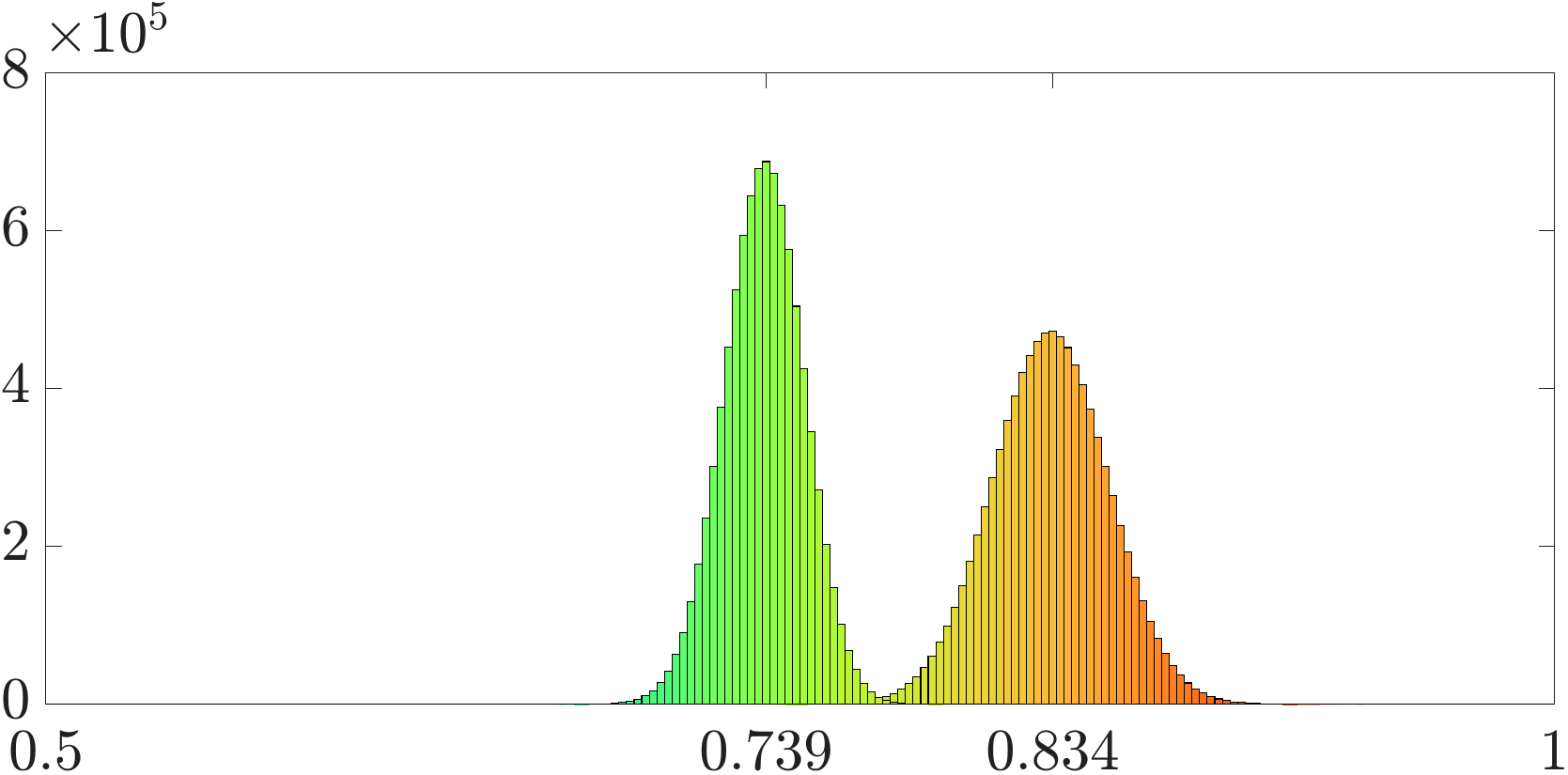}&
\includegraphics[width=0.47\textwidth]{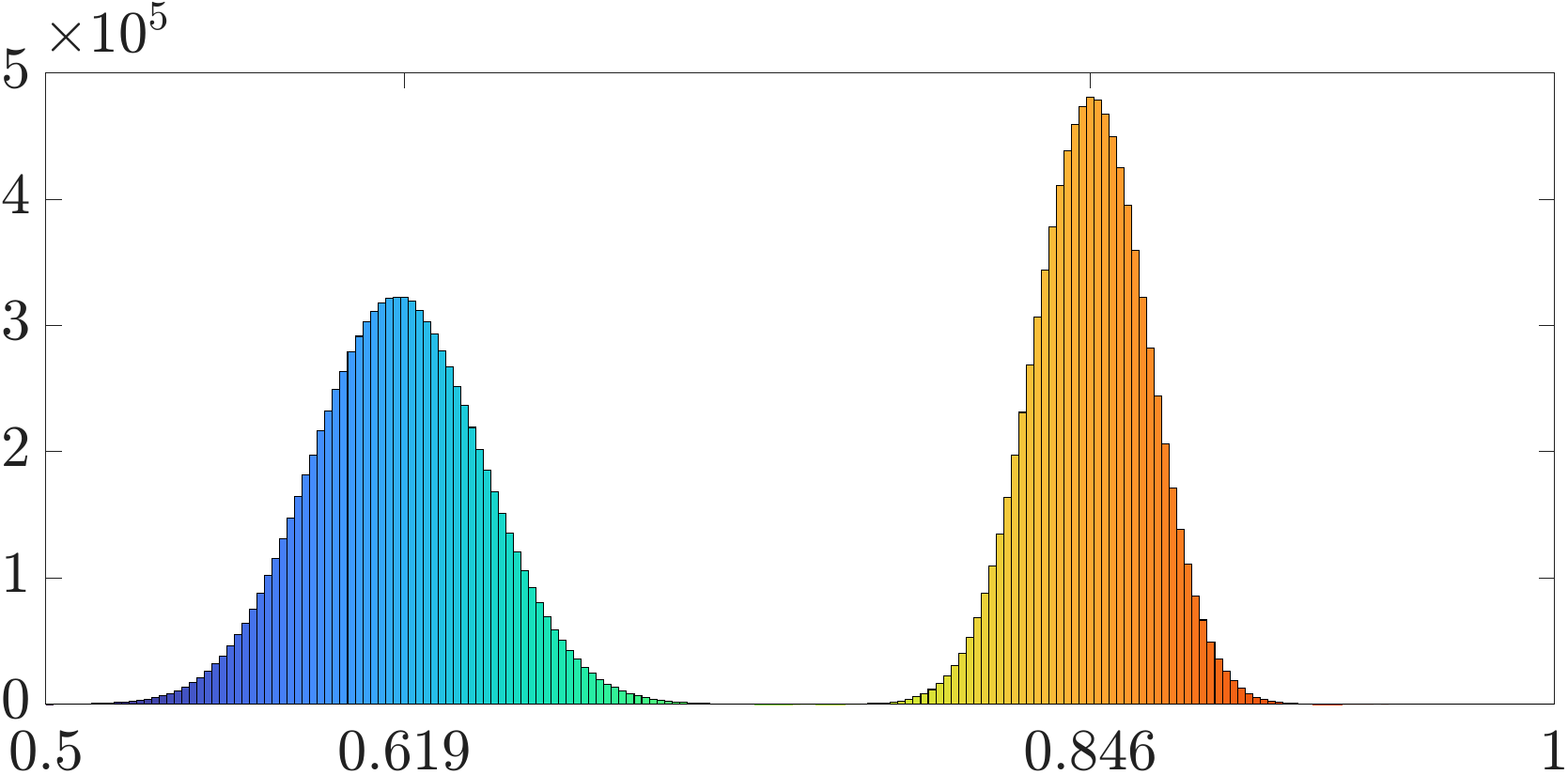}
\end{tabular}      
\end{center}
\caption{Histogram of angular maps for $f_3$ with $N=1000$. 
For $s=1$ the left peak corresponds to the backward computation, and
the right peak to the forward computation. For $s=2$ the left peak
corresponds to the forward computation, and the right peak to the
backward computation. \label{Fh5}}  
\end{figure}

In the next section, we explain why forward and backward iteration can
capture the entire angular spectrum for the two-dimensional H\'enon
map. Our numerical results suggest that, for typical trajectories
approaching the attractor, this spectrum consists of the two values
approximated by the peaks in Figure \ref{Fh4}, with no apparent dependence
on the initial point. Exceptional trajectories, such as fixed points
and periodic orbits on the attractor, may have different angular
spectra. For the three-dimensional H\'enon map, however, forward and
backward iteration alone miss one of the three spectral values. 

\section{Angular maps in discrete time and a general algorithm}\label{AMdiscrete}
In this section, we show that the algorithms discussed so far capture only a specific
part of the angular spectrum and thus of the angular map. This follows from a
key result of 
the reduction theory developed in \cite{BeHu22}, which is based on the exponential dichotomy
spectrum of the variational equation \eqref{vari}.
As a consequence, we will use the methods from \cite{BeHu22} to develop an algorithm
capable of computing the entire angular spectrum. However, the computational cost
of this algorithm is orders of magnitude higher than the simple algorithms considered so far.

\subsection{Angular range and dichotomy spectrum}
\label{sec3.1}
The notion of an ``angular range'' is justified by the following Lemma taken from
\cite[Lemma 2.6]{behu24b}.
\begin{lemma} \label{lem2:accu}
  The angular range  $ I_s^{\disc}(V;x_0)$ is the set of  accumulation points of  the sequence
  $(\alpha_s^{\disc}(n,V;x_0))_{n \in \N}$.
\end{lemma}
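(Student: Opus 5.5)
The plan is to use only two properties of the sequence $a_n=\alpha_s^{\disc}(n,V;x_0)$: it is bounded, since $a_n\in[0,\frac{\pi}{2}]$, and its increments tend to zero. Call the set of accumulation points $\mathcal{A}$. The inclusion $\mathcal{A}\subseteq I_s^{\disc}(V;x_0)$ holds for any real sequence, because every accumulation point lies between $\varliminf a_n$ and $\varlimsup a_n$. Conversely, $\varliminf a_n$ and $\varlimsup a_n$ are themselves accumulation points, since the sequence is bounded and both limits are therefore finite. The real work is to show that every value strictly between them is also an accumulation point. This is the standard fact that the limit set of a bounded real sequence with $a_{n+1}-a_n\to 0$ is a closed interval.

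First I will establish $|a_{n+1}-a_n|\to 0$. Writing $\theta_j=\ang(\Phi(j-1,0;x_0)V,\Phi(j,0;x_0)V)\in[0,\frac{\pi}{2}]$, we have
\begin{equation*}
a_{n+1}-a_n=\frac{1}{n+1}\sum_{j=1}^{n+1}\theta_j-\frac{1}{n}\sum_{j=1}^{n}\theta_j=\frac{\theta_{n+1}-a_n}{n+1}.
\end{equation*}
Both $\theta_{n+1}$ and $a_n$ lie in $[0,\frac{\pi}{2}]$, so $|a_{n+1}-a_n|\le \frac{\pi}{2(n+1)}\to 0$.

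Next, fix $c$ with $\underline{a}:=\varliminf a_n<c<\varlimsup a_n=:\overline{a}$ and $\eps>0$. Since $c-\underline{a}>0$ and $\overline{a}-c>0$, there are infinitely many indices $n$ with $a_n<c$ and infinitely many with $a_n>c$. Choose $N$ such that $|a_{n+1}-a_n|<\eps$ for all $n\ge N$. For any $m\ge N$ with $a_m<c$, there is a later index $k>m$ with $a_k>c$. Let $k$ be the first such index. Then $a_{k-1}\le c<a_k$, and hence $|a_k-c|<\eps$. Repeating this argument starting after $k$ gives infinitely many indices $k$ with $|a_k-c|<\eps$. Since $\eps>0$ was arbitrary, $c$ is an accumulation point.

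Combining the steps, $\mathcal{A}$ contains both endpoints and every interior point of $I_s^{\disc}(V;x_0)$, and the two inclusions give $\mathcal{A}=I_s^{\disc}(V;x_0)$. If $\underline{a}=\overline{a}$, the statement reduces to the convergence of $a_n$ and is trivial. I expect no real obstacle here. The only point needing care is the discrete intermediate-value step, which rests entirely on the vanishing increments.
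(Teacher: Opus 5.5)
Your proof is correct, and the discrete intermediate-value step driven by the bound $|a_{n+1}-a_n|\le \frac{\pi}{2(n+1)}$ is the standard argument for this lemma. The paper only cites the discrete lemma, but its proof of the continuous analogue, Lemma \ref{lem4:accucont}, has the same structure: the endpoints are immediate, and interior values are reached between times where the average lies below and above the target, with continuity in $T$ playing the role of your vanishing increments.
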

 
We use the reduction theory from \cite[Section 3]{BeHu22} to analyze which part
of the angular spectrum is obtained by applying the $QR$-like Algorithm \ref{alg2}
to a generic subspace $V \in \cG(s,d)$.

Let us assume that the matrices $A_n$ in \eqref{vari} are uniformly bounded and have
uniformly bounded inverses. According to the  spectral theorem \cite[Theorem 5.4]{as01}
for exponential dichotomies (see also \cite{p10}),
there exists an index $\varkappa\le d$ and numbers 
\begin{equation*} \label{eq3:dichspec}
 0 = \sigma_{\varkappa+1}^+ < \sigma_{\varkappa}^{-} \le \sigma_{\varkappa}^{+} < \sigma_{\varkappa-1}^{-}\le \sigma_{\varkappa-1}^{+}< \ldots < \sigma_1^{-} \le \sigma_1^{+}
\end{equation*}
and projectors $P_n^i$, $n \in \N$, $i=1,\ldots, \varkappa+1$, with the following properties:
\begin{itemize}
  \item[(i)]
For all $0 \le m \le n$, $i=1,\ldots,\varkappa$ there holds
\begin{equation}  \label{eq3:projprop}
  \begin{aligned}
    \Phi(n,m)P_m^i&=P_n^i\Phi(n,m), \qquad \; \; \; \text{(invariance)} \\
    \{0\}= \range(P_n^{\varkappa+1})& \subseteq \range(P_n^{\varkappa}) \subseteq  \cdots \subseteq \range(P_n^1)= \R^d,\;\;\;  \text{(flag of ranges)} \\
    \R^d = \kernel(P_n^{\varkappa+1})& \supseteq \kernel(P_n^{\varkappa}) \supseteq \cdots \supseteq
      \kernel(P_n^1) = \{0\}. \; \; \; \text{(flag of null spaces)}
     \end{aligned}
\end{equation}
\item[(ii)]
For each $i=1,\ldots,\varkappa$ and for every $0< \eps < \sigma_i^--
\sigma_{i+1}^+ $ there exists a constant $K_i(\eps)$ such that for all
$0 \le m \le n$ there holds
\begin{equation}\label{eq3:dichprop}
  \begin{aligned}
    \|\Phi(n,m)P_m^{i+1} \| & \le K_i(\eps) (\sigma_{i+1}^{+}+\eps)^{n-m},
                          \quad  \text{(forward dichotomy estimate)}\\ 
    \|\Phi(m,n)(I_d-P_n^{i+1}) \| & \le K_i(\eps) (\sigma_{i}^{-}-\eps)^{m-n}. \quad 
    \text{(backward dichotomy estimate)}
  \end{aligned}
\end{equation}
\end{itemize}
The set
\begin{equation*}
  \Sigma_{\mathrm{ED}}= \bigcup_{i=1}^{\varkappa} [\sigma_i^{-},\sigma_i^{+}]
\end{equation*}
is called the dichotomy spectrum of \eqref{vari}.
\begin{remark} \label{bem3:cautionepsilon} In \cite[Section 3]{BeHu22} we derived
  the dichotomy estimates from the exponential dichotomy of a scaled operator. However, our conclusion that  the estimates \eqref{eq3:dichprop} hold with $\eps=0$ is  only
  true under additional assumptions. In general, the estimates are satisfied 
  for sufficiently small $\eps>0$  with rates and
  constants depending on $\eps$.
  With this modification,  \cite[Theorem 3.4]{BeHu22} applies  to 
  $\eps=\frac{1}{4}\min_{i=1,\ldots,\varkappa}(\sigma_i^-- \sigma_{i+1}^+)$ and then
  yields the rate $r<1$ used in \eqref{eq3:Vundtrace} below.
  \end{remark}
Due to the nested subspaces \eqref{eq3:projprop},  the whole space can be decomposed into invariant fibers
\begin{equation} \label{eq3:fibers} \textstyle
  \R^d = \medoplus_{i=1}^{\varkappa} \cW_n^i, \quad \cW_n^i= \range(P_n^i) \cap \kernel(P_n^{i+1}),\quad
  i=1,\ldots,\varkappa.
\end{equation}
Note that we have $\cW_n^i=\range(\cP_n^i)$ for the
fiber projectors $\cP_n^i=(I_d-P_n^{i+1})P_n^i=P_n^i - P_n^{i+1}= P_n^i(I-P_n^{i+1})$.
Because of  the exponential estimates \eqref{eq3:dichprop}, we call $\cW_n^{\varkappa}$ the slowest fiber and $\cW_n^1$ the
fastest fiber.
Further, according to \cite[(3.16)]{BeHu22} we  associate to every $V \in \cG(s,d)$
its trace spaces
\begin{equation} \label{eq3:deftrace} \textstyle
  \cT_n(V) = \medoplus_{i=1}^{\varkappa}W_n^i(V), \quad \text{with}\quad W_n^i(V)= (I_d- P_n^{i+1})(\range(P_n^i)\cap V),
\end{equation}
which satisfy $W_n^i(V) \subseteq \cW_n^i$ as well as $\sum_{i=1}^{\varkappa} \dim(W_n^i(V))=s$.

The main result \cite[Theorem 3.4]{BeHu22} states that the angle between the iterates
of $V$ and its trace spaces $\cT_n(\Phi(n,0)V)=\Phi(n,0)\cT_0(V)$ converges to zero at an exponential rate. More precisely, for
every $V \in \cG(s,d)$ there exists a constant $C(V)$ such that for all $n \ge 0$
\begin{equation} \label{eq3:Vundtrace}
  \ang(\Phi(n,0)V,\Phi(n,0)\cT_0(V)) \le C(V)r^n,  \quad
  r= \frac{1}{2}\Big(1+\max_{i=1,\ldots,\varkappa}
  \frac{\sigma_{i+1}^+}{\sigma_i^-}\Big)  <1.
\end{equation}
As noted above, the  rate $r$ is determined by the smallest gap between the spectral intervals.
As a consequence of \eqref{eq3:Vundtrace}, we find that the angular range of $V$ coincides
with the angular range of its trace space at $n=0$ \cite[Theorem 2.17]{behu24b}:
\begin{equation} \label{eq3:angrangeequal}
  I_s^{\disc}(V) = I_s^{\disc}(\cT_0(V)),  \quad V \in \cG(s,d).
\end{equation}
Hence, one can reduce the computation of  the angular spectrum to the set of trace spaces:
\begin{equation}\label{trace} 
  \begin{aligned} 
    \Sigma_s^{\disc}&= \mathrm{cl}\Big[ \bigcup_{V\in \cD_0(s,d)} I_s^{\disc}(V) \Big], \\  
    \cD_0(s,d)&= \Big\{\textstyle \medoplus_{i=1}^{\varkappa} W^i:  W^i \text{ subspace of } \cW_0^i\ (i=1,\ldots,\varkappa),\ \sum_{i=1}^{\varkappa} \dim W^i= s \Big\}.
    \end{aligned}
\end{equation}
In this way, the computational effort can be reduced significantly. In particular, if all fibers
are one-dimensional, the whole angular spectrum can be computed from the angular
range of  at most $ d \choose s$ trace spaces.
This case frequently occurs with our applications below.

Further, if we choose a random initial subspace as in Algorithm \ref{alg2}, then we expect the algorithm
to converge to the angular range of a trace space composed of the fastest
fibers in \eqref{eq3:fibers}. To see this, determine the unique index $j\in \{1,\ldots,\varkappa\}$
with
\begin{equation} \label{eq3:determinej} \textstyle
  \sum_{i=1}^{j-1} d_i < s \le \sum_{i=1}^{j} d_i= d - \sum_{i=j+1}^{\varkappa}
  d_i, \quad d_i=\dim(\cW_0^i),\quad i=1,\ldots,\varkappa.
\end{equation}
From the inequality $s + \sum_{i=j+1}^{\varkappa}d_i \le d$ we conclude
that the two subspaces
$V$ and $\medoplus_{i=j+1}^{\varkappa}\cW_0^i$ almost surely have a trivial intersection.
Then $W_0^i(V)=\{0\}$ holds for $i=j+1,\ldots,\varkappa$ in \eqref{eq3:deftrace}  and
\begin{equation} \label{eq3:fasttrace} \textstyle
  \cT_0(V) =\medoplus_{i=1}^j W_0^i(V) \subseteq \medoplus_{i=1}^j \cW_0^i.
\end{equation}
Therefore,   the algorithm will generate the angular range of a subspace that belongs to a sum of
the fastest fibers. In case of equality on the righthand side of \eqref{eq3:determinej},
we can say even more:
\begin{proposition} \label{cor3:anglefast}
  Let $0<s <d$ and $j \in \{1,\ldots,\varkappa\}$ be given. Then there exist constants
  $C>0$ and $0\le r<1$  such that for all $V\in \cG(s,d)$ with 
  \begin{equation} \label{eq3:sexact}
    s=\textstyle  \sum_{i=1}^j \dim(\cW_0^i), \quad V \cap \medoplus_{i=j+1}^{\varkappa}\cW_0^i = \{0\},
  \end{equation}
  the following estimates hold for the iterates of $V$ and the
  angular sums from \eqref{def:alpha} 
  \begin{equation} \label{eq3:alphanest}
    \begin{aligned}
       \ang(\Phi(n,0)V,\textstyle \medoplus_{i=1}^j \cW_n^i) & \le
             \frac{C}{\theta_{\min}}r^n, \quad \forall n \ge 1, \\
       |\alpha^{\disc}_s(n,V)- \alpha_s^{\disc}(n,\textstyle \medoplus_{i=1}^j \cW_0^i)| &\le
       \frac{C}{n \theta_{\min}}, \quad \forall n \ge 1.
       \end{aligned}
  \end{equation}
  Here $\theta_{\min}=\theta_{\min}(V, \medoplus_{i=j+1}^{\varkappa}\cW_0^i) $ denotes the minimal principal angle between the  subspaces $V$ and $\medoplus_{i=j+1}^{\varkappa}\cW_0^i$.
  Further, the equality $I_s^{\disc}(V)= I_s^{\disc}(\medoplus_{i=1}^j \cW_0^i)$ holds.
\end{proposition}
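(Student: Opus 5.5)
Set $F_n=\medoplus_{i=1}^j \cW_n^i=\range(I_d-P_n^{j+1})$ and $S_n=\medoplus_{i=j+1}^{\varkappa}\cW_n^i=\range(P_n^{j+1})$. Both families are invariant: $\Phi(n,0)F_0=F_n$ and $\Phi(n,0)S_0=S_n$. Since $\dim F_0=s$ and $V\cap S_0=\{0\}$, the projector $Q=I_d-P_0^{j+1}$ maps $V$ bijectively onto $F_0$. Hence every $w\in F_0$ has a unique preimage $v=(Q|_V)^{-1}w\in V$, and $v=w+z$ with $z=P_0^{j+1}v\in S_0$. The trace space is then $\cT_0(V)=F_0$, which follows directly from \eqref{eq3:deftrace}. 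But we want the explicit constant $C/\theta_{\min}$, so instead of quoting \eqref{eq3:Vundtrace} we argue directly with the dichotomy estimates \eqref{eq3:dichprop}.

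\textbf{Step 1 (bound on the graph map).} For $v\in V$ with $\|v\|=1$ we have $\|Qv\|\ge c\,\theta_{\min}$ for some $c>0$, where $\theta_{\min}=\theta_{\min}(V,S_0)$. Indeed, by \eqref{eq2:charmin}, $\dist(v,S_0)\ge\sin\theta_{\min}\ge \tfrac{2}{\pi}\theta_{\min}$. Moreover $Qv=v-P_0^{j+1}v$ with $P_0^{j+1}v\in S_0$, so $\|Qv\|\ge \dist(v,S_0)$. This gives $\|(Q|_V)^{-1}\|\le \frac{\pi}{2\theta_{\min}}$ and therefore $\|z\|\le \|P_0^{j+1}\|\,\frac{\pi}{2\theta_{\min}}\|w\|$.

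\textbf{Step 2 (angle estimate).} We bound $\ang(\Phi(n,0)V,F_n)$ using \eqref{eq2:chardist} with the roles chosen so that we measure distances from $F_n$ to $\Phi(n,0)V$. Both spaces have dimension $s$, so the angle is symmetric. Take $u=\Phi(n,0)w\in F_n$ with $\|u\|=1$. Then $\Phi(n,0)v=u+\Phi(n,0)z$ lies in $\Phi(n,0)V$, so
\[
\dist(u,\Phi(n,0)V)\le\|\Phi(n,0)P_0^{j+1}z\|\le K(\sigma_{j+1}^++\eps)^n\|z\|.
\]
The backward dichotomy estimate gives
\[
\|w\|=\|\Phi(0,n)(I_d-P_n^{j+1})u\|\le K(\sigma_j^--\eps)^{-n}.
\]
Combining with Step 1 yields
\[
\sin\ang(\Phi(n,0)V,F_n)\le \frac{C'}{\theta_{\min}}\,r^n,\qquad r=\frac{\sigma_{j+1}^++\eps}{\sigma_j^--\eps}<1 .
\]
Here $\eps$ is chosen as in Remark~\ref{bem3:cautionepsilon}, and $\sup_n\|P_n^{j+1}\|<\infty$ follows from the dichotomy estimates with $m=n$. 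Since $\ang\le \tfrac{\pi}{2}\sin\ang$, the first estimate in \eqref{eq3:alphanest} follows. The constant is uniform in $V$, as required.

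\textbf{Step 3 (angular sums).} The maximal principal angle is a metric on $\cG(s,d)$, so by the triangle inequality
\[
|\ang(\Phi(\ell-1,0)V,\Phi(\ell,0)V)-\ang(F_{\ell-1},F_\ell)|\le \ang(\Phi(\ell-1,0)V,F_{\ell-1})+\ang(\Phi(\ell,0)V,F_\ell).
\]
The right-hand side is at most $\frac{C}{\theta_{\min}}(r^{\ell-1}+r^\ell)$. For $\ell=1$ we use the trivial bound $\ang\le\pi/2\le \frac{\pi^2}{4\theta_{\min}}$, because $\theta_{\min}\le\pi/2$. Summing over $\ell=1,\dots,n$ gives a bounded geometric series, and dividing by $n$ yields the second estimate in \eqref{eq3:alphanest}. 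This difference tends to $0$, so the two sequences $\alpha_s^{\disc}(n,\cdot)$ have the same accumulation points. Lemma~\ref{lem2:accu} then gives equality of the angular ranges.

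\textbf{Main obstacle.} The delicate point is Step 2: the angle must be measured in the direction where the dichotomy estimates apply, namely from $F_n$ into $\Phi(n,0)V$. This requires that the equal dimensions make the maximal principal angle symmetric, which \eqref{eq2:charsin} guarantees. Step 1 also needs care so that the dependence on $V$ enters only through $\theta_{\min}$, with $\|P_0^{j+1}\|$ absorbed into the constant.
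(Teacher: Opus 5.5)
Your proof is correct and follows essentially the paper's route. You write $V$ as a graph over $\kernel(P_0^{j+1})$, with the graph part controlled by $1/\theta_{\min}$; you then bound the $\range(P_0^{j+1})$-component with the forward dichotomy estimate and the preimage with the backward one, and finish with \eqref{eq2:chardist} and the triangle inequality for $\ang$. The only cosmetic differences are that you bound $\|P_0^{j+1}v\|$ through $\|P_0^{j+1}\|$ rather than through $\|v\|+\|(I_d-P_0^{j+1})v\|$, and that you derive $I_s^{\disc}(V)=I_s^{\disc}(\medoplus_{i=1}^j\cW_0^i)$ directly from the averaged estimate and Lemma~\ref{lem2:accu} instead of citing \eqref{eq3:angrangeequal}.
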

\begin{remark} \label{rem2:Cspecific} Note that we have $\theta_{\min}>0$ by formula
  \eqref{eq2:charmin} and assumption \eqref{eq3:sexact}. The estimate \eqref{eq3:alphanest}
  refines the result of \cite[Theorem 3.4]{BeHu22} by specifying the dependence of the constant $C(V)$ on $V$ in the asymptotic estimate \eqref{eq3:Vundtrace}.
  The closer $V$ lies to the sum of the $\varkappa-j$ slow  fibers, the longer
  it takes for the asymptotic regime to dominate. 
    \end{remark}
\begin{proof}
  From the equality of dimensions in \eqref{eq3:sexact} we find equality in
  \eqref{eq3:fasttrace} as well, i.e.
  \begin{equation} \label{eq3:fspaces} \textstyle
    \cT_0(V) = \medoplus_{i=1}^j \cW_0^i= \kernel(P_0^{j+1}), \quad
    \medoplus_{i=j+1}^{\varkappa} \cW_0^i= \range(P_0^{j+1}). 
  \end{equation}
 Moreover, assumption \eqref{eq3:sexact} assures that the map
  \begin{equation*}
    (I_d-P_0^{j+1})_{|V}: V \to \kernel(P_0^{j+1})
  \end{equation*}
  is injective, hence invertible by the equality of dimensions. Setting
  \begin{equation*}
      L_0= P_0^{j+1} \left((I_d-P_0^{j+1})_{|V}\right)^{-1}: \kernel(P_0^{j+1}) \to \range(P_0^{j+1}),
          \end{equation*}
  we can write $V$ as a graph over $\kernel(P_0^{j+1})$, i.e.
  \begin{equation} \label{eq3:decomp}
    v=(I_d-P_0^{j+1})v + L_0 (I_d-P_0^{j+1})v, \quad \forall v \in V.
  \end{equation}
  The characterization \eqref{eq2:charmin} of $\theta_{\min}$ and \eqref{eq3:decomp}
  then show for all $v \in V$
  \begin{equation*}
    \sin(\theta_{\min}) \|v\| \le \dist(v,\range(P_0^{j+1})) \le \|(I_d-P_0^{j+1})v\|, 
  \end{equation*}
  and
  \begin{equation} \label{eq3:estL0}
    \| L_0(I_d-P_0^{j+1})v\| \le \|v\|+\|(I_d-P_0^{j+1})v\|  \le (1+\frac{1}{\sin(\theta_{\min})})
    \|(I_d-P_0^{j+1})v\|.
    \end{equation}
  We iterate the decomposition \eqref{eq3:decomp}
  \begin{equation} \label{eq:ndecomp}
    \begin{aligned}
      \Phi(n,0)v& = \Phi(n,0)(I_d-P_0^{j+1})v\\
      &+\Phi(n,0)P_0^{j+1}L_0(I_d-P_0^{j+1}) \Phi(0,n)(I_d-P_n^{j+1}) \Phi(n,0)
      (I_d-P_0^{j+1})v.
    \end{aligned}
    \end{equation}
 Let $w=\Phi(n,0) (I_d-P_0^{j+1})v\in \kernel(P_n^{j+1})$. Using \eqref{eq3:estL0} and the dichotomy estimates
  \eqref{eq3:dichprop} for $\Phi(n,0)P_0^{j+1}$  and $\Phi(0,n)(I_d-P_n^{j+1})$, we obtain
  \begin{equation} \label{eq3:estPhinvw}
    \|\Phi(n,0)v-w\| \le K_{j}(\eps)^2\left(\frac{\sigma^+_{j+1}+\varepsilon}{\sigma_j^- -\varepsilon}
    \right)^n\left(1+\frac{1}{\sin(\theta_{\min})}\right)\|w\|.
    \end{equation}
    Further, by the invariance of fibers we have
  \begin{equation*}
    \Phi(n,0)\kernel(P_0^{j+1})=\Phi(n,0) \textstyle \medoplus_{i=1}^j \cW_0^i=\medoplus_{i=1}^j \cW_n^i
    =\kernel(P_n^{j+1}).
  \end{equation*}
  For every
  $w \in\kernel(P_n^{j+1})$ there exists a unique element $v \in V$ with $w=\Phi(n,0)(I_d-P_0^{j+1})v$.
  Thus the characterization \eqref{eq2:chardist} and \eqref{eq3:estPhinvw} lead to
  \begin{equation*}
    \begin{aligned}
      \sin( \ang(\kernel(P_n^{j+1}),\Phi(n,0)V)) &= \sup\{\dist(w,\Phi(n,0)V): w \in \kernel(P_n^{j+1}), \|w\|=1\}\\
        & \le K_{j}(\eps)^2\left(\frac{\sigma^+_{j+1}+\varepsilon}{\sigma_j^- -\varepsilon}
        \right)^n\left(1+\frac{1}{\sin(\theta_{\min})}\right).
    \end{aligned}
  \end{equation*}
  Using $\frac{2}{\pi}x \le \sin(x)\le 1$ for $0 \le x \le \frac{\pi}{2}$ and setting
  $\varepsilon=\tfrac{1}{4}(\sigma_j^- - \sigma_{j+1}^+)$, we obtain the first estimate in \eqref{eq3:alphanest} for the constants
  \begin{equation*}
    r= \tfrac{1}{2} \Big(1 + \frac{\sigma_{j+1}^+}{\sigma_j^-}\Big),
    \quad C_0= \tfrac{1}{2}\pi^2 K_{j}(\eps)^2.
      \end{equation*}
   With the  triangle inequality 
  for ``$\ang$'' we then find
  \begin{equation*}
    \begin{aligned}
      & |\alpha^{\disc}(n,V)- \alpha^{\disc}(n, \textstyle \medoplus_{i=1}^j \cW_0^i)|\\
      &\le \frac{1}{n} \sum_{k=1}^n
      \left|    \ang(\Phi(k-1,0)V,\Phi(k,0)V) 
      -\ang(\Phi(k-1,0)\cT_0(V),\Phi(k,0)\cT_0(V)) \right| \\
      & \le \frac{1}{n} \sum_{k=1}^n\    \ang(\Phi(k-1,0)V,\Phi(k-1,0)\cT_0(V)) + 
      \ang(\Phi(k,0)V,\Phi(k,0)\cT_0(V)) \\
      & \le \frac{2C_0}{n\theta_{\min}} \sum_{k=0}^n r^k \le \frac{2 C_0}{n(1-r)\theta_{\min}}.
    \end{aligned}
  \end{equation*}
  Finally, the equality of angular ranges follows from \eqref{eq3:angrangeequal} and \eqref{eq3:fspaces}.
\end{proof}

In the particular case of one-dimensional fibers we can choose $j=s$ in \eqref{eq3:sexact}. Since
a random choice of $V$ almost always satisfies the second condition in \eqref{eq3:sexact},
we find that Algorithm \ref{alg2} almost surely yields the angular range of the subspace spanned by
the $s$ fastest fibers.

On the other hand, if $s < \sum_{i=1}^j d_i$ holds in \eqref{eq3:determinej}, then the reduced set 
of trace spaces
\begin{equation} \label{eq3:redtrace} \textstyle
    \cD_0^j(s,d)= \Big\{ \medoplus_{i=1}^{j} W^i\in \cD_0(s,d):  \sum_{i=1}^{j} \dim W^i= s \Big\}
    \end{equation}
   is a continuum. Therefore,  we must invoke some continuous optimization method for 
finding the infimum and the supremum of  the angular spectrum.

In Section \ref{sec1.3} we used the inverse map to find the angular spectrum associated
to the sum of the $s$ slowest fibers. We derive  a suitable analog of
 Proposition \ref{cor3:anglefast}.
Since we cannot invert from infinity, we invert the solution operator at any finite $N \in \N$
and take care that the resulting estimates are independent of $N$. The inverse solution operator
is 
\begin{equation*}
  \widetilde{\Phi}(n,m) = \Phi(N-n,N-m), \quad 0 \le n,m \le N.
\end{equation*}
Note that we do not make the dependence on $N$ explicit.
It is not difficult to see that the properties \eqref{eq3:projprop}, \eqref{eq3:dichprop}
and the decomposition \eqref{eq3:fibers}
hold for $\widetilde{\Phi}$ with constants independent of $N$ and the
following data ($n=0,\ldots,N$):
\begin{equation*} \label{eq:invdata}
\begin{aligned}
  \tilde{\sigma}_i^{\pm}= \left(\sigma_{\varkappa-i+1}^{\mp}\right)^{-1},
  \quad \widetilde{\cW}_n^i = \cW_{N-n}^{\varkappa-i+1}, \quad i=1,\ldots,\varkappa, \\
 \quad \tilde{P}_n^i= I_d - P_{N-n}^{\varkappa -i+2}, \quad i=1,\ldots,\varkappa+1.
\end{aligned}
\end{equation*}
Given a subspace $\widetilde{V} \in \cG(s,d)$ at time $N$, the trace spaces cannot be computed in
terms of the forward iteration, but are defined as follows
\begin{equation*}
  \widetilde{\cT}_{N-n}(\widetilde{V})= \textstyle \medoplus_{i=1}^{\varkappa} P_{N-n}^{\varkappa-i+1}\left(\kernel(P_{N-n}^{\varkappa-i+2})\cap \widetilde{V}\right), \quad n=0,\ldots,N.
\end{equation*}

Using the dichotomy estimates for the reversed system, one can show the following result.

\begin{proposition} \label{cor3:angleslow}
Let $0<s<d$. There exist constants $0<r<1$ and $C>0$,
depending only on the dichotomy estimates, such that the following
holds.
For every $N\in\N$, $j\in\{1,\ldots,\varkappa\}$,
and $\widetilde V\in\cG(s,d)$ satisfying
\begin{equation}\label{eq3:sinvert}
 s=\sum_{i=j}^{\varkappa}d_i,
 \qquad
 \widetilde V\cap
 \textstyle\medoplus_{i=1}^{j-1}\cW_N^i=\{0\},
\end{equation}
define $\tilde{\theta}_{\min}=\theta_{\min}( \widetilde V, \medoplus_{i=1}^{j-1}\cW_N^i)$.
Then the estimates hold
\begin{equation}\label{eq3:alphainvert}
  \begin{aligned}
 \ang\big(
   \Phi(N-n,N)\widetilde V,\,
   \textstyle\medoplus_{i=j}^{\varkappa}\cW_{N-n}^i
 \big)
 &\le \frac{C}{\tilde{\theta}_{\min}} r^n,
 \quad 0\le n\le N, \\
 \big|
 \tilde{\alpha}_s^{\disc}(N-n,\tilde V)-
 \tilde{\alpha}_s^{\disc}\big(
 N-n,\textstyle\medoplus_{i=j}^{\varkappa}\cW_N^i
 \big)
 \big| & \le
 \frac{C }{n \tilde{\theta}_{\min}},
 \quad 1\le n\le N,
\end{aligned}
\end{equation}
where, for $V\in\cG(s,d)$,
\[
 \tilde{\alpha}_s^{\disc}(N-n, V)
 :=
 \frac{1}{n}\sum_{k=1}^{n}
 \ang\left(
 \Phi(N-k+1,N)V,\,
 \Phi(N-k,N) V
 \right).
\]
\end{proposition}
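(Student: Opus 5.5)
The plan is to mirror the proof of Proposition \ref{cor3:anglefast}, applied to the reversed solution operator $\widetilde{\Phi}(n,m)=\Phi(N-n,N-m)$ on $0\le n,m\le N$. All constants must be independent of $N$. In the tilde data, the slow fibers $\cW^{j}_{N-n},\ldots,\cW^{\varkappa}_{N-n}$ become the fastest fibers $\widetilde{\cW}_n^{1},\ldots,\widetilde{\cW}_n^{\tilde j}$ with $\tilde j=\varkappa-j+1$. Their sum is $\kernel(\tilde P_n^{\tilde j+1})=\kernel(I_d-P^{j}_{N-n})=\range(P^j_{N-n})$, and the complement is $\range(\tilde P_n^{\tilde j+1})=\kernel(P^j_{N-n})=\medoplus_{i=1}^{j-1}\cW^i_{N-n}$. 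Hence \eqref{eq3:sinvert} is exactly \eqref{eq3:sexact} for $\widetilde\Phi$ at time $n=0$ with index $\tilde j$. The reversed dichotomy estimates are read off from \eqref{eq3:dichprop}:
\begin{equation*}
\|\Phi(N-n,N)(I_d-P_N^{j})\|\le K_{j-1}(\eps)\,(\sigma_{j-1}^--\eps)^{-n},
\qquad
\|\Phi(N,N-n)P^{j}_{N-n}\|\le K_{j-1}(\eps)\,(\sigma_j^++\eps)^{n}.
\end{equation*}
These hold with constants independent of $N$, because \eqref{eq3:dichprop} is uniform in $0\le m\le n$.

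Next I repeat the graph argument. Since $\widetilde V\cap\kernel(P^j_N)=\{0\}$ and $\dim\widetilde V=s=\dim\range(P^j_N)$, the restriction $P_N^j|_{\widetilde V}$ is invertible. I write $\widetilde V$ as a graph $v=P^j_Nv+\widetilde L_0P_N^jv$ with $\widetilde L_0:\range(P_N^j)\to\kernel(P_N^j)$. Exactly as in \eqref{eq3:estL0}, this gives $\|\widetilde L_0P_N^jv\|\le(1+1/\sin\tilde\theta_{\min})\|P_N^jv\|$. With $w=\Phi(N-n,N)P^j_Nv\in\range(P^j_{N-n})$, I write
\begin{equation*}
\Phi(N-n,N)v-w=\Phi(N-n,N)(I_d-P^j_N)\widetilde L_0\,\Phi(N,N-n)P^j_{N-n}w
\end{equation*}
and bound it by $K^2\big(\tfrac{\sigma_j^++\eps}{\sigma_{j-1}^--\eps}\big)^n(1+1/\sin\tilde\theta_{\min})\|w\|$. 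The map $v\mapsto w$ is onto $\range(P^j_{N-n})=\Phi(N-n,N)\range(P_N^j)$ by invariance. Using \eqref{eq2:chardist} and $\sin x\ge\tfrac{2}{\pi}x$ then yields the first estimate in \eqref{eq3:alphainvert}, with $\eps=\tfrac14\min_i(\sigma_i^--\sigma_{i+1}^+)$ and $r=\tfrac12(1+\max_i\sigma_{i+1}^+/\sigma_i^-)$. Taking the maximum over the indices $i$ makes $C,r$ independent of $j$ and $N$. For the case $j=1$, the complement is trivial, both subspaces equal $\R^d$, and the estimate is immediate.

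For the second estimate, I use the triangle inequality for $\ang$ termwise in $\tilde\alpha_s^{\disc}$, as in the last step of the proof of Proposition \ref{cor3:anglefast}. Each summand with index $k$ differs from the corresponding summand for $\medoplus_{i\ge j}\cW_N^i$ by at most the two angles at times $N-k+1$ and $N-k$. These are bounded by $\tfrac{C}{\tilde\theta_{\min}}(r^{k-1}+r^k)$, since $\Phi(N-k,N)\medoplus_{i\ge j}\cW^i_N=\medoplus_{i\ge j}\cW^i_{N-k}$. Summing the geometric series gives $\tfrac{2C}{n(1-r)\tilde\theta_{\min}}$, and I absorb the factor into $C$.

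The main obstacle is the bookkeeping that the reversed system really satisfies dichotomy estimates uniformly in $N$, with the correct index shift. The forward estimates are stated only for $0\le m\le n$, so the reversed ones must be obtained from them in the appropriate direction, rather than by appeal to a spectral theorem on $[0,N]$. I would check the two displayed inequalities directly from \eqref{eq3:dichprop}, together with invariance $\Phi(n,m)P_m=P_n\Phi(n,m)$, which also holds for $n<m$ by inversion. This avoids relying on the tilde-data dictionary.
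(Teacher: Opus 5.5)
Your proposal is correct and follows the route the paper indicates: you transfer the graph argument of Proposition \ref{cor3:anglefast} to the reversed operator $\widetilde\Phi(n,m)=\Phi(N-n,N-m)$, and you obtain $N$-independent constants by reading the reversed dichotomy estimates directly off \eqref{eq3:dichprop} and invariance rather than through the tilde data, before finishing with the same termwise triangle-inequality bound. One small remark: the case $j=1$ you treat separately cannot occur, since $s<d$ forces $j\ge 2$.
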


Our numerical computations in Section \ref{sec1.3} yield the quantity $\tilde{\alpha}_s^{\disc}(0,\widetilde{V})$, which according to \eqref{eq3:alphainvert} is a good approximation of
  \begin{equation*}
    \tilde{\alpha}_s^{\disc}(0, \textstyle \medoplus_{i=j}^{\varkappa}\cW_{N}^i)=\frac{1}{N} \sum_{\ell=1}^N
    \ang(\textstyle \medoplus_{i=j}^{\varkappa}\cW_{\ell}^i,\textstyle \medoplus_{i=j}^{\varkappa}\cW_{\ell-1}^i),
  \end{equation*}
  i.e., of the angular spectrum that belongs to the slowest fibers.

  \subsection{Combining forward and backward iteration}
  \label{sec3.1a}
Our numerical computations indicate that, for the trajectories of the
H\'enon models considered here, all spectral fibers are one-dimensional. 
Consequently, for the two-dimensional
system $f_2$, we compute the entire angular spectrum by applying Algorithm~\ref{alg1b}
to $f_2$ and $f_2^{-1}$. 

For the three-dimensional H\'enon system $f_3$, however, this approach
yields only the angular value of the fastest trace space $\cW_n^1$
(via forward iteration) and the slowest trace space $\cW_n^3$ (via backward
iteration). 
Furthermore, using Algorithm~\ref{alg2adj}, we have already computed the
normal vector $\eta_n^{1,2}$, which is orthogonal to $\cW_n^1 \oplus
\cW_n^2$ via forward iteration as well as the normal vector
$\eta_n^{2,3}$ via backward iteration, which is orthogonal to $\cW_n^2 \oplus
\cW_n^3$. The cross product $\omega_n^2=\eta_n^{1,2} \times \eta_n^{2,3}$ of these two
vectors spans the second trace space, i.e., $\Span(\omega_n^2) =
\cW_n^2$ for $n\in\{0,\dots,N\}$.
 
In summary, for $i\neq j\in\{1,2,3\}$ and
$n\in\{0,\dots,N\}$, the algorithms described above yield vectors $\omega_n^i$ and $\eta_n^{i,j} =
\omega_n^i \times \omega_n^j$ such that 
\[
\Span (\omega_n^i) = \cW_n^i \quad \text{and}\quad 
\Span (\eta_n^{i,j}) = (\cW_n^i \oplus \cW_n^j) ^\bot.
\]  
Using these subspaces together with \eqref{eq:perpequal}, we compute
the approximate angular spectra $\Sigma_{s,N}^{\disc}$ without requiring any further iterations:
\begin{align*}
\Sigma_{1,N}^{\disc} &= \{\tilde\alpha_1^{\disc}(0,\cW^i_N): i\in\{1,2,3\}\}
= \left\{\textstyle \frac 1 N \sum_{\ell=1}^N
                   \ang(\omega^i_\ell,\omega_{\ell-1}^i):
                   i\in\{1,2,3\}\right\},\\  
\Sigma_{2,N}^{\disc} &= \{\tilde\alpha_2^{\disc}(0,\cW^i_N\oplus \cW^j_N): i,j\in\{1,2,3\},\
                   i< j\}\\
&=  \left\{\textstyle \frac 1 N \sum_{\ell=1}^N
                   \ang(\eta^{i,j}_\ell,\eta_{\ell-1}^{i,j}):
                   i,j\in\{1,2,3\},\ 
                   i<j\right\}.
\end{align*} 

Figure \ref{Fh6} shows the resulting histograms.
In \cite[Section 4.2.4]{BeHu22}, the numerical results for both H\'enon
models were obtained using the general algorithm described in
Section \ref{sec3.2}. Here, we reproduce these results using forward
and backward iterations without having to compute the dichotomy spectrum. 

\begin{figure}[H]
\begin{center}
\begin{tabular}{cc}
$s = 1$ & $s=2$\\[-1mm]
\includegraphics[width=0.47\textwidth]{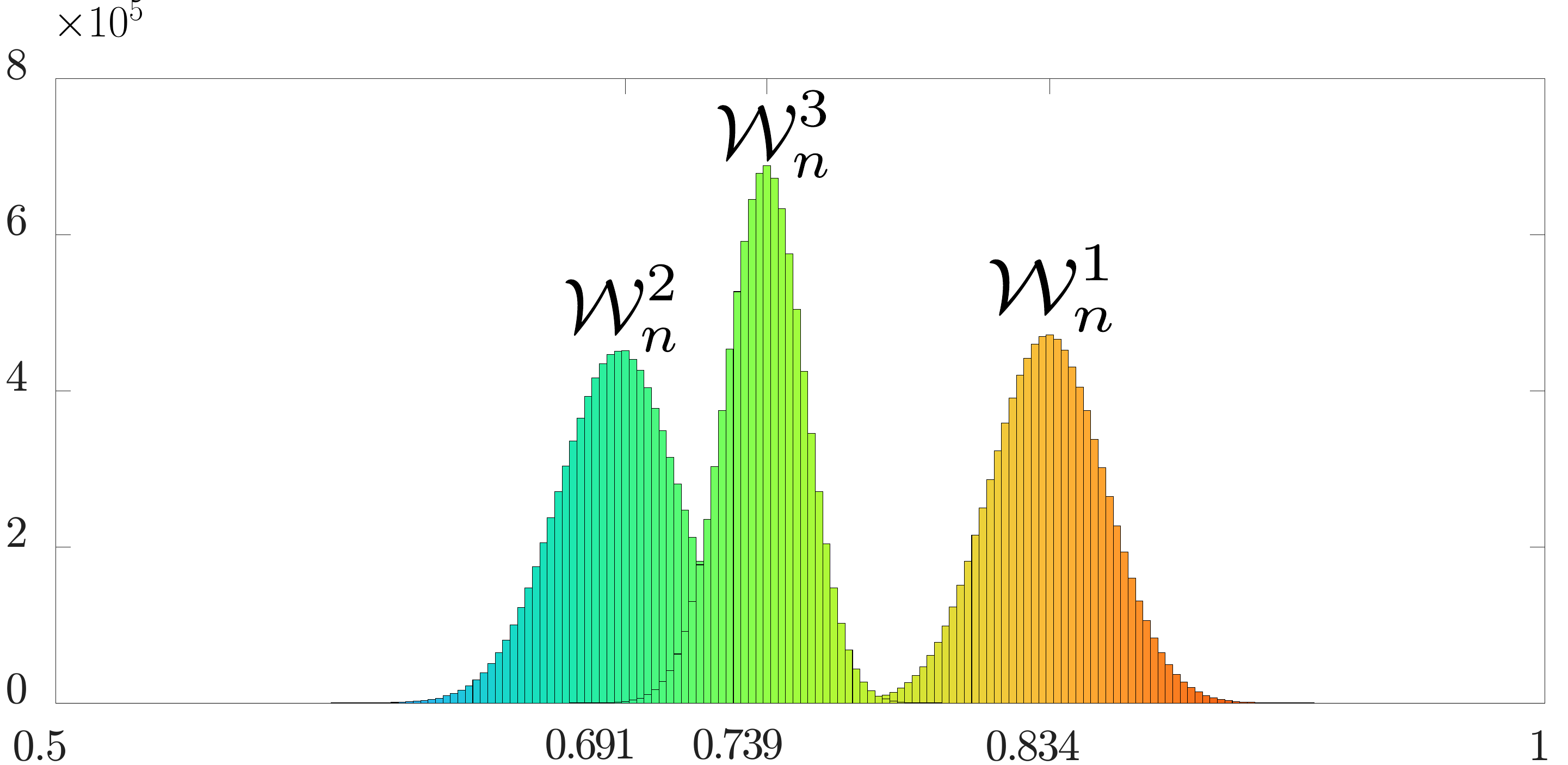}&
\includegraphics[width=0.47\textwidth]{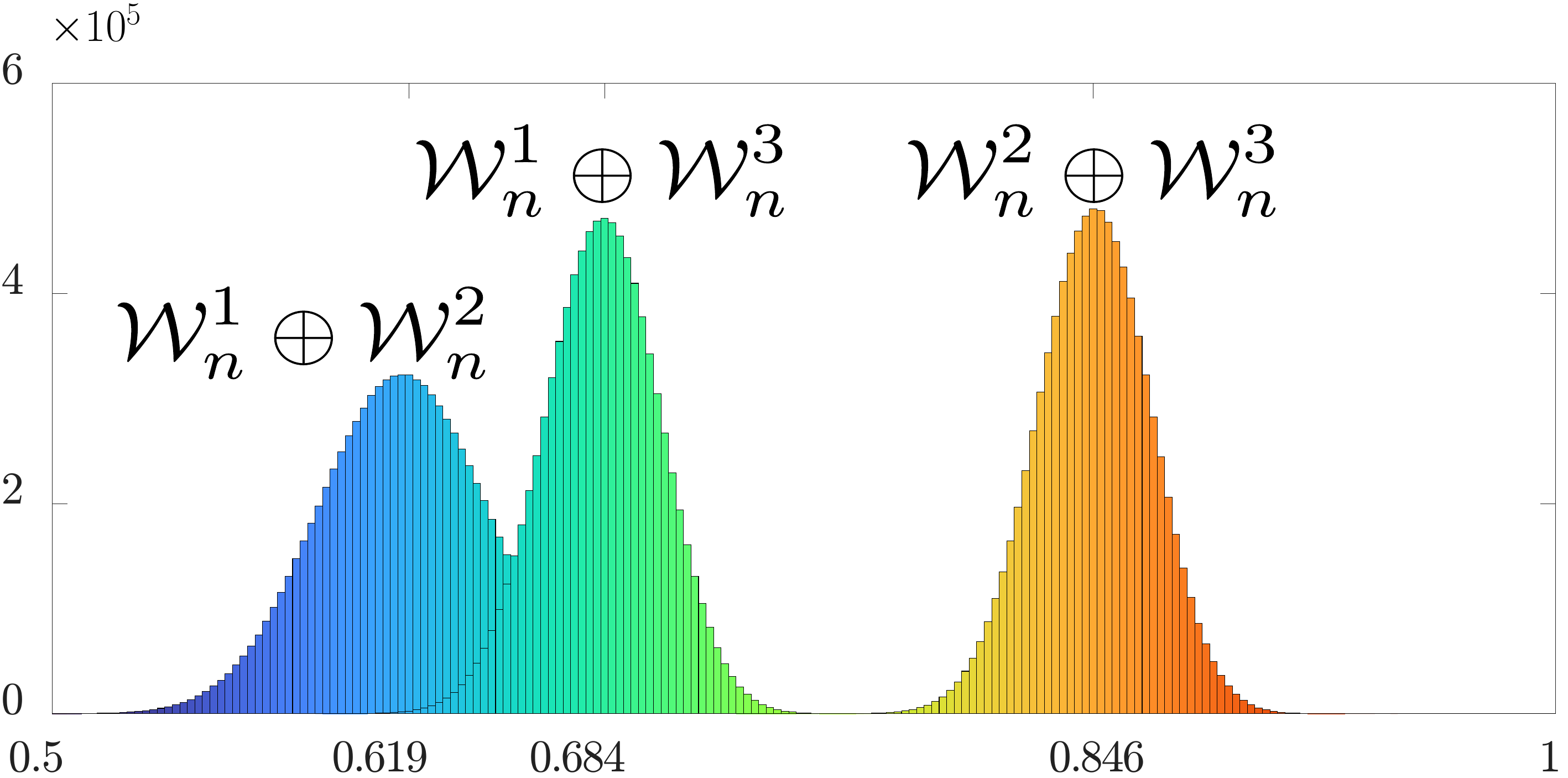}
\end{tabular}      
\end{center}
\caption{Histograms of the angular map for the three-dimensional
  H\'enon map $f_3$, computed with  $N=1000$, for $s=1$ (left) and
  $s=2$ (right). Each peak is  labeled with its associated trace space.  \label{Fh6}}   
\end{figure}

In Figure \ref{Fh7}, we vary the length $N$ of the computed orbit over
the range $10^2\leq N\leq 10^7$ while keeping the initial point fixed
at $x_0=(0.2,\ 0.1,\ 0)^\top$. The orbit starting at $x_0$ approaches the
attractor of the three-dimensional H\'enon system. 
Our numerical results suggest that the six angular sums converge to angular spectral values as
$N$ increases. 
  
\begin{figure}[hbt]
\begin{center}
\includegraphics[width=0.7\textwidth]{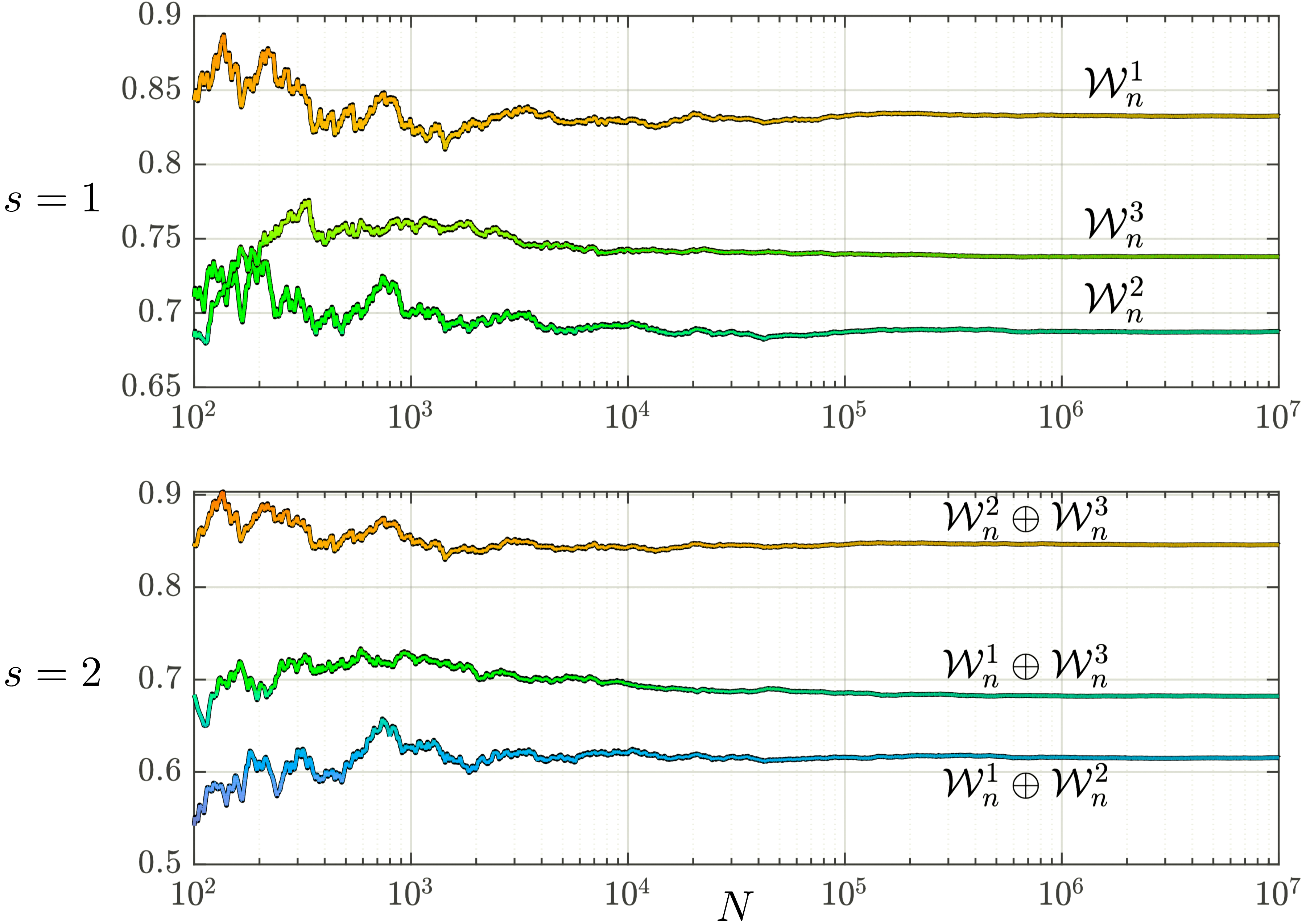}
\end{center}
\caption{Angular values for the fixed initial point $x_0=(0.2,\ 0.1,\ 0)^\top$ and
orbit lengths ranging from $N=10^2$ to $N=10^7$, with $s=1$ in the
upper panel and $s=2$ in the lower panel. The color coding is the same
as in Figure \ref{Fh6}. \label{Fh7}}  
\end{figure}

\subsection{The general algorithm in discrete time}
\label{sec3.2}
In Section \ref{sec3.1a} we approximated the entire angular spectrum along the
trajectories considered for the three-dimensional Hénon system using
the QR-like Algorithm \ref{alg2}. This worked since
all fibers are one-dimensional and since we employed techniques like inversion of the
system and forming orthogonal complements or intersections of fibers. 
For systems of dimension $d>3$, combining forward and backward
iteration becomes more involved. For these systems, as well as
for systems with spectral fibers of dimension $\ge 2$, we use
the trace-space approach described below to obtain a complete
picture of the angular spectrum and the corresponding angular map.
 
According to  Section \ref{sec3.1} we
can achieve this by computing the angular ranges \eqref{eq2:angrange} for all trace
spaces $V \in \cD_0(s,d)$; see \eqref{trace}. This in turn requires to first compute the
dichotomy spectrum and its associated invariant foliation. Then in a second step
one can form the possible trace spaces and compute their angular spectral values. Such an
algorithm was set up  in \cite[Section 4.2]{behu24b}
under the assumption that all spectral fibers are one- or two-dimensional.
The algorithm proves to be
quite expensive if the set of trace spaces is not finite but consists of one- or  two-dimensional
continua (see the remarks following \eqref{eq3:redtrace}).
In such cases, we limit ourselves to computing  lower and upper
bounds of the set $\Sigma_{s,N}^{\disc}(x_0) = \{\alpha_s^{\mathrm{disc}}(N,V;x_0):
V\in\mathcal D_0(s,d;x_0)\}$, given by
\begin{equation*}
  \Sigma_{s,N}^{\disc, \min}(x_0) =\inf \ \Sigma_{s,N}^{\disc}(x_0), \quad
  \Sigma_{s,N}^{\disc, \max}(x_0) =\sup \ \Sigma_{s,N}^{\disc}(x_0).
  \end{equation*}
Here $\mathcal D_0(s,d;x_0)$ denotes the set of trace spaces
associated with the variational equation along the trajectory
starting at $x_0$.

We consider maps    
$F:\R^d \to \R^d$ and let  $\cB= \bigtimes_{j=1}^d[a_j,b_j]$ be a
$d$-dimensional  rectangular box. By $L$ we denote  the resolution of our grid and 
define $\cL =\{1,\ldots,L\}$ and $h_i = \frac {b_i-a_i}L$ for $i=1,\dots,d$. 
For every multi-index
\[
    \bi=(i_1,\ldots,i_d)\in\mathcal L^d,
\] 
we denote the corresponding grid box by
\[
    \mathcal B_{\boldsymbol{i}}
    :=\bigtimes_{j=1}^d [a_j+(\boldsymbol{i}_j-1)h_j, a_j+ \boldsymbol{i}_jh_j].
\] 
 
The general algorithm reads:
\begin{algorithm}
\caption{Angular maps for $s \in \{1,2\}$ and general $d$ \label{alg4}}
\textcolor{header1}{
\begin{algorithmic}[1] 
\For {$\bi\in\cL^d$}
\State Choose the midpoint $x_0 \in \cB_{\bi}$
\For {$n = 1,\dots,N$}
\State $x_{n} = F_{n-1}(x_{n-1})$
\State $A_{n-1} = DF_{n-1}(x_{n-1})$
\EndFor
\State Compute the angular spectral values
$\Sigma_{s,N}^{\disc,\min}$ and $\Sigma_{s,N}^{\disc,\max}$
for the linear difference equation
$u_{n+1}=A_nu_n$, $n=0,\dots,N-1$,
using the algorithm from \cite[Section 4.2]{behu24b}
with buffer intervals of length $B$
\State Color the box $\cB_{\bi}$  according to
$\Sigma_{s,N}^{\disc,\min}$ and $\Sigma_{s,N}^{\disc,\max}$, respectively
\EndFor
\end{algorithmic}}
\end{algorithm}

The role of the buffer intervals is explained below. 
In Algorithm \ref{alg4}, we color the initial box of a starting point according to the minimal resp.\ the maximal angular value of the
corresponding trajectory. These
values depend on the finite length $N$ of  the trajectory, but we expect them to converge as $N \to \infty$.
 
The major work of the algorithm occurs in step 6, where the
angular spectral values
$\Sigma_{s,N}^{\disc, \min}$ and $\Sigma_{s,N}^{\disc, \max}$ are computed. It is essential
that the algorithm in \cite[Section 4.2]{behu24b} computes these values 
for the trace spaces $V$ in $\cG(s,d)$ rather than for all
elements of $\cG(s,d)$; see \eqref{trace}.
The computation of trace spaces requires most of the computational effort. In some
cases, such as the Lorenz system treated in Section \ref{sec_lorenz}, it turns out that 
only finitely many trace spaces (in fact, very few) are analyzed by the algorithm. 

Approximation errors occurring in the computation of trace spaces
decay exponentially fast from the boundary of the
computational interval toward its center; see \cite{h17}, \cite[Section~4.2]{behu24b}. Thus, we introduce left and
right buffer intervals and utilize only the accurate data between these buffer
intervals for computing the angular maps.   

\subsection{The  area-preserving H\'enon map}
\label{sec3.4}
We consider an area-preserving version of H\'enon's map  (see
\cite{MCC20})
\begin{equation}\label{henonap}
f_{a}:\begin{array}{rcl} \R^2 & \to & \R^2\\
\begin{pmatrix}x_1\\x_2\end{pmatrix} & \mapsto &
\begin{pmatrix} a -x_1^2 + x_2\\ x_1\end{pmatrix},
\end{array}
\end{equation}
with parameter $a=0.3$.
As we show below, trajectories on certain invariant curves surrounding
the elliptic periodic points have dichotomy spectrum
$\Sigma_{\mathrm{ED}}=\{1\}$. For these trajectories, the only
spectral fiber is $\R^2$, so determining the extremal angular
spectral values requires optimization over all one-dimensional
subspaces. 

For a numerical experiment we choose $\cB=[-1,1]^2$. Figure \ref{Fh8} shows the midpoints of an $L\times
L$ grid, with $L=10^3$, whose trajectories remain in $\cB$ throughout
$N=2000$ iterations. 

 For every grid point, the left panel
displays the computed minima, whereas the right
panel displays the computed maxima. The
corresponding histograms are shown in the lower row. Figure 
\ref{Fh9} displays the pointwise difference between the maximal and
minimal angular spectral values shown in Figure \ref{Fh8}.
A logarithmic color scale is used to make small differences more visible. 

\begin{figure}[hbt]
\begin{center}
\includegraphics[width=0.95\textwidth]{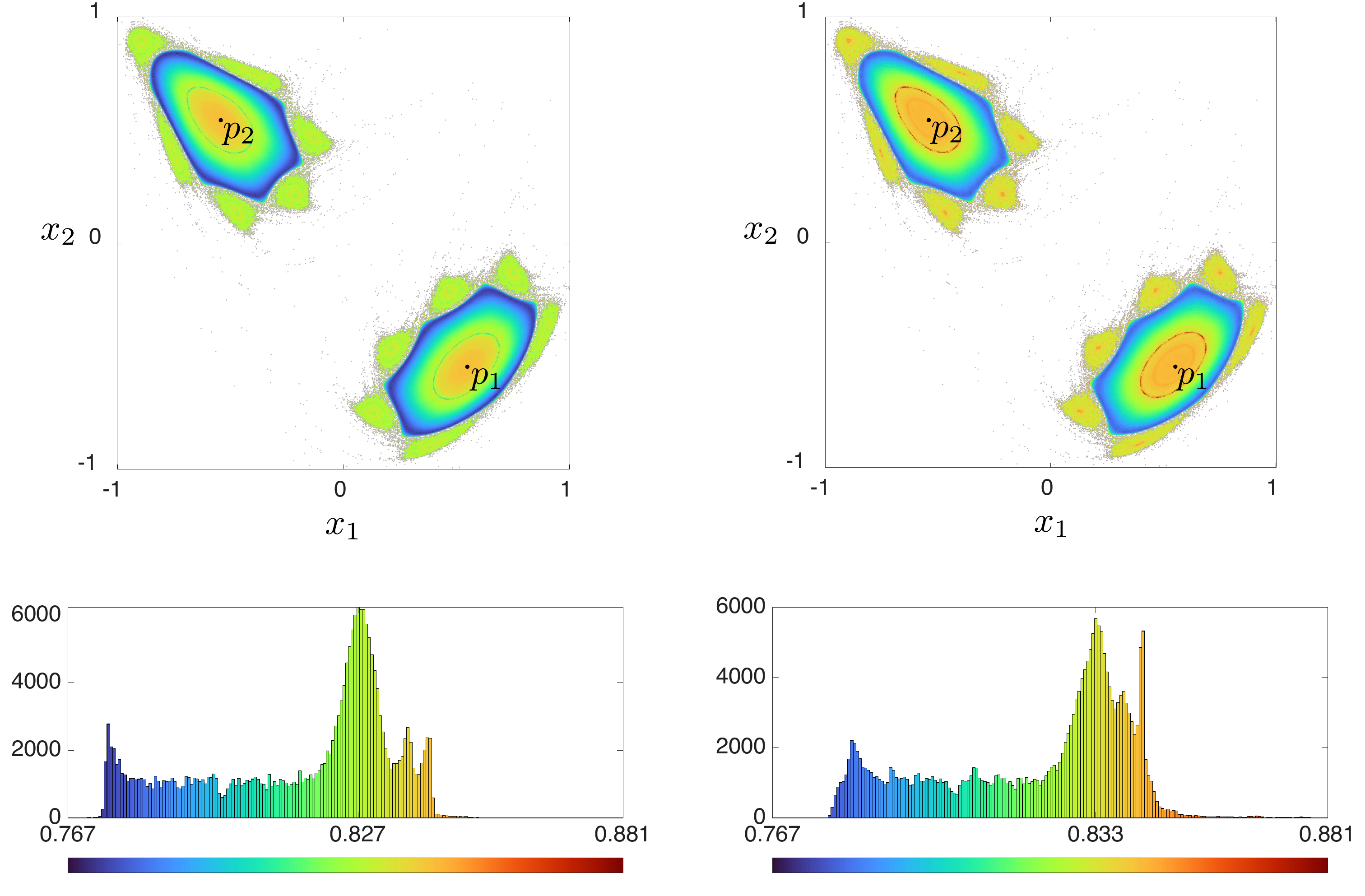}
\end{center}
\caption{Angular maps of the minimum (left) and maximum (right)
  angular spectral values for the area-preserving H\'enon
  map \eqref{henonap}. The corresponding histograms are shown in the
  lower row. \label{Fh8}}  
\end{figure}

\begin{figure}[hbt]
\begin{center}
\includegraphics[width=0.45\textwidth]{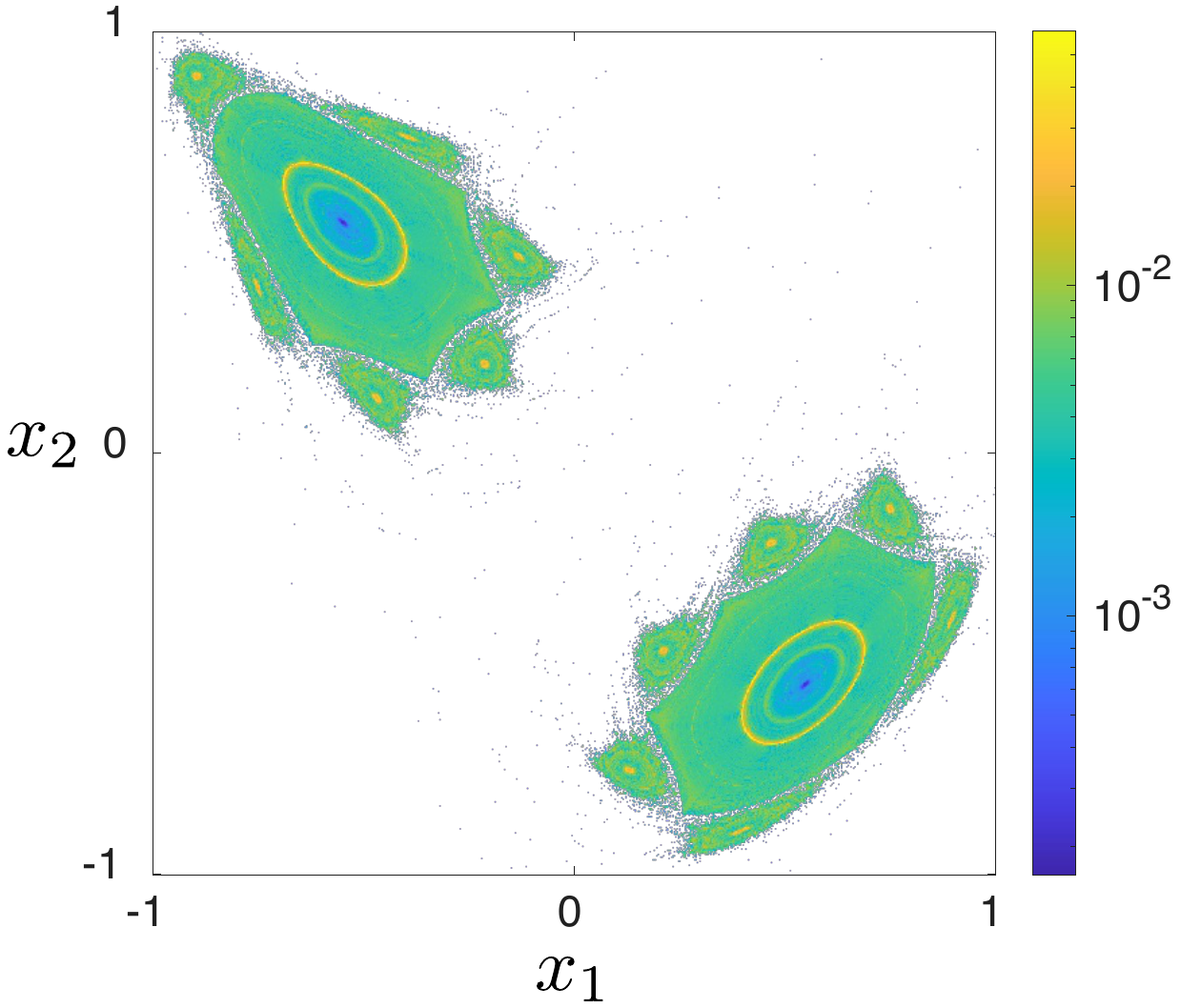}
\end{center}
\caption{Pointwise difference between the maximal and minimal angular
  spectral values shown in Figure \ref{Fh8}. The colors are
  displayed on a logarithmic scale. \label{Fh9}}  
\end{figure}
Let us add a few analytical findings for this system with $0<a<1$, which are in accordance with the
numerical observations. Area preservation follows from $|\det(Df_{a}(x))|=1$ for all
$x \in \R^2$. Then  there are two hyperbolic fixed points $x_{\pm}=(\pm\sqrt{a},\pm\sqrt{a})^{\top}$
for which $Df_a(x_{\pm})$ has eigenvalues of modulus
 $\sqrt{a}+\sqrt{1+a}>1$ and $ -\sqrt{a}+\sqrt{1+a}<1$.
Further, there is a $2$-periodic orbit
\begin{equation*}
  p_1=\begin{pmatrix} \sqrt{a} \\ -\sqrt{a} \end{pmatrix}, p_2=f_a(p_1)=-p_1
\end{equation*}
located at the centers of the two colored domains in Figure \ref{Fh8}. The points $p_{1,2}$
are elliptic fixed points of $f^2$, since $D(f_a^2)(p_1)=Df_a(p_2)Df_a(p_1)$ has eigenvalues
\begin{equation*}
  \lambda_{+}= 1 -2a + 2 i \sqrt{a(1-a)}, \quad \lambda_-=\overline{\lambda_{+}},
    \quad |\lambda_{\pm}|=1.
\end{equation*}
Therefore, the linearization about the two-periodic orbit $p_1,p_2,p_1,p_2,\ldots$ has no
exponential dichotomy.
One  can say much more about trajectories starting near these elliptic points
by applying Moser's Twist Theorem \cite[Ch.\ II.4]{Moser1973}.
\begin{proposition} \label{prop3:twist}
  Let $a \in (0,1)\setminus \{\tfrac{1}{2}, \tfrac{3}{4},
  \tfrac{5}{8}\}$. In every neighborhood of $p_1$ there are invariant
  curves of $f_a^2$,  
  on which $f_a^2$ is $C^1$-conjugate to a rigid rotation with some diophantine frequency
  $\omega$, i.e., $|\tfrac{\omega}{2 \pi}-\tfrac{p}{q}|\ge C
  q^{-\ell}$ for all $p \in \Z$, $q \in \N$ and some $\ell>2$,
  $C>0$.  Initial points on these curves lead to 
  trajectories for which the linearization of $f_a$ has dichotomy spectrum $\Sigma_{\ED}=\{1\}$.
\end{proposition}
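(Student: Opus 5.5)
Two things must be shown: the KAM part (existence of invariant curves of $f_a^2$ near $p_1$ on which the map is $C^1$-conjugate to a Diophantine rotation), and the spectral part ($\Sigma_{\ED}=\{1\}$ along the resulting trajectories).

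\textbf{Existence of the invariant curves.} The plan is to verify the hypotheses of Moser's Twist Theorem \cite[Ch.\ II.4]{Moser1973} for the area-preserving analytic map $g=f_a^2$ at its elliptic fixed point $p_1$.
\begin{itemize}
\item[(a)] \emph{Rotation number and resonances.} The eigenvalues are $\lambda_\pm=e^{\pm i\gamma}$ with $\cos\gamma=1-2a$, so $\gamma\in(0,\pi)$ for $a\in(0,1)$. The low-order resonances $\lambda^q=1$ for $q=1,2,3,4$ correspond to
\begin{itemize}
\item[] $\cos\gamma=1$, i.e. $a=0$;
\item[] $\cos\gamma=-1$, i.e. $a=1$;
\item[] $\cos\gamma=-\tfrac12$, i.e. $a=\tfrac34$;
\item[] $\cos\gamma=0$, i.e. $a=\tfrac12$.
\end{itemize}
Excluding these values is exactly what the Birkhoff normal form up to order three requires:
\[
g(z)=\lambda z\, e^{i\beta|z|^2}+O(|z|^4)
\]
in a complex coordinate $z$ centred at $p_1$.
\item[(b)] \emph{Nondegeneracy (twist) condition.} One must show that the first Birkhoff coefficient satisfies $\beta\neq0$. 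I would compute $\beta$ explicitly: translate $p_1$ to the origin, write $Df_a(p_2)Df_a(p_1)$ together with the quadratic terms of $f_a$ (the only nonlinearity is $-x_1^2$), diagonalize the linear part, and apply the standard formula for $\beta$ in terms of the quadratic coefficients. This yields a rational expression in $a$.
\item[(c)] \emph{Conclusion of the KAM step.} I expect $\beta$ to vanish exactly at $a=\tfrac58$, which would explain that exclusion. Given $\beta\neq0$, Moser's theorem gives, in every neighbourhood of $p_1$, invariant closed curves on which $g$ is conjugate to a rotation by a Diophantine $\omega$. With smooth (here analytic) data, the conjugacy can be taken at least $C^1$.
\end{itemize}
Step (b) is the main obstacle: it is a tedious but finite normal-form computation, and getting the constant right is what pins down the exceptional value $\tfrac58$.

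\textbf{Dichotomy spectrum on an invariant curve.} Let $x_0$ lie on such a curve $\Gamma$, and let $h:\T\to\Gamma$ be a $C^1$ diffeomorphism with $g\circ h=h\circ R_\omega$.
\begin{itemize}
\item[(d)] \emph{Growth along the tangent direction.} Differentiating the conjugacy gives
\[
Dg^n(h(\theta))\,h'(\theta)=h'(\theta+n\omega).
\]
The tangent vectors $h'$ are bounded above and away from zero, so the tangent direction of $\Gamma$ is neither expanded nor contracted under iteration.
\item[(e)] \emph{Growth of the full cocycle.} Since $\det Dg=1$, use the basis consisting of $h'(\theta)$ and a continuous transversal vector field along $\Gamma$. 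In this basis the cocycle $Dg^n$ is upper triangular, with both diagonal entries equal to $1$: the first by (d), the second because the determinant is $1$.
\item[(f)] \emph{Subexponential bound.} The off-diagonal entry is a Birkhoff sum of a bounded continuous function, so it is $O(n)$. Therefore $\|Dg^n\|$ and $\|Dg^{-n}\|$ grow at most linearly, uniformly in $\theta$.
\item[(g)] \emph{Passing from $g=f_a^2$ to $f_a$.} The odd-time iterates differ from the even-time ones by one factor of $Df_a$, which is bounded and has bounded inverse. Hence $\Phi(n,m)$ satisfies $\|\Phi(n,m)\|\le C(1+|n-m|)$ for all $n,m$.
\item[(h)] \emph{Identifying the spectrum.} Linear growth rules out exponential dichotomies for every $\gamma\neq1$: for $\gamma<1$ every solution grows relative to $\gamma^n$, and for $\gamma>1$ every solution decays relative to $\gamma^n$, so the dichotomy splits are trivial. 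At $\gamma=1$, however, there is no dichotomy: an exponential dichotomy for the unscaled cocycle would force nontrivial exponential growth or decay, which contradicts the tangent direction staying bounded. Hence $\Sigma_{\ED}=\{1\}$.
\end{itemize}
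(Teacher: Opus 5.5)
Your proposal follows the paper's proof essentially step for step. You exclude the resonances $\lambda_+^k=1$, $k\le 4$ (within $(0,1)$ these are $a=\tfrac12,\tfrac34$), pass to the Birkhoff normal form with nonzero twist, and apply Moser's Twist Theorem; you then represent $D(f_a^{2n})$ along the invariant curve as a shear in a frame $(\tau,\rho)$, which gives at most linear growth and hence $\Sigma_{\ED}=\{1\}$. Your steps (g)--(h) in fact spell out the two-parameter bound and the case $\gamma=1$ more carefully than the paper does.

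The twist computation you leave open is exactly what the paper supplies via a symbolic toolbox: $B_1(a)=\frac{8a-5}{4\sqrt{a}(1-a)(4a-3)}$, which confirms your guess about $a=\tfrac58$. In (e), the second diagonal entry equals $1$ only if the transversal field is normalized so that $\det(h',\rho)\equiv 1$; the paper takes $\rho=Jh'/\|h'\|^2$ with $J$ the rotation by $\pi/2$. With any other choice the entry is $\delta(\theta)/\delta(\theta+\omega)$, where $\delta=\det(h',\rho)$. Its products along the orbit telescope and stay bounded, so your conclusion is unaffected.
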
  
\begin{proof}
  First note that the assumption $a\in (0,1)\setminus \{\tfrac{1}{2}, \tfrac{3}{4}\}$
    excludes the $1:k$ resonance $\lambda_+^k=1$ for $k=1,\ldots,4$. Then there
    are local coordinate changes under which $f_a^2$ near $p_1$ can be put into
    Birkhoff normal form \cite[Ch.\ II.4, Theorem 2.12]{Moser1973}, 
   given in complex notation as 
  \begin{equation*} \label{eq3:Birkhoffnormalform}
    \C \ni z \mapsto \lambda_+z(1+i B_1(a) |z|^2) + \mathcal{O}(|z|^4).
  \end{equation*}
  Note that the variable $z$ can still be scaled by a nonzero complex number, so that the 
  first Birkhoff coefficient $B_1(a)$ is only unique up to a positive multiple.
  With a suitable normalization, a symbolic toolbox yields the expression
  \begin{equation*}
    B_1(a)= \frac{8a-5}{4 \sqrt{a}(1-a)(4a-3)}.
  \end{equation*}
  Hence, we have $B_1(a)\neq 0$ for $a \neq \tfrac{5}{8}$, and the Twist Theorem 
  \cite[Ch.\ II.4, Theorem 2.11]{Moser1973} applies.
  Let  $\Gamma$ be one of the emerging invariant curves. Then there exists a $C^1$-immersion  $H: S^{2\pi}=\R/(2 \pi \Z) \to \R^2$  such that $H(S^{2 \pi})=\Gamma$ and
    \begin{equation} \label{eq3:f2conjugate}
      f_a^2 \circ H = H \circ R_{\omega}, \quad R_{\omega}(\theta)= \theta+ \omega \; \text{mod}
     \;  2 \pi,
    \end{equation}
    for some $\omega$ satisfying the diophantine condition.

    For $\theta \in S^{2 \pi}$ the vectors
    $\tau(\theta)=H'(\theta)$ and
    $\rho(\theta)=J \tau(\theta)\|\tau(\theta)\|^{-2}$, $J=\left(\begin{smallmatrix} 0 & -1 \\1 &0\end{smallmatrix}\right)$,
      form an orthogonal system at $H(\theta)$ satisfying
      $\det\left( \begin{smallmatrix} \tau(\theta) & \rho(\theta) \end{smallmatrix}\right)=1$.
        By differentiating
    \eqref{eq3:f2conjugate} one finds
    \begin{equation} \label{eq3:df2rep}
      D(f_a^2)(H(\theta))\begin{pmatrix} \tau(\theta) & \rho(\theta) \end{pmatrix}=
      \begin{pmatrix} \tau(\theta+\omega) & \rho(\theta+\omega) \end{pmatrix}
      \begin{pmatrix} 1 & c(\theta) \\ 0 & d(\theta) \end{pmatrix}.
    \end{equation}
    Taking the determinant, we obtain $d(\theta)\equiv 1$ and 
    \begin{equation*}
      c(\theta)= \frac{ \tau(\theta+\omega)^{\top} D(f_a^2)(H(\theta)) \rho(\theta)}{\| \tau(\theta+\omega)\|^2}.
    \end{equation*}
    By induction the representation of the Jacobian in suitable coordinates \eqref{eq3:df2rep} leads to

    \begin{equation} \label{eq3:df2it}
      D(f_a^{2n})(H(\theta))\begin{pmatrix} \tau(\theta) & \rho(\theta) \end{pmatrix}=
      \begin{pmatrix} \tau(\theta+n \omega) & \rho(\theta+n \omega) \end{pmatrix}
      \begin{pmatrix} 1 & \sum_{j=0}^{n-1}c(\theta+j \omega) \\ 0 & 1 \end{pmatrix}.
    \end{equation}
    Since $c(\theta)$ and the matrix $\left(\begin{smallmatrix} \tau(\theta) & \rho(\theta) \end{smallmatrix}
    \right)$ as well as its inverse are uniformly bounded, we obtain a constant $C>0$ such
    that
    \begin{equation*}
      \|D(f_a^{2n})(H(\theta))\| \le C(1+ |n|) \quad \text{for all} \; n \in \Z, \; \theta \in S^{2\pi}.
    \end{equation*}
    Thus, the variational equation for $f_a^2$ has no exponentially growing solutions in forward or backward
    time. If $f_a$ has such a solution, it will have the same behavior
    for $f_a^2$, and therefore we 
    conclude $\Sigma_{\ED}=\{1\}$.
      \end {proof}

\begin{remark}\label{rem3:polynomial}
In \eqref{eq3:df2it}, the Jacobians $D(f_a^{2n})$ are represented
in local coordinates by a shear matrix
$\left( \begin{smallmatrix} 1 & c_n \\
    0 & 1 \end{smallmatrix}\right)$, where
$c_n=\sum_{j=0}^{n-1}c(\theta+j\omega)$.
On sufficiently small Moser invariant curves, the nonzero twist
implies linear growth of these shear sums. Consequently,
nontangential directions approach the tangent direction at a
rate $\cO(n^{-1})$.
In numerical computations, we also observe that one-dimensional
subspaces approach each other at a polynomial rate.
However, we do not exploit this behavior and retain the original
optimization procedure.
\end{remark}


\section{Angular maps for continuous flows and a general algorithm}
\label{sec4} 

In this section we consider general nonautonomous systems in continuous time
\begin{equation} \label{eq3:contnonlin}
  \dot{x}(t) = f(t,x(t)), \quad t \in \R, \quad x(t) \in \R^d,
\end{equation}
defined by some  vector field $f\in C^1(\R\times \R^d,\R^d)$. Extending the theory from
\cite{behu24a}, we set up proper notions of angular spectrum and angular map
for systems obtained by linearizing \eqref{eq3:contnonlin} at a solution.
Further, we show that the angular spectrum of the discretized linear system 
converges to its continuous counterpart when the step-size goes to zero.
\subsection{Definition of angular spectrum and angular maps}
\label{sec4.1}
Let $\Psi(t_1,t_0;x_0)$ denote the solution operator of \eqref{eq3:contnonlin}, which maps the
initial value $x(t_0)=x_0$ to the solution of \eqref{eq3:contnonlin} at time $t_1$.
We choose $t_0=0$ and note that $\Psi(t,0;x_0)$ exists on an open
maximal interval $J=J(x_0) \subseteq \R$ 
containing $0$. The associated variational equation is 
\begin{equation} \label{eq3:contvari}
  \dot{u}(t)= A(t;x_0)u(t),  \; t \in J, \quad A(t;x_0) = \frac{\partial f}{\partial x}(t,\Psi(t,0;x_0)).
  \end{equation}
By $u(t,s;x_0,u_0)$  we denote the solution of this linear system with initial condition $u(s)=u_0$ for some $s \in J$ and $u_0 \in \R^d$. Since the solution depends linearly on $u_0$, this defines the solution operator $\Phi(\cdot,\cdot;x_0): J(x_0) \times J(x_0) \to \R^{d,d}$ via
\begin{equation*}
  \Phi(t,s;x_0)u_0= u(t,s;x_0,u_0).
\end{equation*}
In \cite[Def.\ 4.3]{behu24a} we defined angular values for linear
time-varying systems of the type \eqref{eq3:contvari} using the
following quantities for $T>0$ and $V \in \cG(s,d)$:
\begin{equation} \label{eq3:defcont}
     \alpha_s^{\cont}(T,V;x_0)= \frac{1}{T} \int_0^T \|(I_d- P_{\Phi(t,0;x_0)V})A(t;x_0) P_{\Phi(t,0;x_0)V}\| dt, 
\end{equation}
where $\|\cdot\|$ is the spectral norm and $P_{\Phi(t,0;x_0)V}$ denotes the orthogonal projector
onto the subspace $\Phi(t,0;x_0)V \in \cG(s,d)$. 
 
In what follows we consider a domain $\cB \subseteq \R^d$ such that the solutions of \eqref{eq3:contnonlin} exist for all $t \ge 0$ and
for all $x_0 \in \cB$. Moreover, we assume that
we have a uniform bound
\begin{equation*}
  \| A(t;x_0)\| \le C, \quad \forall \, t\ge 0, x_0 \in \cB.
\end{equation*}
This implies the bound $\alpha_s^{\cont}(T,V;x_0)\le C$ for the values in \eqref{eq3:defcont} and hence that the limit superior and
limit inferior are finite.
This justifies the following analog of Definition \ref{def2:angspec}.

\begin{definition} \label{def3:contang}
  For any $ V \in \cG(s,d)$ and $x_0 \in \cB$ the interval
  \begin{equation} \label{eq3:contrange}
     I_s^{\cont}(V;x_0) = \left[
       \varliminf_{T\to \infty} \alpha_s^{\cont}(T,V;x_0) , \varlimsup_{T\to \infty}\alpha_s^{\cont}(T,V;x_0) \right]
  \end{equation}
  is called the $s$-dimensional \textbf{angular range} of $V$ for the linear system \eqref{eq3:contvari}. 
  The {\bf angular spectrum} of dimension $s \in \{1,\ldots,d\}$ is defined by
  \begin{equation} \label{eq3:defangspec}
    \Sigma_s^{\cont}(x_0) = \mathrm{cl}\Big[ \bigcup_{V \in \cG(s,d)} I_s^{\cont}(V;x_0) \Big]. 
  \end{equation}
  The map
  \begin{equation*}
\Sigma_s^{\cont}: \begin{array}{rcl} \cB & \to & \mathcal{A}(\R)\\
x_0 & \mapsto & \Sigma_s^{\cont}(x_0)
\end{array}
  \end{equation*}
  is called the \textbf{angular map} of the system
  \eqref{eq3:contvari} in the domain $\cB\subseteq \R^d$. Here
  $\mathcal{A}(\R)$ denotes the collection of nonempty compact subsets of $\R$. 
\end{definition}
The outer angular values defined in \cite[Def.\ 4.3]{behu24a} depend on $x_0$ in our setting. They are related to the angular spectrum as follows:
\begin{equation*}
  \vartheta_s^{\inf,\varliminf}(x_0)=\inf \ \Sigma_s^{\cont}(x_0), \quad \vartheta_s^{\sup,\varlimsup}(x_0)= \sup \  \Sigma_s^{\cont}(x_0).
  \end{equation*}
The analog of Lemma \ref{lem2:accu} in the continuous case is the following.
  \begin{lemma} \label{lem4:accucont}
    The value  $\theta \in\R$ belongs to  $I_s^{\cont}(V;x_0)$ if and only if there exists a sequence $T_n \to \infty$ such that 
    $\lim_{n \to \infty} \alpha_s^{\cont}(T_n,V;x_0) = \theta$.
  \end{lemma}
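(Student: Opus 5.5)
The plan is to prove both directions directly from the definition of $\varliminf$ and $\varlimsup$, using continuity of $T\mapsto \alpha_s^{\cont}(T,V;x_0)$ on $(0,\infty)$ and the intermediate value theorem. Write $a(T)=\alpha_s^{\cont}(T,V;x_0)$, $\underline{a}=\varliminf_{T\to\infty}a(T)$ and $\overline{a}=\varlimsup_{T\to\infty}a(T)$. Both are finite by the uniform bound $a(T)\le C$.

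The first step is continuity of $a$. The integrand $g(t)=\|(I_d-P_{\Phi(t,0;x_0)V})A(t;x_0)P_{\Phi(t,0;x_0)V}\|$ is continuous in $t$. Indeed, $A(t;x_0)$ is continuous since $f\in C^1$ and the trajectory is continuous. Moreover, $t\mapsto P_{\Phi(t,0;x_0)V}$ is continuous: with a basis matrix $V_B$ and $Y(t)=\Phi(t,0;x_0)V_B$ of full rank, we have $P=Y(Y^\top Y)^{-1}Y^\top$. Also $g\le C$. Hence $T\mapsto\int_0^T g\,dt$ is continuous, and so is $a(T)=\frac1T\int_0^T g\,dt$ for $T>0$.

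For the direction ``if'': suppose $T_n\to\infty$ and $a(T_n)\to\theta$. Then by definition $\underline{a}\le\liminf_n a(T_n)=\theta=\limsup_n a(T_n)\le\overline{a}$, so $\theta\in I_s^{\cont}(V;x_0)$.

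For the direction ``only if'', take $\theta\in[\underline{a},\overline{a}]$. If $\theta=\underline{a}$ or $\theta=\overline{a}$, the sequence exists directly from the characterization of $\liminf$ and $\limsup$ as the smallest and largest subsequential limits. If $\underline{a}<\theta<\overline{a}$, choose sequences $S_k\to\infty$ and $R_k\to\infty$ with $a(S_k)<\theta<a(R_k)$ for large $k$. These exist since $a$ is below $\theta$ along some tail sequence approaching $\underline{a}$ and above $\theta$ along one approaching $\overline{a}$. Pass to subsequences so that for each $n$ both $S_n,R_n\ge n$. By continuity and the intermediate value theorem, there is a $T_n$ between $S_n$ and $R_n$ with $a(T_n)=\theta$. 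Then $T_n\ge n\to\infty$ and $a(T_n)=\theta$ for all $n$.

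The main point needing care is the continuity of the projector $P_{\Phi(t,0;x_0)V}$ in $t$, which is what makes the intermediate value argument valid. The remaining steps are routine; in discrete time the analogous step (Lemma \ref{lem2:accu}) instead uses that successive averages differ by $\mathcal{O}(1/n)$. Alternatively, one can use that $|a(T)-a(T')|\le 2C|T-T'|/T$ for $T\le T'$, which gives the same intermediate value argument.
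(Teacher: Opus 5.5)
Your proposal is correct and follows essentially the same route as the paper. The endpoints are handled via subsequential limits, and interior points via continuity of $T\mapsto\alpha_s^{\cont}(T,V;x_0)$ and the intermediate value theorem between large times where the average lies below and above $\theta$. The only additions are the easy converse direction and an explicit justification of continuity, both of which the paper treats as clear.
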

  \begin{proof} If $\theta$ is one of the endpoints of $I_s^{\cont}(V;x_0)$, then the assertion is clear. Otherwise,
    we find for every $n \in \N$ some $T_-(n)\ge n$ with $\alpha_s^{\cont}(T_-(n),V;x_0) < \theta$.
    And there exist numbers $T_+(n) \ge T_-(n)$ with $\alpha_s^{\cont}(T_+(n),V;x_0) > \theta$. The function $\alpha_s^{\cont}(T,V;x_0)$
    is continuous w.r.t.\ $T$; hence, there exist $T_n \in [T_-(n),T_+(n)]$ with $\alpha_s^{\cont}(T_n,V;x_0) = \theta$ by the intermediate
    value theorem.
    Since $T_n \to \infty$, this proves our assertion.
  \end{proof}

The integral expression in \eqref{eq3:defcont} was obtained  
in \cite[Section 4]{behu24a} as the limit
\begin{equation*} \alpha_s^{\cont}(T,V;x_0)=  \lim_{h \to 0} \alpha_s^{\step}(h,T,V;x_0)
\end{equation*}
of angular averages  $\alpha_s^{\step}$ which are formed with a stepsize $h=\frac{T}{N}$ as follows:
\begin{equation} \label{eq3:limtheta}
      \alpha_s^{\step}(h,T,V;x_0)  =   \frac{1}{T} \sum_{j=1}^N \ang(\Phi((j-1)h,0;x_0)V,\Phi(jh,0;x_0)V), \quad h=\frac{T}{N}.
\end{equation}
Note that the term $\frac{1}{T}= \frac{1}{hN}$ in \eqref{eq3:limtheta} carries an additional factor $\frac{1}{h}$ when compared to
\eqref{def:alpha}. The reason is that the quantity to be averaged over time is the rate of change of the principal angles per unit time, rather than
the principal angles themselves. In \eqref{eq3:limtheta} the time unit is $h$, whereas 
the discrete formula \eqref{def:alpha} tacitly assumes that one step corresponds to a time unit of length $1$.
In Section \ref{sec4.4} below we provide a precise estimate of the error $|\alpha_s^{\cont}(T,V;x_0)-\alpha_s^{\step}(h,T,V;x_0)|$.
This will again justify that the sum in \eqref{eq3:limtheta} is scaled correctly.

\subsection{The general algorithm for flows}\label{sec4.2}
Let $T>0$ be the length of the time interval and let $h = \frac T N$
be the step size. We use the multi-index notation from Section
\ref{sec3.2}.
\begin{algorithm}
\caption{Angular maps for $s \in \{1,2\}$ and general $d$ \label{alg6}}
\textcolor{header1}{
\begin{algorithmic}[1] 
\For {$\bi\in\cL^d$}
\State Choose the midpoint $x_0 \in \cB_{\bi}$
\For {$n = 1,\dots,N$}
\State $x_{n} = RK(f,h,x_{n-1})$
\State $A_{n-1} = \frac {\partial RK}{\partial x}(f,h,x_{n-1})$
\EndFor
\State Compute the angular spectral values
$\Sigma_{s,N}^{\disc,\min}$ and $\Sigma_{s,N}^{\disc,\max}$
for the linear difference equation
$u_{n+1}=A_nu_n$, $n=0,\dots,N-1$,
using the algorithm from \cite[Section 4.2]{behu24b}
with buffer intervals of length $B$
\State Normalize $\Sigma_s^{\RK, \min} \coloneqq h^{-1} \Sigma_{s,N}^{\disc, \min}$ and
$\Sigma_s^{\RK, \max} \coloneqq h^{-1} \Sigma_{s,N}^{\disc, \max}$ 
\State Color the box $\cB_{\bi}$  according to
$\Sigma_s^{\RK,\min}$ and $\Sigma_s^{\RK,\max}$, respectively
\EndFor
\end{algorithmic}}
\end{algorithm}
 
In Algorithm \ref{alg6} we use 
$\mathrm{RK}(f,h,x_n)$ to denote one step of the fourth-order explicit 
  Runge--Kutta scheme with step size $h$, applied to
  \eqref{eq3:contnonlin} with initial value $x_n$.
For nonautonomous ODEs, the starting time $nh$ is understood
implicitly in the notation $\mathrm{RK}(f,h,x_{n})$. 
  Correspondingly, we consider the discrete system 
\[
x_{n}= \mathrm{RK}(f,h,x_{n-1}), \quad n=1,\ldots,N
\]
and the solution operator 
$\Phi_h(n,m;x_0)$, $n \ge m \ge 0$,  of the linearized system
    \begin{equation*} \label{eq3:RKlineq}     
      u_{n+1}= \frac{\partial \mathrm{RK}}{\partial x}(f,h,x_n) u_n,\quad
      n = 0,\dots,N-1.      
    \end{equation*}
Then we replace the flow $\Phi$ in \eqref{eq3:limtheta}
  by the solution operator $\Phi_h$, i.e., we compute
    \begin{equation*} \label{eq3:RKlinang}
      \alpha_s^{\mathrm{RK}}(h,T,V;x_0) = \frac{1}{T} \sum_{j=1}^N \ang(\Phi_h(j-1,0;x_0)V,\Phi_h(j,0;x_0)V).
    \end{equation*}
  
As in Definition \ref{def2:angspec} we define the $s$-dimensional 
\textbf{approximate angular spectrum} by
\begin{equation*} \label{eq2:defRK}
\Sigma_s^{\RK}(h,T;x_0) = \mathrm{cl}\left\{\alpha_s^{\RK}(h,T,V;x_0): V\in\cG(s,d)\right\}.
\end{equation*} 
Its extremal values are given by
\[
  \Sigma_s^{\RK, \min}(h,T;x_0) =\inf \ \Sigma_s^{\RK}(h,T;x_0), \quad
  \Sigma_s^{\RK, \max}(h,T;x_0) =\sup \ \Sigma_s^{\RK}(h,T;x_0),
\]
and
\[
\Sigma_s^{\RK, \diff}(h,T;x_0) = \Sigma_s^{\RK, \max}(h,T;x_0) -
\Sigma_s^{\RK, \min}(h,T;x_0).
\]
In practical computations the whole set $\cG(s,d)$ is replaced by approximate trace
spaces which are computed from an approximate dichotomy spectrum obtained
at finite time.  In the following we do not discuss the 
errors induced by these additional approximations.

\subsection{Application to an autonomous planar flow}
\label{sec4.3}
Our first example in continuous time is inspired by a nonautonomous
fluid flow model introduced in \cite{FLS10}.
We consider the two-dimensional autonomous ODE model, defined on
$\cB= [0,2\pi]\times [0,\pi]$ 
\begin{equation}\label{strom}
\frac \partial {\partial t}\begin{pmatrix}u_1\\u_2\end{pmatrix}
= \begin{pmatrix}
  \cos(u_2^2)-\sin(u_1)\cos(u_2) + 
  \frac {\sin(u_1)}{\left(\left(\sin(u_1)\sin(u_2)-\frac{u_2}2+\frac
  \pi 4\right)^2 + 1\right)^4}\\\cos(u_1)\sin(u_2)
\end{pmatrix}.
\end{equation}
The left and right boundaries of $\cB$ are identified, so that the
system is considered on $(\R/2\pi\Z)\times[0,\pi]$. 

Note that $\sin(0) = \sin(\pi) = 0$; thus solutions cannot leave $\cB$
through the upper and lower boundaries. 

We choose $N=2000$ and $h=\frac{1}{20}$ and apply five successive
steps of the classical fourth-order Runge--Kutta scheme with step size
$\frac{1}{100}$ to obtain a numerical approximation of the $h$-flow.  
In Algorithm \ref{alg6}, $\mathrm{RK}$ denotes this composition, and $\frac{\partial
\mathrm{RK}}{\partial x}$ its derivative. 
We then apply this algorithm on an $L\times L$ grid,
with $L=10^3$. Numerical tests indicate that for almost all initial points, the
only spectral fiber is the entire space $\R^2$. Thus, we omit the
computationally expensive calculation of the dichotomy spectrum and its
corresponding trace spaces 
and solve only optimization problems in line $6$ of Algorithm \ref{alg6}.
Figure \ref{Fstrom1} shows the resulting angular maps in the upper
row and corresponding histograms, using the same color scheme, in the
lower row. Figure \ref{Fstrom2} displays the pointwise difference between the maximal and
minimal angular spectral values shown in Figure \ref{Fstrom1}.
A logarithmic color scale is used to make small differences more visible. 
 
\begin{figure}[hbt]
\begin{center}
      \includegraphics[width=0.99\textwidth]{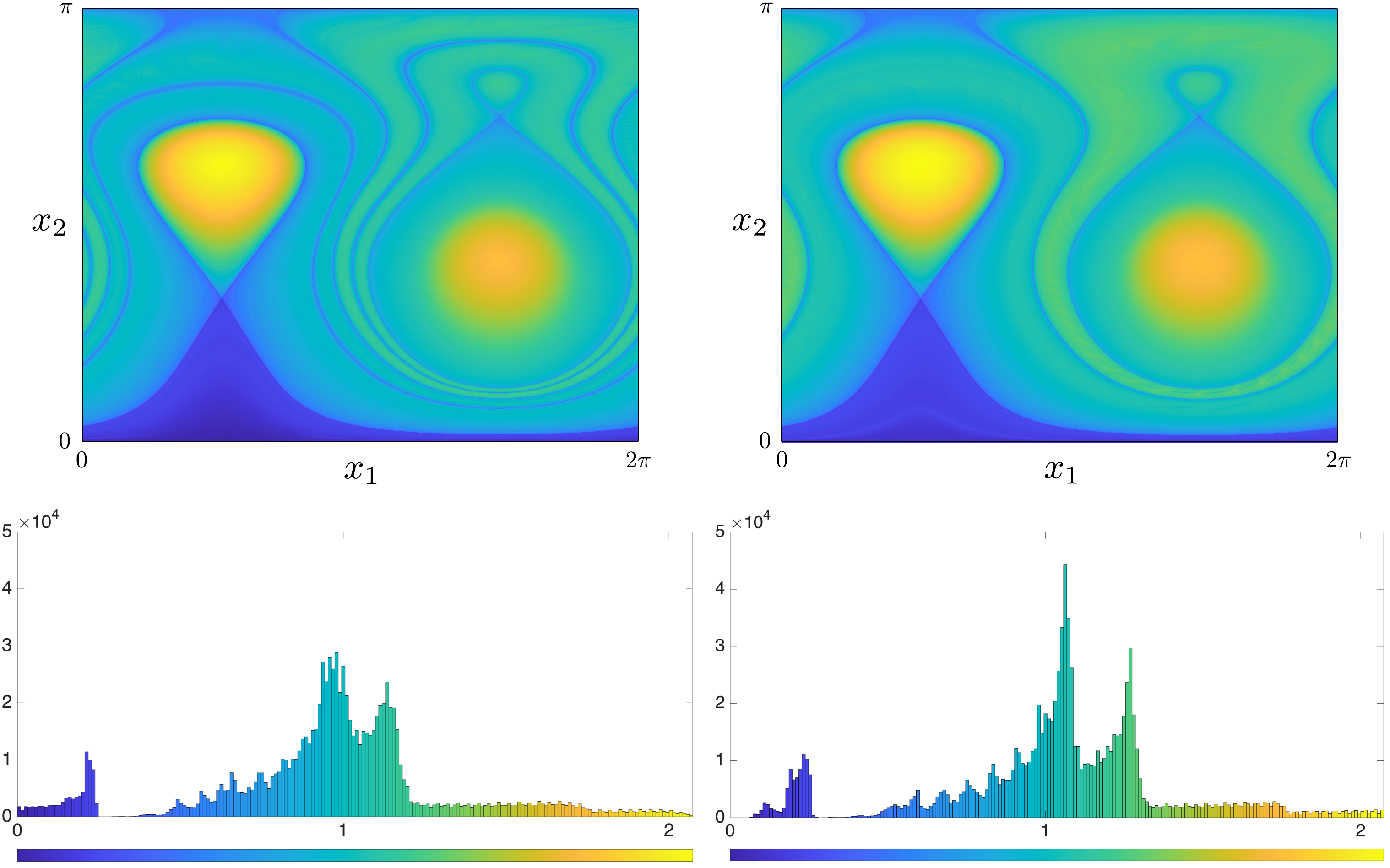}
\end{center}
\caption{Angular maps of the minimum (left) and maximum (right)
  angular spectral values for \eqref{strom}. 
  The corresponding histograms are shown in the
  lower row.\label{Fstrom1}} 
\end{figure}

\begin{figure}[hbt]
\begin{center}
\includegraphics[width=0.49\textwidth]{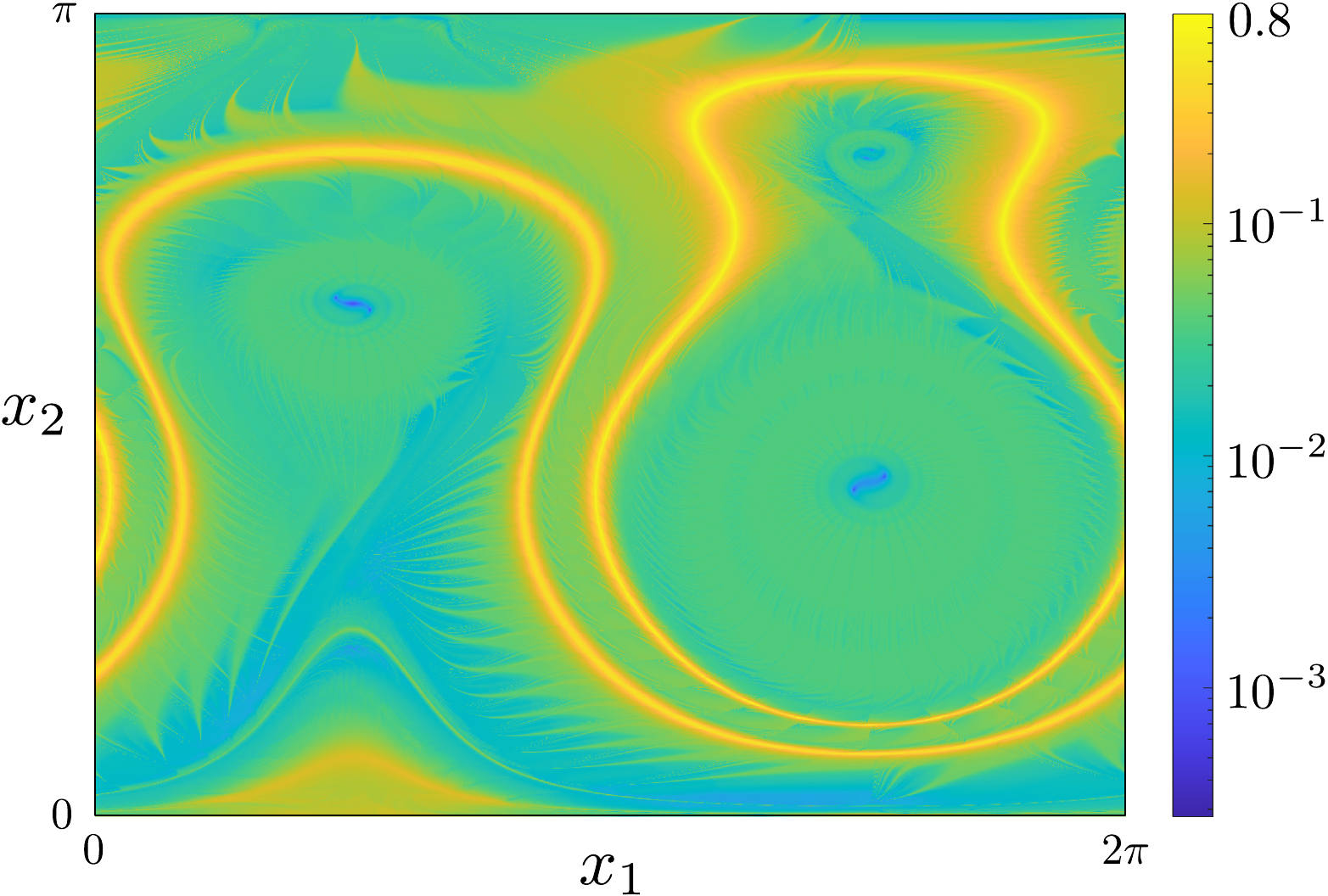}
\end{center}
\caption{Pointwise difference between the maximal and minimal angular
  spectral values shown in Figure \ref{Fstrom1}. The colors are
  displayed on a logarithmic scale. \label{Fstrom2}}  
\end{figure}

\subsection{Application to the $3D$-Lorenz system}
\label{sec_lorenz}
Our second example is the well-known three-dimensional Lorenz system 
\begin{equation}\label{lorenz}
\frac \partial {\partial t}
\begin{pmatrix}
u_1\\u_2\\u_3
\end{pmatrix}
=
\begin{pmatrix}
\sigma(u_2-u_1)\\\rho u_1 - u_2 - u_1 u_3\\ u_1 u_2 - \beta u_3
\end{pmatrix}
\quad \text{with parameters}\quad \sigma = 10,\ \rho = 28,\ \beta = \tfrac 83.
\end{equation}
We first take the $h$-flow of the Lorenz system with $h = \frac 1{20}$
and compute an orbit that is close to the Lorenz attractor. 
For the initial conditions tested, the computed finite-time spectral
intervals of the dichotomy spectrum show little dependence on the
initial point. Our numerical procedure yields three separated spectral
intervals near $0.48$, $1$, and $1.05$. 

Next, we take a grid of size $L^3$ with $L=100$ in $\cB =
[-20,30]\times[-20,30]\times[-10,50]$. For all initial conditions
chosen at the midpoints of the grid cells, the computed orbits
eventually approach the Lorenz attractor. Thus, we observe only
one-dimensional spectral fibers in the Lorenz system. 
This allows us to avoid the costly computation of the dichotomy
spectrum and its associated trace spaces at every grid point.
Instead, as in Section \ref{sec3.1a}, we approximate the relevant
trace spaces by combining forward and backward iterations.  
Figure \ref{FL1} shows the three-dimensional angular maps for $N=200$
and $N=2000$, together with the corresponding histograms.

\begin{figure}[hbt]
\begin{center}
\includegraphics[width=0.99\textwidth]{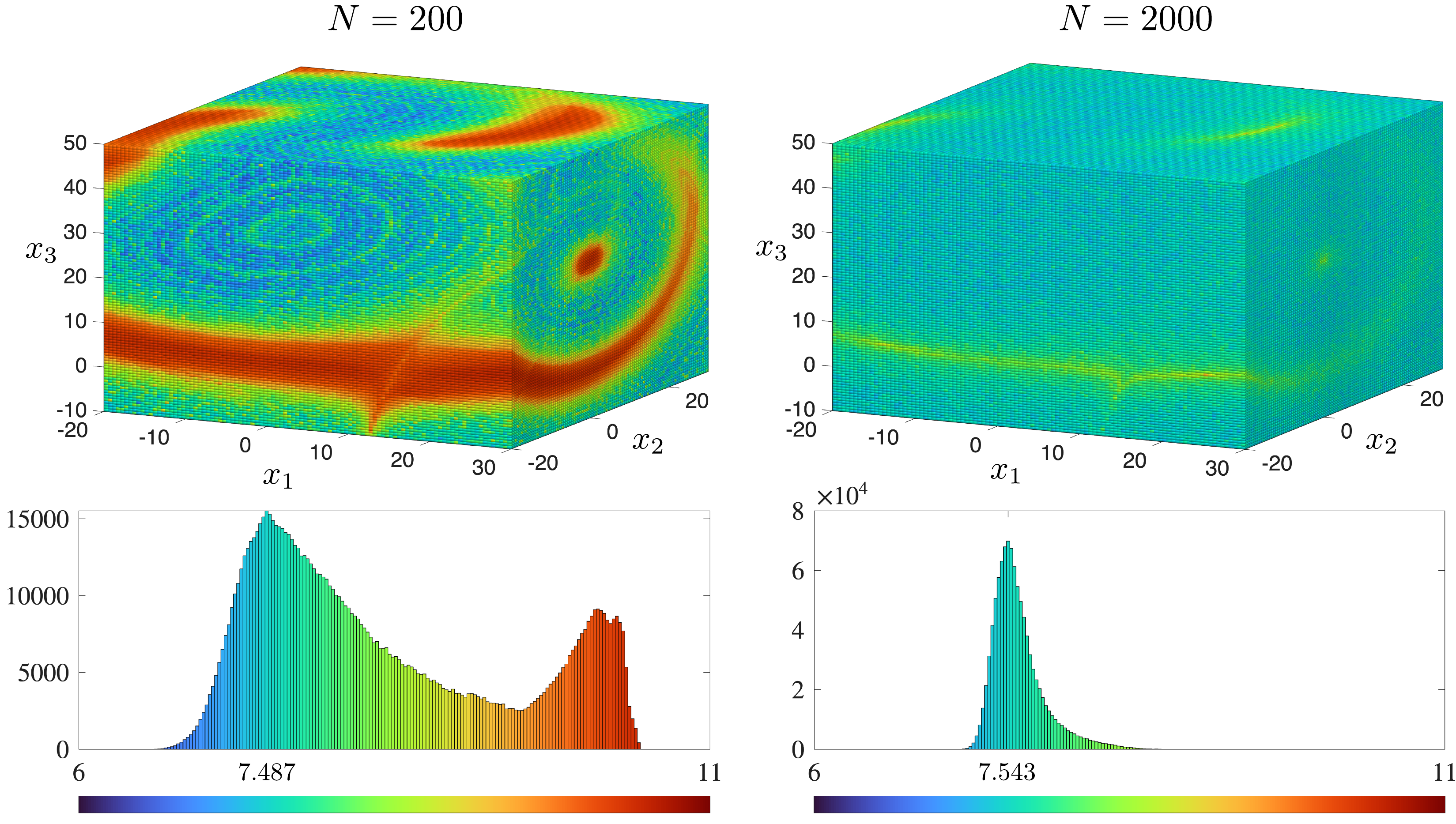}      
\end{center}
\caption{Spectral angular values for $s=1$ computed by forward
  iteration of the $h$-flow of
  the Lorenz system \eqref{lorenz}, with $h=\frac 1{20}$. 
  The results for $N=200$ and
  $N=2000$ are shown on the  left and right, respectively, together
  with the corresponding histograms.   \label{FL1}} 
\end{figure}

For $N=2000$, we compute all angular spectral values for $s=1$ and
$s=2$. The corresponding histograms are shown in Figure 4.4. We
present only the histograms and label each peak with the trace space
in which the corresponding angular spectral value is attained. 

\begin{figure}[hbt]
\begin{center}
\begin{tabular}{cc}
$s = 1$ & $s=2$\\[-1mm]
\includegraphics[width=0.47\textwidth]{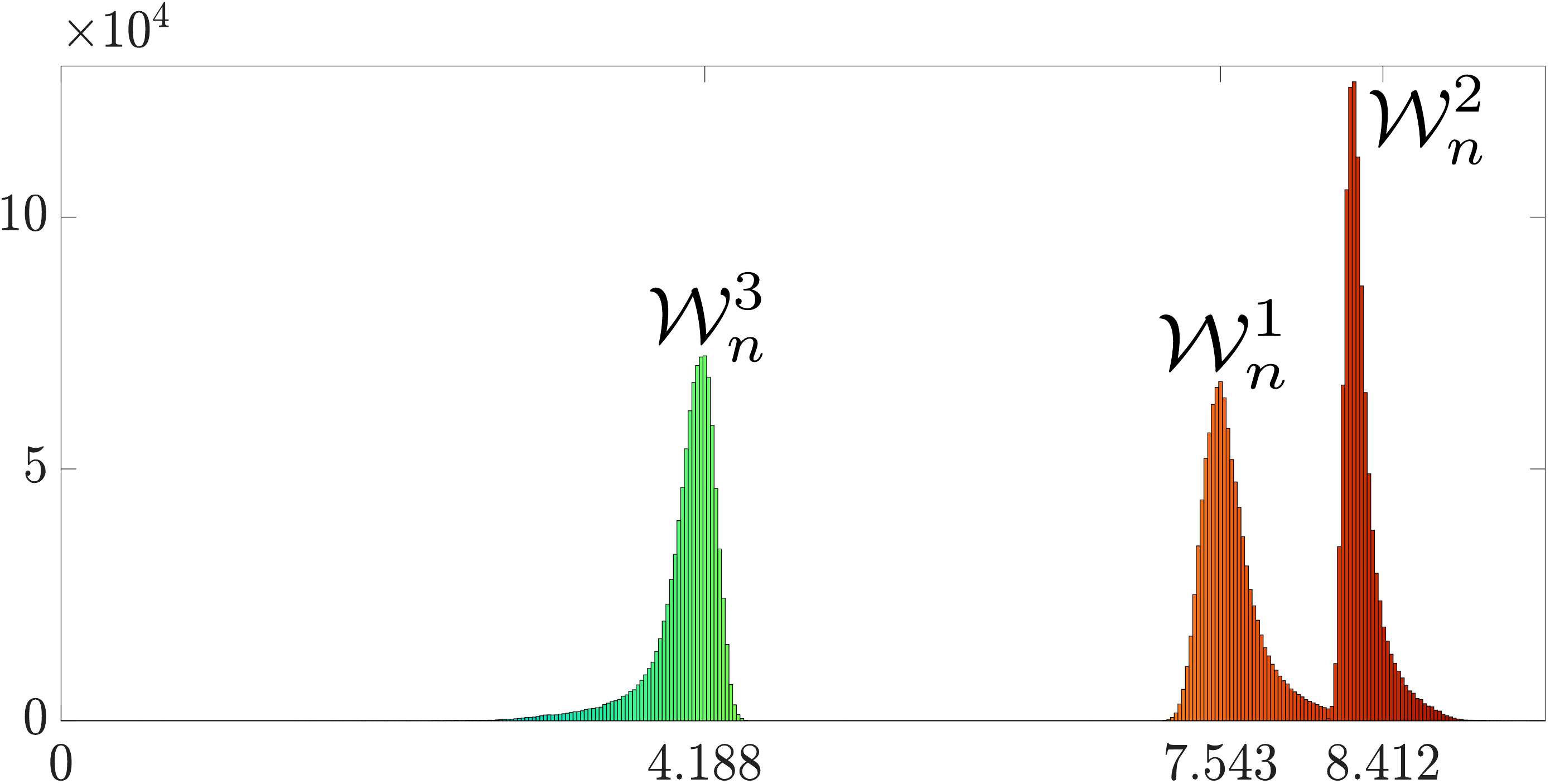}&
\includegraphics[width=0.46\textwidth]{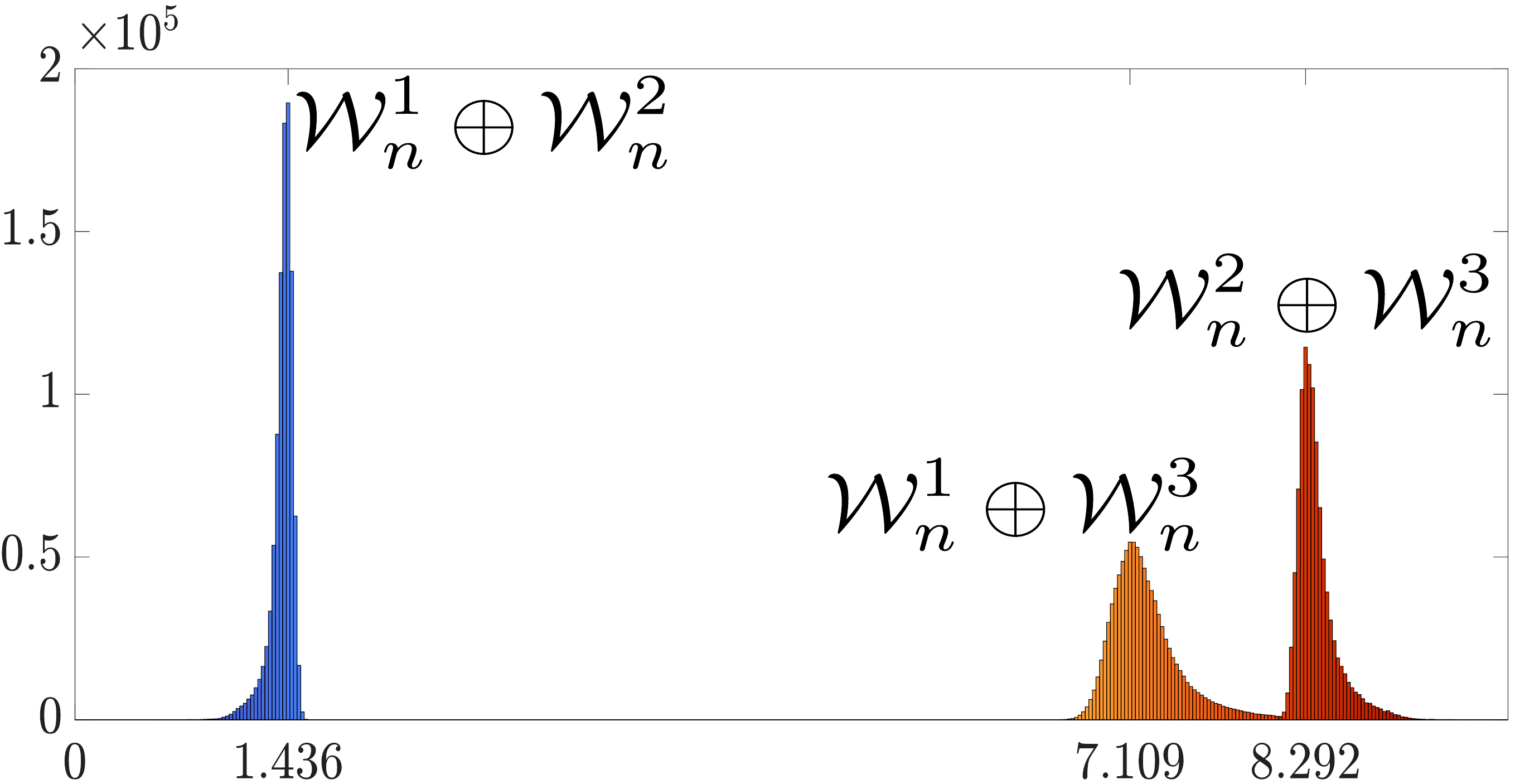}
\end{tabular}      
\end{center}
\caption{Histograms of the angular map for the three-dimensional
  Lorenz system, computed with  $N=2000$, for $s=1$ (left) and
  $s=2$ (right). Each peak is  labeled with its associated trace space.  \label{FL2}}   
\end{figure}
 
\subsection{Convergence of angular spectra for time-discretized systems} 
\label{sec4.4}
In the following we analyze the approximation error
 \begin{equation*} \label{eq3:errorquad}
   e(h,T,V)= | \alpha_s^{\step}(h,T,V)- \alpha_s^{\cont}(T,V)|
 \end{equation*}
 of the terms in \eqref{eq3:limtheta} and \eqref{eq3:defcont}. To simplify notation, we omit the
 dependence on the initial data $x_0$, i.e., we write
 \begin{align*}
   \alpha_s^{\step}(h,T,V)= \alpha_s^{\step}(h,T,V;x_0), \quad  \alpha_s^{\cont}(T,V)=\alpha_s^{\cont}(T,V;x_0).
   \end{align*}
  In Remark \ref{rem3:x0} we will indicate when all the estimates hold uniformly in $x_0\in \cB$.
 Our goal is to prove the following theorem.
 \begin{theorem} \label{th3:unifapproxT}
   Let $A\in C^1_b([0,\infty),\R^{d,d})$, i.e., $A(\cdot)$ and $\dot{A}(\cdot)$ are continuous and bounded.
     Then there exist constants $C,h_0>0$ such that the estimate
          \begin{equation*} \label{eq3:esterrh}
e(h,T,V) \le C h
          \end{equation*}
          holds for all $V \in \cG(s,d)$, for all $T>0$ and for all $h = \frac{T}{N} \le h_0$, where $N \in \N$.
 \end{theorem}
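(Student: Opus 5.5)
The plan is to reduce the global estimate to a uniform local estimate for one step. Write $V(t)=\Phi(t,0)V$, $P(t)=P_{V(t)}$ and $B(t)=(I_d-P(t))A(t)P(t)$. Since $\alpha_s^{\step}$ is a sum of $N$ one-step angles divided by $T$, and $\alpha_s^{\cont}$ is an integral over $N$ subintervals of length $h$ divided by $T$, I will show that
\begin{equation*}
\Big|\ang\big(V((j-1)h),V(jh)\big)-\int_{(j-1)h}^{jh}\|B(t)\|\,dt\Big|\le C_1h^2
\end{equation*}
holds uniformly in $j$, $V\in\cG(s,d)$ and $h\le h_0$. Summing over $j$ and dividing by $T=Nh$ then gives $e(h,T,V)\le \frac{1}{T}NC_1h^2=C_1h$, which is the claim.

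\textbf{Step 1: derivative of the projector.} Differentiating $P^2=P$ and $P=P^\top$, together with the invariance $(I_d-P)\dot V=(I_d-P)AV$ of the evolving subspace, gives the Riccati-type formula
\begin{equation*}
\dot P=(I_d-P)AP+PA^\top(I_d-P)=B+B^\top.
\end{equation*}
One way to derive it is to write $P=Y(Y^\top Y)^{-1}Y^\top$ with $\dot Y=AY$. Since $B$ maps $\range(P)$ into $\range(P)^\perp$ and vanishes on $\range(P)^\perp$, the matrix $B+B^\top$ is block off-diagonal in the decomposition $\range(P)\oplus\range(P)^\perp$. Hence $\|\dot P\|=\|B\|$, and the integrand in \eqref{eq3:defcont} equals $\|\dot P(t)\|$. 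Differentiating once more yields $\ddot P$ as a polynomial in $P$, $A$, $\dot A$ and $\dot P$. Because $\|P\|\le1$ and $A,\dot A$ are bounded, we obtain $\|\dot P\|\le 2\|A\|_\infty$ and $\|\ddot P\|\le M$, with $M$ depending only on $\|A\|_\infty$ and $\|\dot A\|_\infty$ and not on $V$ or $t$. This uniformity in $V$ is the crux of the argument. I expect the bookkeeping in this step to be the main (though routine) obstacle, in particular checking carefully that no factor like $\|\Phi\|$ or $\|(Y^\top Y)^{-1}\|$ survives in the bounds. It does not survive, because the formula for $\dot P$ is closed in $P$ and $A$.

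\textbf{Step 2: local comparison.} By Taylor expansion, $\|P(t+h)-P(t)-h\dot P(t)\|\le \frac{M}{2}h^2$. Combined with \eqref{eq2:charsin} this gives
\begin{equation*}
\big|\sin\ang(V(t),V(t+h))-h\|\dot P(t)\|\big|\le \tfrac{M}{2}h^2 .
\end{equation*}
Choose $h_0$ so that $x:=\sin\ang(V(t),V(t+h))\le 2\|A\|_\infty h+\frac{M}{2}h^2\le\frac12$. Then $|\arcsin x-x|\le c\,x^3=O(h^3)$, so $|\ang(V(t),V(t+h))-h\|\dot P(t)\||\le C_2h^2$. Moreover, $t\mapsto\|\dot P(t)\|$ is Lipschitz with constant $M$, hence $|\int_t^{t+h}\|\dot P(\tau)\|\,d\tau-h\|\dot P(t)\||\le\frac{M}{2}h^2$. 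The triangle inequality gives the local estimate with $C_1=C_2+\frac M2$, and summation as described above completes the proof. All constants depend only on the $C^1_b$ bounds of $A$, which is what later makes the estimate uniform in $x_0\in\cB$.
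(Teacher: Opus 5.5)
Your proposal is correct and follows essentially the paper's route: a uniform $\cO(h^2)$ one-step comparison between the angle and $h\|(I_d-P)AP\|$ (the paper's Proposition \ref{prop3:estdiffq}, which rests on the norm identity of Lemma \ref{lem3:spectral} that you obtain from the block off-diagonal structure), plus an $\cO(h)$ Lipschitz bound on the integrand, summed over the $N$ steps. The only difference is technical. You get the first-order expansion and a uniform second-order remainder from the closed Riccati equation $\dot P=(I_d-P)AP+PA^\top(I_d-P)$ via a bound on $\ddot P$. The paper instead Taylor-expands $W(h)=\Phi(\tau+h,\tau)W_0$ for an orthonormal $W_0$ and inserts this into $W(h)(W(h)^\top W(h))^{-1}W(h)^\top$; both give the required uniformity in $\tau$ and $V$.
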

For the proof we use a linear algebra lemma inspired by \cite[Theorem 2.5.1]{GvL2013}.
 \begin{lemma} \label{lem3:spectral}
   Let $A\in \R^{d,d}$, let $P$ be an orthogonal projector in $\R^d$ and let $\|\cdot\|$ be the spectral
   norm. Then the following equality holds
   \begin{equation} \label{eq3:specequal}
     \|(I-P)A P + PA^{\top}(I-P)\| = \| (I-P)AP\|.
   \end{equation}
 \end{lemma}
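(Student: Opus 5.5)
The plan is to set $B=(I-P)AP$ and $M=B+B^{\top}=(I-P)AP+PA^{\top}(I-P)$, and to compare $M^{\top}M=M^2$ with $B^{\top}B$. Since $P$ and $I-P$ are complementary orthogonal projectors, we have $P(I-P)=0$. This gives $B^2=(I-P)AP(I-P)AP=0$ and likewise $(B^{\top})^2=0$. Hence
\begin{equation*}
M^2 = B B^{\top} + B^{\top} B = (I-P)APA^{\top}(I-P) + PA^{\top}(I-P)AP .
\end{equation*}
The two summands act on orthogonal complementary subspaces: $BB^{\top}$ maps into $\range(I-P)$ and vanishes on $\range(P)$, while $B^{\top}B$ maps into $\range(P)$ and vanishes on $\range(I-P)$. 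So $M^2$ is block diagonal with respect to the orthogonal decomposition $\R^d=\range(P)\oplus\range(I-P)$.

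The key step is to read off the norm from this block structure. For a block diagonal symmetric positive semidefinite matrix, the spectral norm is the maximum of the norms of the blocks, so
\begin{equation*}
\|M\|^2=\|M^2\|=\max\{\|BB^{\top}\|,\|B^{\top}B\|\}.
\end{equation*}
Moreover, $\|BB^{\top}\|=\|B^{\top}B\|=\|B\|^2$, since $BB^{\top}$ and $B^{\top}B$ share their nonzero eigenvalues, namely the squared singular values of $B$. Therefore $\|M\|=\|B\|$, which is \eqref{eq3:specequal}.

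An alternative, cleaner route I would mention is to choose orthonormal bases adapted to $\range(P)$ and $\range(I-P)$. In these coordinates $B=\begin{pmatrix}0&0\\ X&0\end{pmatrix}$ and $M=\begin{pmatrix}0&X^{\top}\\ X&0\end{pmatrix}$. Since $M$ is symmetric with eigenvalues $\pm\sigma_i(X)$ together with zeros, $\|M\|=\|X\|=\|B\|$. This is the argument in the spirit of \cite[Theorem 2.5.1]{GvL2013}.

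There is no real obstacle here. The only point needing care is justifying that the spectral norm of the block diagonal operator is the maximum of the block norms. This follows from the orthogonality of the decomposition: for $x=x_1+x_2$ with $x_1\in\range(P)$ and $x_2\in\range(I-P)$, one has $\|M^2x\|^2=\|B^{\top}Bx_1\|^2+\|BB^{\top}x_2\|^2$ and $\|x\|^2=\|x_1\|^2+\|x_2\|^2$.
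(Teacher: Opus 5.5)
Your proof is correct, but it takes a different route from the paper. The paper never forms $M^2$. It splits $Mx=(I-P)APx+PA^{\top}(I-P)x$ into its orthogonal components in $\range(I-P)$ and $\range(P)$ and estimates $\|Mx\|^2\le\|B\|^2\|Px\|^2+\|B^{\top}\|^2\|(I-P)x\|^2=\|B\|^2\|x\|^2$, which gives $\|M\|\le\|B\|$. For the reverse inequality it picks a unit vector $x$ with $\|Bx\|=\|B\|$ and notes $MPx=BPx=Bx$, so $\|B\|\le\|M\|\,\|Px\|\le\|M\|$.

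Your argument instead squares $M$ and uses the nilpotency $B^2=0$ to make $M^2=BB^{\top}+B^{\top}B$ block diagonal. It then needs two extra spectral facts: $\|M\|^2=\|M^2\|$ for symmetric $M$, and $\|BB^{\top}\|=\|B^{\top}B\|=\|B\|^2$. The Pythagorean splitting you use to justify the block-maximum formula for $M^2$ applies verbatim to $Mx=Bx_1+B^{\top}x_2$ itself, and that is exactly the paper's upper bound. So the paper's version is shorter and needs nothing beyond $\|B^{\top}\|=\|B\|$.

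What your approach buys, especially in the adapted-basis form, is more information than the norm identity. $M$ is orthogonally similar to $\begin{pmatrix}0&X^{\top}\\ X&0\end{pmatrix}$, so its whole spectrum is $\{\pm\sigma_i(X)\}$ together with zeros.
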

 \begin{proof}
   Recall $\|B\|=\|B^{\top}\|$ for $B \in \R^{d,d}$. Further, by the orthogonality of $P$,
   \begin{equation*}
     \|(I-P)x + Py \|^2 = \|(I-P)x\|^2 + \| Py\|^2, \quad \forall \ x,y \in \R^d.
   \end{equation*}
   Using this and $P^{\top}=P$,  we obtain for all $x \in \R^d$
   \begin{equation*}
     \begin{aligned}
       \|((I-P)AP &+PA^{\top}(I-P))x\|^2 = \|(I-P)APx\|^2 + \|PA^{\top}(I-P)x\|^2 \\
       & \le \|(I-P)AP\|^2 \|Px\|^2 + \|PA^{\top}(I-P)\|^2 \|(I-P)x\|^2 \\
       & = \|(I-P)AP\|^2(\|Px\|^2+ \|(I-P)x\|^2) = \|(I-P)AP\|^2 \|x\|^2;
     \end{aligned}
   \end{equation*}
   hence, $\|(I-P)AP + PA^{\top}(I-P)\| \le \|(I-P)A P\|$. Next, there exists some $x \in \R^d$ with $\|x\|=1$
   and $\|(I-P)APx\| = \|(I-P)AP\|$. Then we have $\|Px\| \le1$ and
   \begin{equation*}
     \begin{aligned}
       \|(I-P)AP\| & = \|(I-P)AP(Px)\|= \|((I-P)AP + P A^{\top}(I-P))(Px) \|\\
       & \le \|(I-P)AP+ PA^{\top}(I-P)\|\|Px\| \le \|(I-P)AP + P A^{\top}(I-P)\|.
     \end{aligned}
   \end{equation*}
  Combining both estimates leads to the equality in \eqref{eq3:specequal}.
   \end{proof}
 The following proposition sharpens the result \cite[Theorem 3.4]{behu24a} on derivatives of
 principal angles and simplifies the proof.
 \begin{proposition} \label{prop3:estdiffq}
   Let $A \in C^1_b([0,\infty), \R^{d,d})$. Then there exist constants $C,h_0>0$ such that for all
     $\tau \ge 0$, $V\in \cG(s,d)$ and $0< h \le h_0$,
     \begin{equation} \label{eq3:estdiffq}
       \left| \frac{1}{h} \ang(\Phi(\tau+h,\tau)V,V)- \|(I-P_V)A(\tau)P_V\| \right| 
       \le C h,
     \end{equation}
where $P_V$ denotes the orthogonal projector onto $V$, $\|\cdot\|$ is the spectral norm, and $\Phi$ is the
solution operator of \eqref{eq3:contvari}.
 \end{proposition}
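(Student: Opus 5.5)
We will work with $\sin\ang(V,W)=\|P_V-P_W\|$ from \eqref{eq2:charsin}. The plan is to expand the projector onto $W_h:=\Phi(\tau+h,\tau)V$ to first order in $h$, with a remainder bounded uniformly in $\tau$ and $V$. Lemma \ref{lem3:spectral} then identifies the norm of the first-order term.

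\textbf{Step 1: the solution operator and a basis of $W_h$.} Since $\|A\|_\infty$ and $\|\dot A\|_\infty$ are bounded, Taylor expansion of the ODE gives
\[
\Phi(\tau+h,\tau)=I+hA(\tau)+R_1(h,\tau),\qquad \|R_1\|\le c_1h^2 ,
\]
uniformly in $\tau\ge0$ (this step uses only the bound on $A$). Let the columns of $V_B\in\R^{d,s}$ form an orthonormal basis of $V$. Then $Y_h=\Phi(\tau+h,\tau)V_B$ is a basis of $W_h$, and
\[
Y_h^\top Y_h=I_s+h\,V_B^\top\big(A+A^\top\big)V_B+\cO(h^2).
\]
For $h\le h_0$ this matrix is close to $I_s$, so $(Y_h^\top Y_h)^{-1}$ exists and has a uniform expansion.

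\textbf{Step 2: first-order expansion of the projector.} Insert these expansions into $P_{W_h}=Y_h(Y_h^\top Y_h)^{-1}Y_h^\top$. Using $P=P_V=V_BV_B^\top$, this gives
\[
P_{W_h}=P+h\big(AP+PA^\top-PAP-PA^\top P\big)+R_2 ,\qquad \|R_2\|\le c_2h^2 .
\]
The first-order bracket equals $(I-P)AP+PA^\top(I-P)$. Every constant depends only on $\|A\|_\infty$, $\|\dot A\|_\infty$ and $d$, never on $V$ or $\tau$, because $V_B$ is orthonormal.

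\textbf{Step 3: from projector distance to the angle.} Lemma \ref{lem3:spectral} applied to the bracket gives
\[
\big|\sin\ang(W_h,V)-h\|(I-P)A(\tau)P\|\big|\le c_2h^2 .
\]
In particular $\sin\ang(W_h,V)\le c_3h$, so the angle itself is at most $\tfrac\pi2 c_3 h$. Since $0\le x-\sin x\le x^3/6$, replacing $\sin\ang$ by $\ang$ costs only $\cO(h^3)$. Dividing by $h$ yields \eqref{eq3:estdiffq} with $C=c_2+\cO(h_0^2)$.

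The main obstacle is the uniformity of the $\cO(h^2)$ remainders in Step 2. One must check that the inverse $(Y_h^\top Y_h)^{-1}$ is controlled by a Neumann series whose bound is independent of $V$, which holds once $h_0\,\|A\|_\infty$ is small. One must also track the second-order terms carefully so that the constant $C$ does not depend on $V$.
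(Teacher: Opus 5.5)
Your proposal is correct and follows essentially the paper's proof: you transport an orthonormal basis of $V$, expand $P_{\Phi(\tau+h,\tau)V}=P_V+h[(I-P_V)A(\tau)P_V+P_VA(\tau)^{\top}(I-P_V)]+\mathcal{O}(h^2)$ uniformly in $\tau$ and $V$, and apply Lemma~\ref{lem3:spectral} together with \eqref{eq2:charsin}; the only variation is the passage from $\sin$ to the angle, where you use $x-\sin x\le x^3/6$ and the paper uses $\arcsin y=y+\mathcal{O}(y^2)$. One small correction: your parenthetical in Step~1 is misleading, because the uniform $\mathcal{O}(h^2)$ remainder does need the bound on $\dot A$, through the term $\int_0^h (A(\tau+t)-A(\tau))\,dt$, and not only the bound on $A$.
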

 \begin{proof}
   Choose an orthonormal basis of $V$, i.e., some $W_0 \in \R^{d,s}$ with $\range(W_0)=V$ and $W_0^{\top}W_0= I_s$.
   For a fixed $\tau \ge 0$, define $W(t)= \Phi(\tau+t,\tau)W_0$ so that $\range(W(t))= \Phi(\tau+t,\tau)V$ holds.
   The matrix $W(t)$ and its derivative solve the initial value problems
   \begin{align*}
     \dot{W}(t) &=A(t+\tau) W(t), \quad  W(0)= W_0, \\
     \ddot{W}(t) & = \left[ \dot{A}(t+\tau)+ A(t+\tau)^2 \right] W(t), \quad \dot{W}(0)=A(\tau) W_0 =: \dot{W}_0.
   \end{align*}
   Let $\|A(\tau)\| + \|\dot{A}(\tau)\| \le C$ for all $\tau \ge 0$. Then a Gronwall estimate yields 
   $\|W(t)\| \le e^{tC}\|W_0\| \le e^C$ for $0\le t \le 1$ as well as 
   \begin{equation*}
     \|\dot{W}(t)\| \le C e^C, \quad \|\ddot{W}(t)\| \le C(1+C) e^C =: C_{\star}.
   \end{equation*}
   By a Taylor expansion we obtain for $0 \le h \le 1$
   \begin{align*}
     \|W(h)-W_0 - h \dot{W}_0\|= \| h^2\int_0^1(1-\eta)\ddot{W}(\eta h) d\eta \| \le \frac{C_{\star}}{2} h^2.
   \end{align*}
   We write this in the usual way as $W(h) = W_0 + h \dot{W}_0 + \mathcal{O}(h^2)$, where $\mathcal{O}(h^2)$ holds
   with a constant that is uniform in $\tau \ge0$ and $V \in \cG(s,d)$. Correspondingly, we have
   $W(h)^{\top}= W_0^{\top} +  h \dot{W}^{\top}_0 + \mathcal{O}(h^2)$. This implies
     \begin{equation*} W(h)^{\top}W(h)= I_s + h(W_0^{\top}\dot{W}_0 + \dot{W}_0^{\top}W_0) +\mathcal{O}(h^2).
     \end{equation*}
     Hence, there exists a $0<h_0 \le 1$, independent of $\tau \ge 0$ and $V \in \cG(s,d)$, such that
     the inverse exists for $0\le h \le h_0$ and satisfies
       \begin{equation*} \left(W(h)^{\top}W(h)\right)^{-1}= I_s - h(W_0^{\top}\dot{W}_0 + \dot{W}_0^{\top}W_0) +\mathcal{O}(h^2).
     \end{equation*}
   The orthogonal projectors associated to $V$ and to $\Phi(\tau+h,\tau)V$, $0 \le h \le h_0$, are
   \begin{align*}
     P_V  &  = W_0 W_0^{\top}, \\
     P_{\Phi(\tau+h,\tau)V}& = W(h) \left(W(h)^{\top}W(h)\right)^{-1}W(h)^{\top}.
   \end{align*}
   Using the previous expansions, we obtain
   \begin{equation} \label{eq5:expandnextV}
   \begin{aligned}
     P_{\Phi(\tau+h,\tau)V}& = W_0 W_0^{\top}+ h \left[ \dot{W}_0W_0^{\top} - W_0(W_0^{\top} \dot{W}_0 + \dot{W}_0^{\top} W_0)
       W_0^{\top}+ W_0 \dot{W}_0^{\top} \right] + \mathcal{O}(h^2)\\
     & = P_V+ h \left[ (I-W_0W_0^{\top})\dot{W}_0W_0^{\top} + W_0 \dot{W}_0^{\top}(I-W_0W_0^{\top}) \right] + \mathcal{O}(h^2)\\
     & = P_V + h \left[ (I-P_V)A(\tau)P_V + P_V A(\tau)^{\top}(I-P_V) \right] + \mathcal{O}(h^2).
   \end{aligned}
   \end{equation}
   For later reference, let us note that this implies
   \begin{equation} \label{eq3:projhest}
     \|P_{\Phi(\tau+h,\tau)V} - P_V\| = \mathcal{O}(h).
   \end{equation}
   Let us  abbreviate $\varphi(h) = \ang(\Phi(\tau+h,\tau)V,V)$ in the following. With Lemma \ref{lem3:spectral}
   and formula \eqref{eq2:charsin} we obtain from the expansion above
   \begin{align*}
    & \left| \frac{1}{h}  \sin(\varphi(h))-\|(I-P_V)A(\tau) P_V \|  \right| \\
     & = \left| \frac{1}{h} \|P_{\Phi(\tau+h,\tau)V}-P_V\| - \|(I-P_V)A(\tau)P_V + P_VA(\tau)^{\top}(I-P_V) \| \right| \\
     & \le \left\| \frac{1}{h}(P_{\Phi(\tau+h,\tau)V}-P_V) - \left[(I-P_V)A(\tau)P_V + P_VA(\tau)^{\top}(I-P_V)\right] \right\|\\
     & = \mathcal{O}(h).
   \end{align*}
   Finally, using $\arcsin(y)= y + \mathcal{O}(y^2)$  and $\sin(\varphi(h))= \mathcal{O}(h)$, we end up
   with
   \begin{align*}
     \frac{\varphi(h)}{h}&= \frac{1}{h} \arcsin(\sin(\varphi(h)))= \frac{1}{h}\left( \sin(\varphi(h))+ \mathcal{O}(\sin(\varphi(h))^2)\right)
     \\
     & = \frac{1}{h}\left( \sin(\varphi(h))+ \mathcal{O}(h^2)\right) = \|(I-P_V)A(\tau)P_V\| + \mathcal{O}(h).
     \end{align*}
   \end{proof}
 \begin{proof}[Proof of Theorem \ref{th3:unifapproxT}]
   Let us abbreviate $V_{\tau}= \Phi(\tau,0)V$  and
   $E(\tau) = (I-P_{V_{\tau}})A(\tau) P_{V_{\tau}}$.
   With $T=N h$ we obtain by substituting  $\tau=(j-1+r)h$
      \begin{align*}
    & e(h,T,V) = \frac{1}{hN} \Big| \sum_{j=1}^N \ang(V_{(j-1)h},V_{jh}) - 
     \sum_{j=1}^N \int_{(j-1)h}^{jh} \|E(\tau) \| d\tau \Big| \\
     & =  \frac{1}{N} \Big| \frac{1}{h} \sum_{j=1}^N \ang(V_{(j-1)h},V_{jh}) - 
     \sum_{j=1}^N \int_{0}^{1} \|E((j-1+r)h)\| dr \Big|\\
     & \le \frac{1}{N}\left[ \sum_{j=1}^N \Big|\frac{1}{h} \ang(V_{(j-1)h},\Phi(jh,(j-1)h)V_{(j-1)h})-
       \|E((j-1)h) \| \Big| \right. \\
      & \left. + \sum_{j=1}^N \int_0^1  \|E((j-1)h)- E((j-1+r)h)\| dr \right].
        \end{align*}
   The terms in the first sum are bounded by $\mathcal{O}(h)$ due to \eqref{eq3:estdiffq} with
   $\tau=(j-1)h$, $V=V_{(j-1)h}$. Recall that all $\mathcal{O}$-estimates are uniform in $\tau$
   and $V$.
   The terms in the second sum are also $\mathcal{O}(h)$ due to \eqref{eq3:projhest} and the boundedness and
   Lipschitz boundedness of $A(\cdot)$:
   \begin{align*}
     \|E(\tau)-E(\tau+rh)\| & \le \|(I-P_{V_{\tau}})- (I-P_{V_{\tau+rh}})\| \|A(\tau) P_{V_{\tau}}\| \\
     & +\| I-P_{V_{\tau+rh}}\| \|A(\tau)-A(\tau+rh)\|\| P_{V_{\tau}}\| \\
     &+
     \| I-P_{V_{\tau+rh}}\| \|A(\tau+rh)\|\| P_{V_{\tau}}-P_{V_{\tau+rh}}\|\\
     & \le C h, \quad \forall\, \tau \ge 0, \, 0\le h \le h_0, \, 0 \le r \le 1.
   \end{align*}
   In view of the prefactor $\frac{1}{N}$ we finally obtain $e(h,T,V) = \mathcal{O}(h)$.
   \end{proof}
 
 As in \eqref{eq2:angrange} let us introduce the angular range generated by a subspace $V \in \cG(s,d)$ for a  fixed stepsize
 $h>0$:
 \begin{equation} \label{eq4:limindiv}
         I_s^{\step}(h,V) =
     \left[\varliminf_{N\to \infty} \alpha_s^{\step}(h,Nh,V) , \varlimsup_{N\to \infty}\alpha_s^{\step}(h,Nh,V) \right].
  \end{equation}
  
  The result of Theorem \ref{th3:unifapproxT} enables us to
  prove a strong approximation of the (outer) angular spectrum $\Sigma^{\cont}_s$  (see \eqref{eq3:defangspec})
  by its discrete counterpart
 \begin{equation*} \label{eq3:defangspeccont}
     \Sigma_s^{\step}(h) = \mathrm{cl}\Big[ \bigcup_{V \in \cG(s,d)} I_s^{\step}(h,V)\Big]
 \end{equation*}
 with respect to the Hausdorff metric.
 \begin{theorem} \label{th4:approxspechaus}
   Let $A \in C^1_b([0,\infty),\R^{d,d})$. Then there exist constants $C,h_0>0$ such that
     \begin{equation} \label{eq4:specerror}
              d_H(\Sigma_s^{\step}(h),\Sigma_s^{\cont})\le C h, \quad \forall \, 0<h \le h_0,
     \end{equation}
     where $d_H$ denotes the Hausdorff distance.
 \end{theorem}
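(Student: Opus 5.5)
The plan is to derive the Hausdorff estimate directly from the uniform bound of Theorem \ref{th3:unifapproxT}. That bound holds for all $V\in\cG(s,d)$, all $T>0$ and all $h=T/N\le h_0$ with one constant $C$. We work with the angular ranges and use the characterization of angular ranges as sets of accumulation points: Lemma \ref{lem4:accucont} in continuous time, and, for the step version, the analog of Lemma \ref{lem2:accu} for the sequence $(\alpha_s^{\step}(h,Nh,V))_{N\in\N}$.

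First we show $\Sigma_s^{\step}(h)\subseteq \Sigma_s^{\cont}+[-Ch,Ch]$. Let $\theta\in I_s^{\step}(h,V)$ for some $V$. Then there is a sequence $N_k\to\infty$ with $\alpha_s^{\step}(h,N_kh,V)\to\theta$. Set $T_k=N_kh\to\infty$. By Theorem \ref{th3:unifapproxT}, $|\alpha_s^{\cont}(T_k,V)-\alpha_s^{\step}(h,T_k,V)|\le Ch$. The values $\alpha_s^{\cont}(T_k,V)$ are bounded, so a subsequence converges to some $\theta'$. By Lemma \ref{lem4:accucont}, $\theta'\in I_s^{\cont}(V)\subseteq\Sigma_s^{\cont}$, and $|\theta-\theta'|\le Ch$. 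Since the right-hand side is a closed set, taking the union over $V$ and then the closure preserves the inclusion.

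Next we show the converse inclusion $\Sigma_s^{\cont}\subseteq \Sigma_s^{\step}(h)+[-Ch,Ch]$. This is the step needing care, because $T_n\to\infty$ from Lemma \ref{lem4:accucont} need not be a multiple of $h$. Let $\theta\in I_s^{\cont}(V)$ with $\alpha_s^{\cont}(T_n,V)\to\theta$. Put $N_n=\lfloor T_n/h\rfloor$ and $\tilde T_n=N_nh$, so $0\le T_n-\tilde T_n<h$. Since the integrand in \eqref{eq3:defcont} is bounded by $\|A\|_\infty\le C_A$, we get
\begin{equation*}
|\alpha_s^{\cont}(T_n,V)-\alpha_s^{\cont}(\tilde T_n,V)|\le \frac{1}{T_n}\int_{\tilde T_n}^{T_n}C_A\,dt+\Big(\frac{1}{\tilde T_n}-\frac{1}{T_n}\Big)\tilde T_n C_A\le \frac{2C_Ah}{T_n}\to 0 .
\end{equation*}
Hence $\alpha_s^{\cont}(\tilde T_n,V)\to\theta$. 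Theorem \ref{th3:unifapproxT} then gives $|\alpha_s^{\step}(h,N_nh,V)-\alpha_s^{\cont}(\tilde T_n,V)|\le Ch$. The step values are bounded, since $\ang\le\pi/2$ and the proof of Proposition \ref{prop3:estdiffq} gives each step angle as $\mathcal O(h)$. So a subsequence converges to a point $\theta''\in I_s^{\step}(h,V)$ with $|\theta-\theta''|\le Ch$. Taking closures again gives the inclusion.

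The two inclusions together give $d_H\le Ch$ with the constants $C,h_0$ of Theorem \ref{th3:unifapproxT}, provided both sets are nonempty and compact. This holds because both sets are closed and bounded, by the boundedness just used. The main obstacle is the mismatch between continuous times and the grid $h\N$ in the second inclusion. The elementary estimate above, which relies on $A$ being bounded, handles it.
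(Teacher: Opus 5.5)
Your proof is correct and follows essentially the same route as the paper. Both directions rest on the accumulation-point characterizations (Lemma \ref{lem4:accucont} and Lemma \ref{lem2:accu}), the uniform bound of Theorem \ref{th3:unifapproxT}, and the same elementary estimate \eqref{eq4:estalphaT} for the rounding $N=\lfloor T_n/h\rfloor$. The only differences are cosmetic: you extract convergent subsequences and pass to the limit, whereas the paper works with tolerance $h$ beyond some $n_0(h)$ and brackets $\theta$ between $\varliminf$ and $\varlimsup$; your version also keeps the constant of Theorem \ref{th3:unifapproxT} exactly.
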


 \begin{proof} We will show the estimate
   \begin{equation*} \label{eq4:estindiv}
     d_H(I_s^{\step}(h,V),I_s^{\cont}(V)) \le C h, \quad \forall \, 0<h \le h_0, V \in \cG(s,d)
   \end{equation*}
   for the angular ranges from \eqref{eq3:contrange} and \eqref{eq4:limindiv}.
   The estimate \eqref{eq4:specerror} then follows by taking the union over $V \in \cG(s,d)$ and by a subsequent
   closure argument.
   As another preparation we note that $E(t) = (I-P_{\Phi(t,0)V})A(t) P_{\Phi(t,0)V}$ satisfies for $T,\tau >0$ 
   \begin{equation} \label{eq4:estalphaT}
     \begin{aligned}
     \left|\alpha_s^{\cont}(T, V) -\alpha_s^{\cont}(\tau,V)\right| & \le \left| \frac{1}{T} - \frac{1}{\tau} \right|\int_0^T\|E(\zeta)\| d \zeta + \Big|
     \frac{1}{\tau} \int_{T}^{\tau} \|E(\zeta)\| d \zeta \Big|  \\
     & \le \frac{2}{\tau}|T- \tau| \sup_{\zeta\ge 0}\|E(\zeta)\| \le \frac{2 C}{\tau} |T - \tau|.
     \end{aligned}
   \end{equation}
   Consider first $\theta \in I_s^{\cont}(V)$. By Lemma \ref{lem4:accucont} there exists a sequence $T_n \to \infty$ with
   $\theta= \lim_{n \to \infty} \alpha_s^{\cont}(T_n,V) $. For $0 < h \le h_0$ choose $n_0=n_0(h)$ such that
   $|\alpha_s^{\cont}(T_n,V) - \theta| \le h$ for all $n \ge n_0$. Then we set $N(h,n) =
   \lfloor \frac{T_n}{h} \rfloor$ so that $|T_n - h N(h,n)| \le h$ and $N(h,n) \to \infty$ as $n \to \infty$ hold. An
   application of  Theorem \ref{th3:unifapproxT} and of \eqref{eq4:estalphaT} yields for $n \ge n_0$
   \begin{align*}
     | \alpha_s^{\step}(h,h N(h,n),V)- \theta| & \le |\alpha_s^{\step}(h,h N(h,n),V)- \alpha_s^{\cont}(hN(h,n),V) | \\
     & + |\alpha_s^{\cont}(h N(h,n),V)- \alpha_s^{\cont}(T_n,V)| +|\alpha_s^{\cont}(T_n,V)- \theta| \\
     & \le Ch + \frac{2C}{T_n}|T_n-h N(h,n)| + h \le Ch.
   \end{align*}
   Therefore, we obtain the inequalities
   \begin{align*}
     \varliminf_{k \to \infty}\alpha_s^{\step}(h,kh,V) - Ch & \le  \varliminf_{n \to \infty}\alpha_s^{\step}(h,hN(h,n),V) - Ch \le \theta \\
     & \le \varlimsup_{n \to \infty}\alpha_s^{\step}(h,hN(h,n),V)+Ch  \le \varlimsup_{k \to \infty} \alpha_s^{\step}(h,kh,V) + Ch.
   \end{align*}
   Thus, we have shown $\mathrm{dist}(I_s^{\cont},I_s^{\step}(h)) \le Ch $ for the Hausdorff semidistance.

   The proof of the converse estimate is somewhat analogous. For $\theta \in I_s^{\step}(h)$, Lemma \ref{lem2:accu}
  yields a sequence $N(h,n) \to \infty$ with $\lim_{n \to \infty}\alpha_s^{\step}(h,h N(h,n),V)=\theta$.
   Then we choose $n_0=n_0(h)$ with $|\alpha_s^{\step}(h,hN(h,n),V)- \theta | \le h$ for all $n \ge n_0$.
  Thus  Theorem \ref{th3:unifapproxT} leads for $n \ge n_0$ to the estimate
   \begin{align*}
     |\theta - \alpha_s^{\cont}(hN(h,n),V)| & \le |\theta - \alpha_s^{\step}(h,hN(h,n),V)| \\
     & + |\alpha_s^{\step}(h,hN(h,n),V) - \alpha_s^{\cont}(hN(h,n),V)| \le h + C h.
   \end{align*}
   As above, we obtain
   \begin{align*} \varliminf_{T \to \infty} \alpha_s^{\cont}(T,V) - Ch &\le \varliminf_{n \to \infty}\alpha_s^{\cont}(hN(h,n),V) - Ch
     \le \theta \\
     & \le \varlimsup_{n \to \infty} \alpha_s^{\cont}(hN(h,n),V) + Ch
       \le \varlimsup_{T \to \infty} \alpha_s^{\cont}(T,V) + Ch. 
   \end{align*}
   Thus, we conclude $\mathrm{dist}(I_s^{\step}(h),I_s^{\cont}) \le Ch$, which finishes our proof.
   \end{proof}
 \begin{remark}
   \label{rem3:x0}Recall the dependence of the linear system \eqref{eq3:contvari} on the data $x _0 \in \cB$.
   If $A(\cdot,x_0)$ and its first derivative $\dot{A}(\cdot,x_0)$ have bounds that are uniform for
   $t \ge 0$ and  $x_0\in \cB$,
   then the estimates in Theorems \ref{th3:unifapproxT} and \ref{th4:approxspechaus} hold
   uniformly for $x_0 \in \cB$.
   \end{remark}


\subsection{A simplified algorithm for the continuous-time case}
 
Recall the continuous-time system
\begin{equation*}\label{ODE1}
\dot x(t) = f(t,x(t)),\quad t \in \R 
\end{equation*}
and its solution operator $\Psi(t_1,t_0,x_0)$ from \eqref{eq3:contnonlin}.
For sufficiently small $h>0$, we use the $h$-step map
$\Psi(nh, (n-1)h; \cdot)$ in place of $F_{n-1}$ of the previous algorithms.
The following simple algorithm uses only points on the trajectory.
\begin{algorithm}
\caption{A simplified algorithm for continuous-time
  systems in case $s=1$\label{alg3}}
  \textcolor{header1}{
  \begin{algorithmic}[1] 
\For {$\bi\in\cL^d$}
\State Choose the midpoint $x_0 \in \cB_{\bi}$
  \For {$n = 1,\dots,N+1$}
  \State $x_{n} = \Psi(nh,(n-1)h, x_{n-1})$
  \EndFor
  \State $\Gamma_{\bi} =\displaystyle \frac 1{h N} \sum_{\ell = 1}^N \ang(x_{\ell-1}-
  x_{\ell}, x_{\ell} - x_{\ell+1})$
  \State Color the box $\cB_{\bi}$ with the color
  that corresponds to the angle $\Gamma_{\bi}$  
  \EndFor
  \end{algorithmic}} 
\end{algorithm}                              

In Section \ref{sec5.6} we will prove that, for autonomous systems under
suitable assumptions, the resulting angular range is close to the
angular range associated with the flow direction. 

When applying Algorithm \ref{alg3} to the ODE \eqref{strom}, we compute
the differences using a continuous lift of the first coordinate to
$\R$. 
Figure \ref{Fstrom3} shows the resulting angular values along the flow
direction and the corresponding histogram in the left panels.
The angular map looks similar to the upper-right diagram in Figure
\ref{Fstrom1}, where we compute the maximum angular spectral values by
solving an optimization problem. The right panel of Figure
\ref{Fstrom3} gives the pointwise absolute difference between the
maximum angular spectral values and the angular values along the flow
direction. 
 
\begin{figure}[hbt]
\begin{center}
      \includegraphics[width=0.99\textwidth]{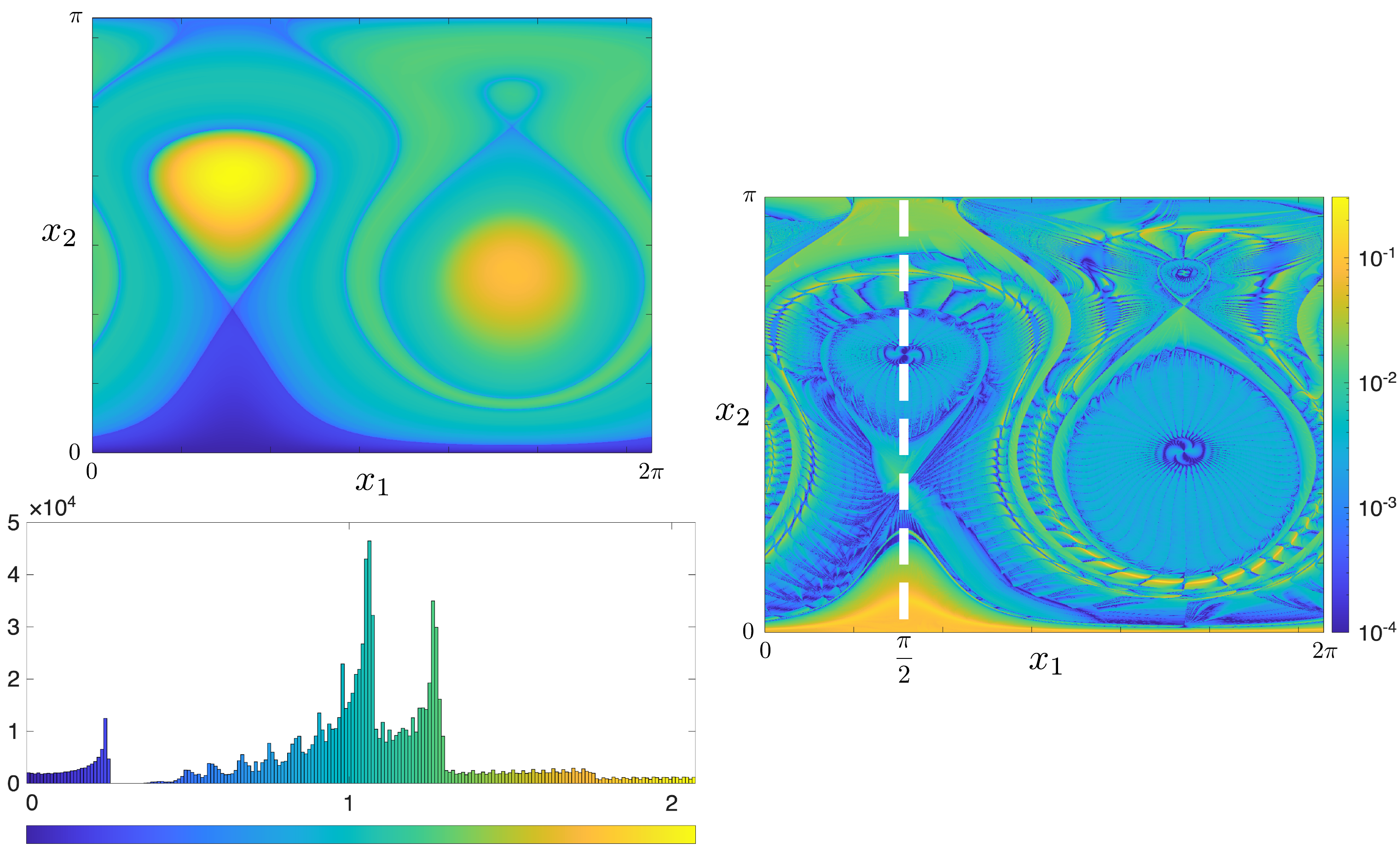}
\end{center}
\caption{Left panels: Angular map of the flow direction for \eqref{strom}
and the corresponding histogram. The right panel shows, on a
logarithmic color scale, the pointwise absolute difference between the
angular map in the upper-left panel and the maximum angular spectral
values shown in the upper-right panel of Figure
\ref{Fstrom1}. The white dashed line marks the section $x_1=\frac \pi 2$
investigated in Figure \ref{Fstrom4}.
\label{Fstrom3}}
\end{figure}

To examine these differences more closely, we consider initial points
$x_0=(\frac \pi 2,x_2)^\top$, $x_2\in[0,\pi]$, along the white dashed line in
Figure \ref{Fstrom3}. We compute the minimal and maximal angular values and
compare them with the values obtained from Algorithm \ref{alg3}, using
$T=5000$ and $h=\frac 1{20}$. Each step is evaluated by five Runge--Kutta
steps of size $\frac{1}{100}$. Starting from a uniform grid in $x_2$, we
locally maximize the difference between the maximal and minimal
angular values with respect to $x_2$.
The results in Figure \ref{Fstrom4} reveal three pronounced peaks in this
difference. Near $x_2=0.236$, the flow direction yields approximately
the minimal angular value, whereas near $x_2=2.6$ it yields
approximately the maximal value. Near $x_2=0.733$, its angular value
lies strictly between the two extrema.
Thus, the mean rotation along the flow direction does not capture
the full range of mean rotations of infinitesimal perturbations.
The angular maps identify regions where this additional variation
occurs. A detailed analysis of the associated periodic orbits will
be given in a subsequent paper.

\begin{figure}[hbt]
\begin{center}
      \includegraphics[width=0.80\textwidth]{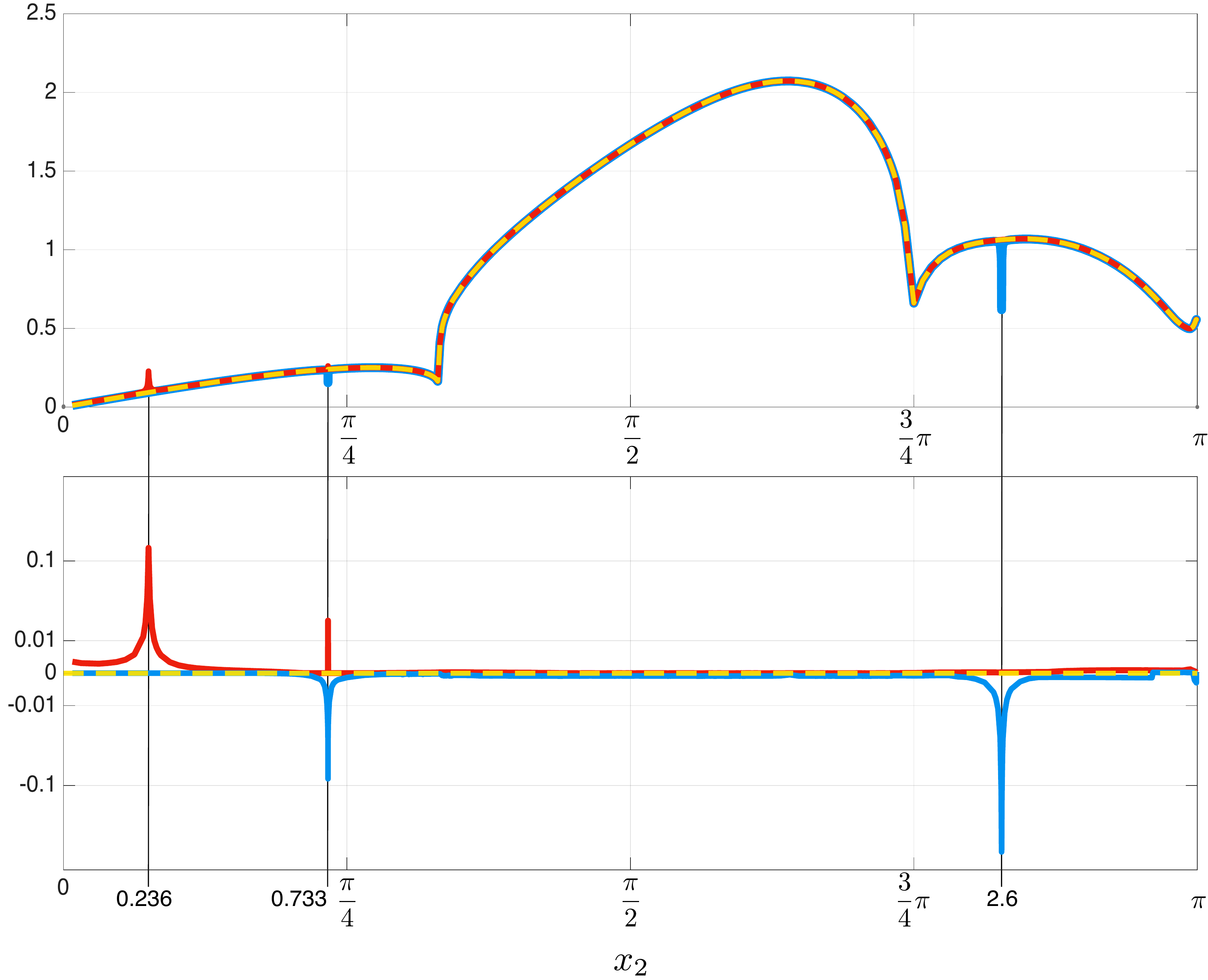}
\end{center}
\caption{Angular values for initial points $(\frac \pi 2,x_2)^\top$ 
along the white dashed
line in Figure~\ref{Fstrom3}, with $T=5000$ and $h=\frac{1}{20}$.
Top: minimal (blue) and maximal (red) angular values, compared with
the values obtained from Algorithm \ref{alg3} (yellow dashed).
Bottom: the same values after subtracting the angular value of the
flow direction, which therefore corresponds to zero.
The vertical axis uses a symmetric logarithmic scale with an
approximately linear region near zero, with transition scale $10^{-2}$. 
\label{Fstrom4}}
\end{figure}

Figure \ref{FL3} presents an application of Algorithm \ref{alg3} to
the three-dimensional Lorenz system \eqref{lorenz} with step size
$h=\frac 1{20}$ and $N=2000$ on a grid of size $L^3$, where $L=100$. 

\begin{figure}[hbt]
\begin{center}
      \includegraphics[width=0.99\textwidth]{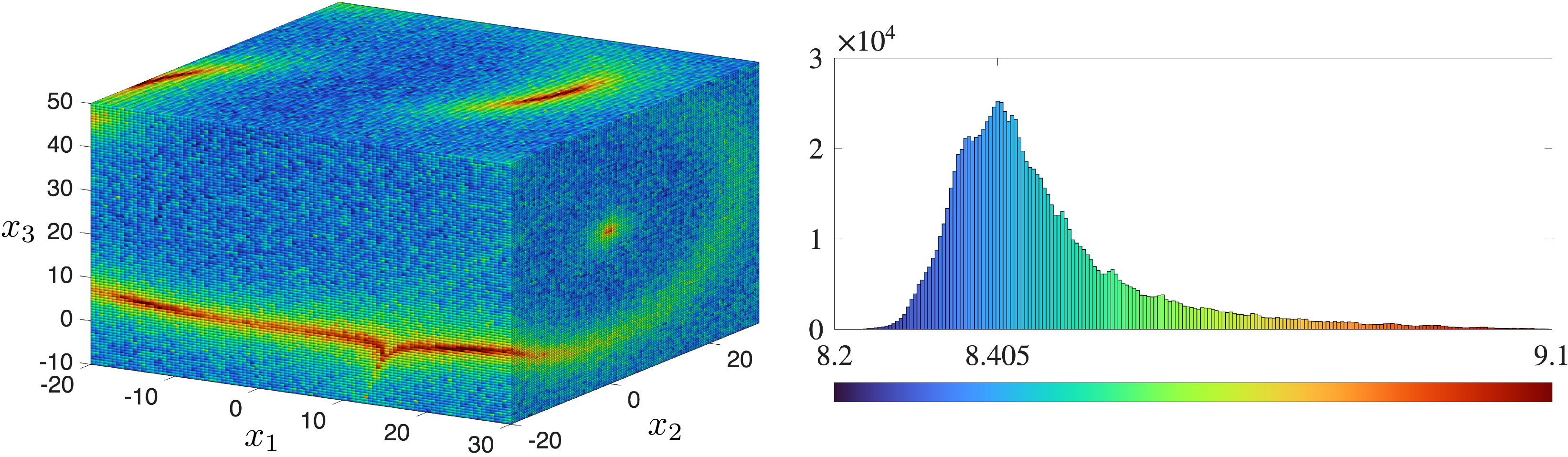}
\end{center}
\caption{Angular map of the flow direction for
  \eqref{lorenz} with $N = 2000$ and $h = \frac 1{20}$ (left) and 
  the corresponding histogram (right).  
\label{FL3}}
\end{figure}

For the orbit segments considered here, the numerical computations in
Section \ref{sec_lorenz} yield three separated finite-time spectral
intervals, with the second interval containing $1$. The corresponding
computed spectral fiber $\cW_n^2$ is associated with the flow
direction. Thus, in the left histogram of Figure \ref{FL2}, the peak
near $8.4$ corresponds to the flow direction. Accordingly, the
simplified approach of Algorithm \ref{alg3} produces a peak at
approximately the same value in Figure \ref{FL3}. 
 
 \subsection{Convergence analysis of  Algorithm \ref{alg3}}
 \label{sec5.6}
 In this section we consider Algorithm \ref{alg3} with step-size $h$ for an autonomous system
 \begin{equation} \label{eq5:ODE2}
   \dot{x}(t)=f(x(t)), \quad t \in \R, \quad x(0)=x_0.
   \end{equation} 
 Algorithm \ref{alg3} aims at approximating the time average $\alpha_1^{\mathrm{cont}}(T,V_0)$ from
 \eqref{eq3:defcont} for the special choice $V_0=\mathrm{span}(\dot{x}(0))$ by
   \begin{equation} \label{eq5:alpha1V0}
     \alpha_1^{\mathrm{diff}}(h,T,V_0)=\frac{1}{T}\sum_{j=1}^N \ang(x((j-1)h)-x(jh),x(jh)-x((j+1)h)),
     \quad h=\frac{T}{N}.
   \end{equation}
   Here and in what follows we suppress the dependence of the solution $x(t)=x(t;x_0)$ on the
   initial value. In the autonomous case, the space $V_0$ is mapped by the solution operator
   $\Phi(\tau,0)$ of the variational equation \eqref{eq3:contvari} to
   \begin{equation} \label{eq5:vareqtau}
     V_{\tau}= \mathrm{span}( \dot{x}(\tau)), \quad \dot{x}(\tau)=\Phi(\tau,0) \dot{x}(0).
   \end{equation}
   As above, we do not discuss the error caused by the approximation of the nonlinear flow
   $\Psi(\tau,0)$ in \eqref{eq5:alpha1V0} and treat only the error induced by taking differences
   along the solution path.
   The analog of Proposition \ref{prop3:estdiffq} is the following.
   \begin{proposition} \label{prop5:estdiff}
     Let $f \in C^2(\R^d,\R^d)$ and let $x(t)$, $t \ge 0$, be a nonconstant bounded solution of \eqref{eq5:ODE2}.
     Then there exist constants $C,h_0>0$ such that
     \begin{equation} \label{eq5:esttau}
       |\ang(\dot{x}(\tau -h), \dot{x}(\tau))- \ang(x(\tau-h)-x(\tau),x(\tau)-x(\tau+h))| \le C h^2
     \end{equation}
     holds for all $0<h \le h_0$ and for all  $\tau\ge h$. 
   \end{proposition}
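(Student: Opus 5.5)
The plan is to compare both angles in \eqref{eq5:esttau} with the angle between the tangent vectors at the two \emph{midpoints}, $\dot x(\tau-\tfrac h2)$ and $\dot x(\tau+\tfrac h2)$. We use the triangle inequality for the metric $\ang$ on $\cG(1,d)$ and split
\begin{align*}
&|\ang(\dot x(\tau-h),\dot x(\tau))-\ang(x(\tau-h)-x(\tau),x(\tau)-x(\tau+h))| \\
&\quad\le |\ang(\dot x(\tau-h),\dot x(\tau))-\ang(\dot x(\tau-\tfrac h2),\dot x(\tau+\tfrac h2))| \\
&\qquad + \ang(\dot x(\tau-\tfrac h2),x(\tau)-x(\tau-h)) + \ang(\dot x(\tau+\tfrac h2),x(\tau+h)-x(\tau)).
\end{align*}
The aim is to show that each of the three terms is $\mathcal O(h^2)$, uniformly in $\tau\ge h$.

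\textbf{Setup.} Since $x$ is bounded and $f\in C^2$, the matrix $A(t)=Df(x(t))$ lies in $C^1_b([0,\infty),\R^{d,d})$, because $\dot A(t)=D^2f(x(t))[f(x(t)),\cdot]$. By \eqref{eq5:vareqtau}, $\dot x(t)=\Phi(t,0)\dot x(0)$. Uniqueness for \eqref{eq5:ODE2} gives $\dot x(t)\neq 0$ for all $t$, since the solution is nonconstant. By Gronwall we have the two-sided bound
\[
e^{-C|s|}\|\dot x(t)\|\le\|\dot x(t+s)\|\le e^{C|s|}\|\dot x(t)\|.
\]
This is essential, because $\|\dot x\|$ need not be bounded below on $[0,\infty)$ (the solution may approach an equilibrium). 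All estimates must therefore be \emph{relative} to $\|\dot x(\tau)\|$.

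\textbf{First term.} Apply Proposition \ref{prop3:estdiffq} with $V=V_\sigma=\Span(\dot x(\sigma))$ and $\Phi(\sigma+h,\sigma)V_\sigma=V_{\sigma+h}$. This gives $\ang(V_\sigma,V_{\sigma+h})=h\|E(\sigma)\|+\mathcal O(h^2)$ with $E(\sigma)=(I-P_{V_\sigma})A(\sigma)P_{V_\sigma}$. Use it once with $\sigma=\tau-h$ and once with $\sigma=\tau-\tfrac h2$. By the Lipschitz estimate for $E$ from the proof of Theorem \ref{th3:unifapproxT}, which rests on \eqref{eq3:projhest} and the Lipschitz continuity of $A$, we get $\|E(\tau-h)-E(\tau-\tfrac h2)\|\le Ch$. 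Hence the first term is $\mathcal O(h^2)$.

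\textbf{Secant terms.} A midpoint Taylor expansion gives
\[
x(\tau)-x(\tau-h)=h\dot x(\tau-\tfrac h2)+R,\qquad \|R\|\le \tfrac{h^3}{24}\sup_{|t-\tau|\le h}\|\dddot x(t)\|.
\]
Here $\dddot x=D^2f(x)[\dot x,\dot x]+Df(x)^2\dot x$. Since $x$ and $\dot x$ are bounded, this yields $\|\dddot x(t)\|\le C\|\dot x(t)\|\le C'\|\dot x(\tau)\|$ on the window, by the Gronwall bound above. So the remainder is small relative to the main term:
\[
\|R\|\le Ch^3\|\dot x(\tau-\tfrac h2)\|.
\]
Then \eqref{eq2:chardist} gives
\[
\sin\ang(a,a+R)\le \|R\|/\|a\|=\mathcal O(h^2)\quad\text{for } a=h\dot x(\tau-\tfrac h2),
\]
and $x\le\tfrac{\pi}{2}\sin x$ converts this into a bound on the angle itself. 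The term at $\tau+\tfrac h2$ is handled identically.

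The main obstacle is exactly this uniformity issue. Without a positive lower bound on $\|\dot x\|$, absolute $\mathcal O(h^3)$ remainders would not give angle errors of order $h^2$. The step I will check most carefully is the relative bound $\|\dddot x\|\le C\|\dot x\|$ on windows of length $2h$. It works because $\dddot x$ is linear in $\dot x$ up to a bounded factor, $D^2f(x)[\dot x,\cdot]$ being bounded since $x$ is bounded. Combined with the Gronwall comparability of $\|\dot x\|$ at nearby times, this handles it.
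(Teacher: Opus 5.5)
Your proof is correct, but it is organized differently from the paper's.

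\textbf{The paper's route.} The paper works entirely with projectors. Via \eqref{eq2:charsin} it bounds the difference of the two sines by
$\|P_{\dot x(\tau-h)}-P_{\dot x(\tau)}-P_{x(\tau-h)-x(\tau)}+P_{x(\tau)-x(\tau+h)}\|$.
It then expands all four projectors to first order around $P_{\dot x(\tau)}$, using the relative Taylor expansions \eqref{eq5:expand}, and checks by explicit computation that the $\cO(h)$ terms cancel. Finally it passes from sines to angles through the Lipschitz continuity of $\arcsin$ near $0$.

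\textbf{Your route.} You insert the midpoint tangents $\dot x(\tau\mp\tfrac h2)$ and use the triangle inequality for the metric $\ang$ on $\cG(1,d)$, so the cancellation never has to be computed by hand. Second order then comes from two separate facts:
\begin{itemize}
\item The secant $x(\tau)-x(\tau-h)$ is the midpoint-rule approximation of $h\dot x(\tau-\tfrac h2)$, with relative error $\cO(h^2)$.
\item The half-step time shift costs only $\cO(h^2)$. This holds because Proposition \ref{prop3:estdiffq} makes each angular increment $h\|E(\sigma)\|+\cO(h^2)$, and $\|E\|$ is Lipschitz by \eqref{eq3:projhest}.
\end{itemize}

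\textbf{Trade-offs.} In your version, the comparison of the two tangent-angle increments needs no bounds relative to $\|\dot x\|$, since Proposition \ref{prop3:estdiffq} and \eqref{eq3:projhest} hold uniformly over all subspaces. You correctly checked that $A=Df(x(\cdot))\in C^1_b$, which is what makes those results applicable. The possible decay of $\|\dot x(\tau)\|$ enters only in the elementary secant estimate. There your bound $\|\dddot x\|\le C\|\dot x\|$, together with Gronwall comparability of $\|\dot x\|$ at nearby times, is the same ingredient the paper uses. The same bound also shows the secants are nonzero for small $h$, so the angles are well defined. Overall, your proof is more modular and explains why the estimate is second order. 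The paper's proof is self-contained and independent of Proposition \ref{prop3:estdiffq} and the proof of Theorem \ref{th3:unifapproxT}, at the price of a more laborious expansion.
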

   \begin{proof}
      In the following we write $P_v=P_{\mathrm{span}(v)}$ for the orthogonal projector onto a
     one-dimensional subspace. From  \eqref{eq2:charsin} we obtain
     \begin{equation} \label{eq5:sinest}
       \begin{aligned}
     & \big| \sin(\ang(\dot{x}(\tau -h), \dot{x}(\tau)))- \sin(\ang(x(\tau-h)-x(\tau),x(\tau)-x(\tau+h)))\big| \\
         & = \big| \|P_{\dot{x}(\tau-h)}-P_{\dot{x}(\tau)}\| - \| P_{x(\tau-h)-x(\tau)}-P_{x(\tau)-x(\tau+h)}\| \big| \\
         & \le  \|P_{\dot{x}(\tau-h)}-P_{\dot{x}(\tau)} - P_{x(\tau-h)-x(\tau)}+P_{x(\tau)-x(\tau+h)}\| .
       \end{aligned}
     \end{equation}
     Due to our assumptions we have that $\dot{x}(\tau)=f(x(\tau)) \neq 0$ is bounded for $\tau\ge0$,
    and so are the derivatives $\ddot{x}=Df(x)\dot{x}$ and $\dddot{x}=D^2f(x)(\dot{x},\dot{x})
     + Df(x) \ddot{x}$. Moreover, by a Gronwall estimate on an interval of length $1$ we find a constant $C>0$ such that
     \begin{equation*}
       \| \ddot{x}(t)\| + \|\dddot{x}(t)\| \le C \|\dot{x}(\tau)\| \quad \text{for all} \quad \tau,t\ge 0 \text{ with } |\tau-t|\le 1.
     \end{equation*}
          Therefore, the following Taylor expansions hold with remainders that are uniform for
     $0<h \le 1$ and  $0<h \le \tau < \infty$:
     \begin{equation} \label{eq5:expand}
       \begin{aligned}
         \dot{x}(\tau-h)& = \dot{x}(\tau) - h \ddot{x}(\tau) + \cO(h^2\|\dot{x}(\tau)\|),\\
         x(\tau+h)& = x(\tau)+ h \dot{x}(\tau) + \frac{1}{2}h^2 \ddot{x}(\tau) + \cO(h^3\|\dot{x}(\tau)\|),\\
         x(\tau-h)& =x(\tau)-h \dot{x}(\tau) + \frac{1}{2}h^2 \ddot{x}(\tau) + \cO(h^3\|\dot{x}(\tau)\|).
       \end{aligned}
     \end{equation}
     Note that we will not assume any lower bound for $\|\dot{x}(\tau)\|$.
        Using \eqref{eq5:expand}, we expand the first term on the righthand side as in 
    \eqref{eq5:expandnextV}:
    \begin{equation*}
      \begin{aligned}
        \|\dot{x}(\tau-h)\|^2& = \|\dot{x}(\tau)\|^2 - 2 h \ddot{x}(\tau)^{\top}\dot{x}(\tau) +
        \cO(h^2\|\dot{x}(\tau)\|^2),\\
        \|\dot{x}(\tau-h)\|^{-2}& = \|\dot{x}(\tau)\|^{-2} + 2 h \ddot{x}(\tau)^{\top}\dot{x}(\tau)
        \|\dot{x}(\tau)\|^{-4}+ \cO(h^2\|\dot{x}(\tau)\|^{-2}),\\
        P_{\dot{x}(\tau-h)}& =\|\dot{x}(\tau-h)\|^{-2} \dot{x}(\tau-h)\dot{x}(\tau-h)^{\top}= P_{\dot{x}(\tau)}\\
        & - h \|\dot{x}(\tau)\|^{-2} \Big[ \ddot{x}(\tau)\dot{x}(\tau)^{\top}+ \dot{x}(\tau) \ddot{x}(\tau)^{\top} 
         -2 \ddot{x}(\tau)^{\top}\dot{x}(\tau) P_{\dot{x}(\tau)} \Big] + \cO(h^2).
      \end{aligned}
      \end{equation*}
      In a similar fashion, we find from \eqref{eq5:expand}
            \begin{equation*}
        \begin{aligned}
          \|x(\tau \pm h)-x(\tau)\|^{-2}& =h^{-2} \|\dot{x}(\tau)\|^{-2}\left(1\mp h \|\dot{x}(\tau)\|^{-2}
          \ddot{x}(\tau)^{\top}\dot{x}(\tau) +\cO(h^2) \right), \\
          P_{x(\tau \pm h)-x(\tau)}& = P_{\dot{x}(\tau)}\pm h \|\dot{x}(\tau)\|^{-2} \Big[ \frac{1}{2}
            (\ddot{x}(\tau)\dot{x}(\tau)^{\top}+ \dot{x}(\tau)\ddot{x}(\tau)^{\top}) \Big. \\
            & \Big. \qquad \qquad \qquad \qquad \quad - \ddot{x}(\tau)^{\top}
            \dot{x}(\tau) P_{\dot{x}(\tau)} \Big] +\cO(h^2).
          \end{aligned}
            \end{equation*}
            Inserting both expansions into  \eqref{eq5:sinest}, we obtain that the righthand side is
            $\cO(h^2)$.  Finally, using the Lipschitz bound for $\arcsin$ near zero, we find  
            that the difference of the angles is $\cO(h^2)$, as claimed in \eqref{eq5:esttau}.
   \end{proof}

   As a consequence of Proposition \ref{prop5:estdiff} we obtain an estimate for the approximate
   values \eqref{eq5:alpha1V0} of the angular spectrum.

   \begin{theorem} \label{thm5:unifalpha1}
     Let the assumptions of Proposition \ref{prop5:estdiff} hold. Then there exist constants
     $C,h_0>0$ such that the values $\alpha_1^{\mathrm{diff}}(h,T,V_0)$ with initial subspace
     $V_0=\mathrm{span}(\dot{x}(0))$ satisfy
     \begin{equation} \label{eq5:unifalpha1}
      | \alpha_1^{\mathrm{cont}}(T,V_0)- \alpha_1^{\mathrm{diff}}(h,T,V_0)| \le Ch
     \end{equation}
     for all $T>0$ and $h=\frac{T}{N} \le h_0$.
   \end{theorem}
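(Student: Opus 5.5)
We split the error into two parts using the intermediate quantity $\alpha_1^{\step}(h,T,V_0)$ from \eqref{eq3:limtheta}:
\begin{equation*}
|\alpha_1^{\cont}(T,V_0)-\alpha_1^{\mathrm{diff}}(h,T,V_0)| \le |\alpha_1^{\cont}(T,V_0)-\alpha_1^{\step}(h,T,V_0)| + |\alpha_1^{\step}(h,T,V_0)-\alpha_1^{\mathrm{diff}}(h,T,V_0)|.
\end{equation*}
The first term is bounded by Theorem \ref{th3:unifapproxT}, applied with $s=1$ to the variational equation \eqref{eq3:contvari}. Its hypothesis $A\in C^1_b([0,\infty),\R^{d,d})$ holds because $x(t)$ is bounded, $f\in C^2$, $A(t)=Df(x(t))$, and $\dot A(t)=D^2f(x(t))\dot x(t)$ with $\dot x=f(x)$ bounded. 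Since the closure of the bounded trajectory is compact, $Df$ and $D^2f$ are bounded along it. This gives a bound $Ch$ for all $T>0$ and $h=T/N\le h_0$.

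For the second term we use \eqref{eq5:vareqtau}, which says $\Phi(jh,0)V_0=\mathrm{span}(\dot x(jh))$. Hence
\begin{equation*}
\alpha_1^{\step}(h,T,V_0)=\frac{1}{T}\sum_{j=1}^N \ang(\dot x((j-1)h),\dot x(jh)).
\end{equation*}
We compare this term by term with \eqref{eq5:alpha1V0}. Proposition \ref{prop5:estdiff} with $\tau=jh\ge h$ gives
\begin{equation*}
|\ang(\dot x((j-1)h),\dot x(jh))-\ang(x((j-1)h)-x(jh),x(jh)-x((j+1)h))|\le Ch^2,
\end{equation*}
uniformly in $j$. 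Summing over $j=1,\dots,N$ and dividing by $T=Nh$ yields $\frac{1}{Nh}\,N\,Ch^2=Ch$. Adding the two bounds and taking $h_0$ as the minimum of the two thresholds proves \eqref{eq5:unifalpha1}.

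The main point to verify is the identification of the flow-direction iterates. Differentiating $\dot x=f(x)$ gives $\ddot x=Df(x)\dot x=A(t)\dot x$, so $\dot x(t)$ solves the variational equation. Since $x$ is nonconstant, $\dot x(t)\neq0$ for all $t$ by uniqueness of solutions, so every $V_\tau$ is a genuine one-dimensional subspace. Both results we invoke have constants uniform in $\tau\ge0$, so no lower bound on $\|\dot x\|$ is needed. One small remaining check is that Algorithm \ref{alg3} uses the points up to $x((N+1)h)$, which is harmless since Proposition \ref{prop5:estdiff} allows every $\tau\ge h$.
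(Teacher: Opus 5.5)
Your proposal is correct and follows the paper's proof essentially verbatim. You reduce to $\alpha_1^{\step}$ via Theorem \ref{th3:unifapproxT}, identify $\Phi(jh,0)V_0=\Span(\dot x(jh))$ through \eqref{eq5:vareqtau}, and sum the $\cO(h^2)$ bounds of Proposition \ref{prop5:estdiff} at $\tau=jh$; your explicit checks that $A(t)=Df(x(t))\in C^1_b([0,\infty),\R^{d,d})$ and that $\dot x(t)\neq 0$ are details the paper leaves implicit.
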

   \begin{proof} By Theorem \ref{th3:unifapproxT} it is enough to prove the estimate \eqref{eq5:unifalpha1}
     for $\alpha_1^{\mathrm{step}}$ rather than $\alpha_1^{\mathrm{cont}}$. Such an estimate follows
     immediately from \eqref{eq5:vareqtau} and Proposition \ref{prop5:estdiff}
        \begin{equation*}
       \begin{aligned}
         &  |\alpha_1^{\mathrm{diff}}(h,T,V_0)- \alpha_1^{\mathrm{step}}(h,T,V_0)|  \\
           & = \frac{1}{hN} \Big| \sum_{j=1}^N \ang(x((j-1)h)-x(jh),x(jh)-x((j+1)h)) - \ang(\dot{x}((j-1)h),
         \dot{x}(jh)) \Big|\\
         & \le \frac{1}{hN} \sum_{j=1}^N C h^2 = Ch.
       \end{aligned}
     \end{equation*}
   This finishes the proof.  
          \end{proof}

   As in \eqref{eq4:limindiv} let us introduce the angular range generated by the subspace
   $V_0=\mathrm{span}(\dot{x}(0))$  for a  fixed stepsize $h>0$:
 \begin{equation*} \label{eq4:limdiff}
         I_1^{\mathrm{diff}}(h,V_0) =
         \left[\varliminf_{N\to \infty} \alpha_1^{\mathrm{diff}}(h,Nh,V_0) ,
           \varlimsup_{N\to \infty}\alpha_1^{\mathrm{diff}}(h,Nh,V_0) \right].
  \end{equation*}
 We conclude with the convergence of
 the angular ranges with respect to the Hausdorff metric. The proof uses the same arguments
 as in Theorem \ref{th4:approxspechaus}, except  that we cannot proceed from
 the angular range to the angular spectrum since we consider only  the
 subspace spanned by the flow direction.

 \begin{theorem} \label{th5:hausrangediff}
   Let the assumptions of Proposition \ref{prop5:estdiff} hold. Then there exist constants $C,h_0>0$ such that
     \begin{equation*} \label{eq4:specHausdorff}
              d_H(I_1^{\mathrm{diff}}(h, V_0),I_1^{\cont}(V_0))\le C h, \quad \forall \, 0<h \le h_0
     \end{equation*}
    holds for  the Hausdorff distance $d_H$.
 \end{theorem}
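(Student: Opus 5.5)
The plan is to reuse the proof of Theorem \ref{th4:approxspechaus} almost verbatim, with Theorem \ref{thm5:unifalpha1} in the role of Theorem \ref{th3:unifapproxT}. Fix $V_0=\mathrm{span}(\dot{x}(0))$. By Theorem \ref{thm5:unifalpha1} there are $C,h_0>0$ with $|\alpha_1^{\cont}(T,V_0)-\alpha_1^{\mathrm{diff}}(h,T,V_0)|\le Ch$ for all $T=Nh$, $h\le h_0$. Boundedness of $A(t)=Df(x(t))$ follows from the boundedness of the solution and $f\in C^2$. Hence the integrand $\|E(\zeta)\|$ is bounded, and \eqref{eq4:estalphaT} holds for the flow direction as well: $|\alpha_1^{\cont}(T,V_0)-\alpha_1^{\cont}(\tau,V_0)|\le \frac{2C}{\tau}|T-\tau|$.

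First I show $\mathrm{dist}(I_1^{\cont}(V_0),I_1^{\mathrm{diff}}(h,V_0))\le Ch$. Take $\theta\in I_1^{\cont}(V_0)$. Lemma \ref{lem4:accucont} gives $T_n\to\infty$ with $\alpha_1^{\cont}(T_n,V_0)\to\theta$. Set $N(h,n)=\lfloor T_n/h\rfloor$, so that $|T_n-hN(h,n)|\le h$. The triangle inequality combines three bounds: $|\alpha_1^{\mathrm{diff}}(h,hN,V_0)-\alpha_1^{\cont}(hN,V_0)|\le Ch$ from Theorem \ref{thm5:unifalpha1}, the $T$-Lipschitz estimate above (which gives $2Ch/T_n$), and $|\alpha_1^{\cont}(T_n,V_0)-\theta|\le h$ for $n$ large. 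Together they give $|\alpha_1^{\mathrm{diff}}(h,hN(h,n),V_0)-\theta|\le Ch$ for large $n$. Taking $\varliminf$ and $\varlimsup$ along this subsequence and comparing with those over all $N$ places $\theta$ in the $Ch$-neighborhood of $I_1^{\mathrm{diff}}(h,V_0)$.

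Conversely, let $\theta\in I_1^{\mathrm{diff}}(h,V_0)$. The discrete analogue of Lemma \ref{lem2:accu} (accumulation points of a bounded sequence) provides $N(h,n)\to\infty$ with $\alpha_1^{\mathrm{diff}}(h,hN(h,n),V_0)\to\theta$. To justify it, I would note that consecutive terms differ by $\mathcal{O}(1/N)$, because each summand $\frac1h\ang(\cdot,\cdot)$ is uniformly bounded by Proposition \ref{prop5:estdiff} together with Proposition \ref{prop3:estdiffq}. Theorem \ref{thm5:unifalpha1} then gives $|\theta-\alpha_1^{\cont}(hN(h,n),V_0)|\le (1+C)h$ for large $n$. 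Passing to $\varliminf/\varlimsup$ over $T$ yields $\theta\in[\varliminf_T\alpha_1^{\cont}-Ch,\ \varlimsup_T\alpha_1^{\cont}+Ch]$, so the second semidistance is at most $Ch$.

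The main obstacle is this justification of the accumulation-point characterization for the new sequence $\alpha_1^{\mathrm{diff}}(h,Nh,V_0)$. It is not literally of the form \eqref{def:alpha}, so Lemma \ref{lem2:accu} cannot be quoted directly. The uniform boundedness of $\frac1h$ times each angle, coming from \eqref{eq5:esttau} and \eqref{eq3:estdiffq}, is what I would use to supply the slowly varying increments needed for the intermediate-value-type argument. Apart from that, no union over subspaces or closure step is needed, since only the single subspace $V_0$ is involved.
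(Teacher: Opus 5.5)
Your proposal is correct and follows the paper's route. The paper proves this theorem by repeating the argument of Theorem \ref{th4:approxspechaus}, with Theorem \ref{thm5:unifalpha1} in place of Theorem \ref{th3:unifapproxT} and without the union/closure step, exactly as you do. Your extra justification of the accumulation-point property is valid but heavier than needed: for fixed $h$, the trivial bound $\ang\le\frac{\pi}{2}$ already shows that consecutive terms of $\alpha_1^{\mathrm{diff}}(h,Nh,V_0)$ differ by at most $\frac{\pi}{2h(N+1)}$.
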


\section*{Use of AI tools declaration}
During the preparation of this work, the authors used ChatGPT (OpenAI)
to assist with language editing, the critical examination of
mathematical arguments, and the development of MATLAB code for
numerical experiments. The authors reviewed and verified the resulting
material and take full responsibility for the content of this paper.

\section*{Acknowledgments}
Both authors are grateful to the Research Centre for Mathematical
Modelling ($\text{RCM}^2$) at Bielefeld University for continuous
support of their joint research. 
The work of WJB was funded by the Deutsche Forschungsgemeinschaft
(DFG, German Research Foundation) – SFB 1283/2 2021 – 317210226. 
TH thanks the Faculty of Mathematics at Bielefeld
University for further support.
 

\bibliographystyle{abbrv}

\end{document}